\documentclass[12pt]{amsart}
\usepackage{graphicx}
\usepackage[dvips]{epsfig}
\usepackage{pinlabel}
\usepackage{amsmath}
\usepackage{amsfonts}
\usepackage{latexsym}
\usepackage{amssymb}
\usepackage[usenames,dvipsnames]{xcolor}
\usepackage{amsthm}
\usepackage[all]{xypic}
\usepackage{enumitem}

\input xy
\xyoption{all}

\numberwithin{equation}{section}

% for |_| symbol
\newsavebox{\textvisiblespacebox}
\savebox{\textvisiblespacebox}{\texttt{aa}}
\newcommand\vartextvisiblespace[1][\wd\textvisiblespacebox]{%
  \makebox[#1]{\kern.1em\rule{.4pt}{.3ex}%
  \hrulefill%
  \rule{.4pt}{.3ex}\kern.1em}%
}
\newcommand{\x}{\vartextvisiblespace}% For ease-of-use

\theoremstyle{remark} \theoremstyle{definition}

\newtheorem{defn}{Definition}[section]
% defn is definition
\newtheorem{Rem}[defn]{Remark}

\theoremstyle{plain}
\newtheorem{Thm}[defn]{Theorem}
\newtheorem{Prop}[defn]{Proposition}
\newtheorem{Lem}[defn]{Lemma}

\def\co{\colon\thinspace}

\newcommand{\OP}{\operatorname}

\DeclareMathAlphabet{\mathdj}{U}{msb}{m}{n}

\newcommand{\id}{\operatorname{Id}}
\newcommand{\Hom}{\operatorname{Hom}}

\def\E{\ifmmode{\mathbb E}\else{$\mathbb E$}\fi} %natural numbers
\def\N{\ifmmode{\mathbb N}\else{$\mathbb N$}\fi} %natural numbers
\def\R{\ifmmode{\mathbb R}\else{$\mathbb R$}\fi} %real numbers
\def\Q{\ifmmode{\mathbb Q}\else{$\mathbb Q$}\fi} %rational numbers
\def\C{\ifmmode{\mathbb C}\else{$\mathbb C$}\fi} %complex numbers
\def\H{\ifmmode{\mathbb H}\else{$\mathbb H$}\fi} %complex numbers
\def\Z{\ifmmode{\mathbb Z}\else{$\mathbb Z$}\fi} %integers
\def\P{\ifmmode{\mathbb P}\else{$\mathbb P$}\fi} %real numbers
\def\T{\ifmmode{\mathbb T}\else{$\mathbb T$}\fi} %real numbers
\def\SS{\ifmmode{\mathbb S}\else{$\mathbb S$}\fi} %real numbers
\def\DD{\ifmmode{\mathbb D}\else{$\mathbb D$}\fi} %real numbers

\topmargin.5in \textheight8.2in \textwidth6.5in \oddsidemargin0in
\evensidemargin0in

\begin{document}

\title{Noncommutative augmentation categories}

\author{Baptiste Chantraine}
\author{Georgios Dimitroglou Rizell}
\author{Paolo Ghiggini}
\author{Roman Golovko}

\address{Universit\'e de Nantes, France.}
\email{baptiste.chantraine@univ-nantes.fr}

\address{University of Cambridge, United Kingdom.}
\email{g.dimitroglou@maths.cam.ac.uk}

\address{Universit\'e de Nantes, France.}
\email{paolo.ghiggini@univ-nantes.fr}

\address{Alfr\'{e}d R\'{e}nyi Institute of Mathematics, Hungary.}
\email{golovko.roman@renyi.mta.hu}

\thanks{The first author is partially supported by the
  ANR project COSPIN (ANR-13-JS01-0008-01) and the ERC starting grant
  G\'eodycon.  The second author is supported by the grant KAW
  2013.0321 from the Knut and Alice Wallenberg Foundation. The third
  author is partially supported by the ERC starting grant
  G\'eodycon. The fourth author is supported by the ERC Advanced Grant
  LDTBud.}

\begin{abstract}
To a differential graded algebra with coefficients in a
noncommutative algebra, by dualisation we associate an
$A_\infty$-category whose objects are augmentations. This
generalises the augmentation category of Bourgeois and Chantraine
\cite{augcat} to the noncommutative world.
\end{abstract}

\keywords{Noncommutative dga, augmentation, $A_\infty$-category}

\maketitle

\markboth{Chantraine, Dimitroglou Rizell, Ghiggini,
Golovko}{Noncommutative augmentation categories}

\section{Introduction}
Differential graded algebras (DGAs for short) were introduced by
Cartan in \cite{cartan1954groupes} and occur naturally in a number
of different areas of geometry and topology. We are here interested
in those that appear in the context of Legendrian contact homology,
which is a powerful contact topological invariant due to Chekanov
\cite{Chekanov_DGA_Legendrian} and Eliashberg, Givental and Hofer
\cite{Eliashberg_&_SFT}. In its basic setup, this theory associates
a differential graded algebra, called the {\em
  Chekanov-Eliashberg DGA}, to a given Legendrian submanifold of a
contact manifold. The DGA homotopy type (or even, stable tame
isomorphism type) of the Chekanov-Eliashberg DGA is independent of
the choices made in the construction and invariant under isotopy
through Legendrian submanifolds. Because of some serious analytical
difficulties, Legendrian contact homology has been rigorously
defined only for Legendrian submanifolds of contactisations of
Liouville manifolds \cite{LCHgeneral} and in few other sporadic
cases \cite{Chekanov_DGA_Legendrian, 1024.57014, Sabloff_thesis,
Licata_Sabloff, Ekholm_Ng_subcritical}.

Since the Chekanov-Eliashberg DGA is semifree and fully
noncommutative, it can be difficult to extract invariants from it.
In fact, as an algebra, it is isomorphic to a tensor algebra (and
therefore is typically of infinite rank) and its differential is
nonlinear with respect to the generators.

To circumvent these difficulties, Chekanov introduced his
linearisation procedure in \cite{Chekanov_DGA_Legendrian}: to a
differential graded algebra equipped with an augmentation he
associates a chain complex which is generated, {\em as a module}, by
the generators of the DGA {\em as an algebra}. The differential then
becomes linear at the price of losing the information which is
contained in the multiplicative structure of the DGA, but at least
the homology of the linearised complex is computable.  It is well
known that the set of isomorphism classes of linearised homologies
is invariant under DGA homotopy; see e.g.~\cite[Theorem
2.8]{Bourgeois_Survey}. Thus, linearised Legendrian contact homology
provides us with a computable Legendrian isotopy invariant.

In order to recover at least part of the nonlinear information lost
in the linearisation, one can study products in the linearised
Legendrian contact homology groups induced by the product structure
of the Chekanov-Eliashberg DGA.

Civan, Koprowski, Etnyre, Sabloff and Walker in
\cite{Productstructure} endowed Chekanov's linearised chain complex
with an $A_\infty$-structure.  This construction was generalised in
\cite{augcat} by the first author and Bourgeois, who showed that a
differential graded algebra naturally produces an
$A_\infty$-category whose objects are its augmentations. In
dimension three, the $A_\infty$-category constructed by the first
author and Bourgeois admits a unital refinement defined by Ng,
Rutherford, Shende, Sivek and Zaslow in~\cite{NRSSZ}.  The latter article also establishes an equivalence between this unital $A_\infty$-category and one defined in terms of derived sheaves of microlocal rank one with microsupport given by a fixed Legendrian knot. Our expectation is that the $A_\infty$-structures constructed here correspond to such sheaves being of arbitrary microlocal rank.

$A_\infty$-algebras are by now classical structures which were first
introduced by Stasheff in \cite{HomotopyAssociativity} as a tool in
the study of `group-like' topological spaces. Fukaya was the first
to upgrade the notion of an $A_\infty$-algebra to that of an
$A_\infty$-category. In \cite{MorseHomotopy} he associated an
$A_\infty$-category, which now goes under the name of the
\emph{Fukaya category}, to a symplectic manifold.  See
\cite{Seidel_Fukaya} for a good introduction. Inspired by Fukaya's
work \cite{MorseHomotopy}, Kontsevich in
\cite{kontsevich1995homological} formulated the {\em homological
mirror symmetry conjecture} relating the derived Fukaya category of
a symplectic manifold to the derived category of coherent sheaves on
a ``mirror'' manifold.

The construction in \cite{Productstructure} and \cite{augcat}
defines $A_\infty$-operations only when the coefficient ring of the
DGA is commutative. The goal of this paper is to extend that
construction to noncommutative coefficient rings in the following
two cases:
\begin{enumerate}
\item[(I)] the coefficients of the DGA as well as the augmentations
  are taken in a unital noncommutative algebra, or
\item[(II)] the coefficients of the DGA as well as the augmentations
  are taken in a noncommutative {\em Hermitian algebra}. (See
Definition~\ref{defn: hermitian algebra}.) This case includes both
finite-dimensional algebras over a field and group rings.
\end{enumerate}
 Case (II) is obviously included in Case (I), but we will see that there is a particularly
nice  alternative construction of an $A_\infty$-structure  in case
(II) which gives a different result. We refer to Subsections
\ref{subsec:case1} and \ref{subsec:case2} for the respective
constructions. Both generalisations above are sensible to study when
having Legendrian isotopy invariants in mind, albeit for different
reasons.

Case (I) occurs  because there are  Legendrian submanifolds whose
Chekanov-Eliashberg DGA does not admit augmentations in any unital
algebra of finite rank over a commutative ring, but  admits an
augmentation in a unital noncommutative infinite-dimensional one
(for example, in their characteristic algebras). The first such
examples were Legendrian knots constructed by Sivek in
\cite{TheContHomofLegKNotswithMAXTBI} building on examples found by Shonkwiler and Shea Vela-Vick in \cite{Shink_Vela}.
 From them, the second and fourth authors
constructed higher dimensional examples in
\cite{EstimNumbrReebChordLinReprCharAlg}. Observe that any
differential graded algebra has an augmentation in its
``characteristic algebra'', introduced by Ng in \cite{Ngcomputable},
which is the quotient of the DGA by the two-sided ideal generated by
its boundaries. This algebra is in general noncommutative and
infinite-dimensional, and any augmentation factors through it. It is
of course possible that the characteristic algebra vanishes, but it
does so if and only if the DGA is acyclic \cite{Characteristic}. The
\emph{complex} that we will define in case (I) (but not the higher
order operations) was used in \cite{Caps} by the second author in
order to deduce that a Legendrian submanifold with a non-acyclic
Chekanov-Eliashberg DGA does not admit a displaceable Lagrangian
cap.

Finally, we note that the construction we give in Case (I) is
closely related to the $A_\infty$-structures and bounding cochains
with noncommutative coefficients as introduced by Cho, Hong and
Sui-Cheong in their recent work \cite{Cho}.  Namely, the (uncurved)
$A_\infty$-structures that we produce from a DGA and its
augmentations can be seen to coincide with the (uncurved)
$A_\infty$-structures produced by their bounding cochains.

Case (II) also occurs naturally in the context of Legendrian contact
homology. For example, in \cite{Satellites} Ng and Rutherford show
that augmentations of certain satellites of Legendrian knots induce
augmentations in matrix algebras for the Chekanov-Eliashberg DGA of
the underlying knot. Moreover, coefficients in a group ring appear
naturally if one considers the Chekanov-Eliashberg DGA with
coefficients ``twisted'' by the fundamental group of the Legendrian
submanifold.  We learned this construction from Eriksson-\"{O}stman,
who makes use of it in his upcoming work \cite{Albin}. This version
of Legendrian contact homology  can be seen as a natural
generalisation of Morse homology and Floer homology with
coefficients twisted by the fundamental group; see the work
\cite{KFloer} by Sullivan and \cite{Damian_Lifted} by Damian. In the
setting of Legendrian contact homology with twisted coefficients, an
exact Lagrangian filling gives rise to an augmentation taking values
in the group ring of the fundamental group of the filling. See the
work \cite{Cthulhu} by the authors for more details, were Legendrian
contact homology  with twisted coefficients is used to  study the
topology of Lagrangian fillings and cobordisms.

In Section \ref{sec:computation} we outline how our construction can be used as an efficient computational tool for distinguishing a Legendrian knot from its Legendrian mirror in the case when there are no augmentations in a commutative algebra. Note that it, in general, it is much easier to extract invariants from the $A_\infty$-algebra compared to the DGA.

Finally, we recall that Legendrian contact homology is not the only
place where noncommutative graded algebras appear in symplectic
geometry. Another source is cluster homology, a proposed
generalisation of Lagrangian Floer homology due to Cornea and
Lalonde \cite{Cluster}, which is supposed to provide an alternative
approach to the $A_\infty$-structures in Floer homology introduced
by Fukaya, Oh, Ohta and Ono \cite{fooo}.

\subsection*{Acknowledgements}
We would like to thank the organisers of the twenty-second G\"okova
Geometry-Topology conference for the wonderful mathematical, as well
as natural, environment,  and the Institut Mittag-Leffler for
hospitality during the program ``Symplectic Geometry and Topology'',
when part of this article was written. The first author has also
benefited from the hospitality of CIRGET in Montr\'eal and IAS in
Princeton. Last but not least, we thank Lenny Ng  and Stiven Sivek
for useful discussions. The example in Section~\ref{sec: Lenny's
example}, suggested to us by Ng, was a major source of inspiration.

\section{Algebraic preliminaries}
\label{sec:preliminaries}

In this section, we fix some notations and recall some basic
definitions from the theory of modules over (possibly
noncommutative) algebras. For more details of this theory, we refer
to \cite{dummit2004abstract}. We also introduce some notation that
will simplify the various formulas for the $A_\infty$-structures
that we will define.

\subsection{Bimodules and tensor products}
In this paper $R$ will always denote a commutative ring and $A$ will
denote  a unital algebra over $R$ which is not necessarily
commutative. Important examples will be the matrix algebra
$M_{n}(R)$ corresponding to the endomorphisms of the free $R$-module
$R^n$, and the group ring $R[G]$ of an arbitrary group $G$.

For $R$-modules $M$, $N$ we denote by
\[ M \otimes N :=  M \otimes_R N \]
their tensor product as $R$-modules. Moreover, if $M$, $N$ are
$A$--$A$-bimodules, their (balanced) tensor product is denoted by
\[ M \boxtimes N := M\otimes_A N.\]
We recall that the balanced tensor product is the quotient of $M
 \otimes N$  by $ma\otimes n = m\otimes an$ for all $a \in A$,
$m\in M$ and $n\in N$.

 We also remind that a free $A$--$A$-bimodule $M$ on generating
  set $B$ is an $A$--$A$-bimodule $M$ and a map $i:B\to M$  of sets such
  that, for any $A$--$A$-bimodule $N$ and any map $f:B\to N$  of sets,
  there is a unique $A$--$A$-bimodule morphism $\overline{f}:M\to
  N$ such that $\overline{f}\circ i=f$. The elements of ${\mathcal B}= i(B)$
in $M$  are a basis for $M$. The free $A$--$A$-bimodule with basis
${\mathcal B}$ will often be identified with $\oplus_{c \in
{\mathcal B}} (A \otimes_{R} A)$, where the action of $A$ from the
left (resp. right)  acts by multiplication from the left (resp.
right) on the left  (resp. right) factor. Elements of $M$ will also
written as linear combinations of elements of the form $ a_+ c a_-$
with $a_\pm \in A$ and $c \in {\mathcal B}$.

A \emph{grading} of an $A$--$A$-bimodule $M$ in the group $\Z/\Z2
\mu$ is a direct sum decomposition $M = \oplus_{g \in \Z/\Z2 \mu}
M_g$. If $m \in M_g$, we write $|m| =g$. The tensor product of
graded bimodules is graded by the usual rule.

\subsection{Tensor algebras}
Given an $A$--$A$-bimodule $M$, we define the {\em tensor algebra}
of $M$ as the algebra
\[ \mathcal{T}_A(M):=\bigoplus_{n=0}^\infty M^{\boxtimes n}\]
with the multiplication
\[ \mathfrak{m} \colon \mathcal{T}_A(M) \boxtimes \mathcal{T}_A(M) \to
\mathcal{T}_A(M), \]
\[ \mathfrak{m}((m_1 \boxtimes \ldots \boxtimes m_i)
\boxtimes (n_1 \boxtimes \ldots \boxtimes n_j)) = m_1 \boxtimes
\ldots \boxtimes m_i \boxtimes n_1 \boxtimes \ldots \boxtimes n_j.
\]
 Here we have used the notation
\begin{align*}
M^{\boxtimes 0} &:= A,\\
M^{\boxtimes n} & := \underbrace{M \boxtimes \hdots \boxtimes M}_n.
\end{align*}
We will call $M^{\boxtimes 0}$ the {\em zero-length part} of
$\mathcal{T}_A(M)$.

If $M$ is a graded bimodule, then the tensor algebra
$\mathcal{T}_A(M)$ inherits a grading
\[\mathcal{T}_A(M) = \bigoplus_{g \in \Z/\Z2 \mu} \mathcal{T}_A(M)_g \]
by requiring that the zero-length part lives in degree zero; i.e.
$M^{\boxtimes 0} \subset \mathcal{T}_A(M)_g$ and
\[\mathfrak{m} \colon \mathcal{T}_A(M)_{g_1} \boxtimes \mathcal{T}_A(M)_{g_1} \to
\mathcal{T}_A(M)_{g_1+g_2}.\]

In this article, algebra maps will always be unital. Algebra maps
$\mathcal{T}_A(M) \to \mathcal{T}_A(N)$ between tensor algebras over
$A$ will always be morphisms of $A$--$A$-bimodules, and in
particular they will restrict to the identity $A=M^{\boxtimes 0} \to
N^{\boxtimes 0}=A$ on the zero-length parts. On the other hand,
algebra maps $\mathcal{T}_A(M) \to B$ for a general $R$-algebra $B$
will be $R$--$R$-bimodule morphisms, and their restriction to
$M^{\boxtimes 0}=A$  induces a unital $R$-algebra morphisms $A \to
B$. Algebra maps defined on $\mathcal{T}_A(M)$ are determined by
their restrictions to $M^{\boxtimes 0}=A$ and $M^{\boxtimes 1} = M$.

Note that, as in the case when $A$ is commutative, there is a notion
of ``free product'' of tensor algebras defined by
\[ \mathcal{T}_A(M) \star \mathcal{T}_A(N):=\mathcal{T}_A(M \oplus N),\]
which is again a tensor algebra. Moreover, for algebra maps $f_i
\colon \mathcal{A}_i \to \mathcal{B}_i$, $i=1,\hdots,n$, between
tensor algebras $\mathcal{A}_1, \ldots \mathcal{A}_n, \mathcal{B}_1,
\ldots \mathcal{B}_n$, there is a naturally induced algebra map
\[f_1 \star \hdots \star f_n \colon \mathcal{A}_1 \star \hdots \star \mathcal{A}_n
\to \mathcal{B}_1 \star \hdots \star \mathcal{B}_n\] between the
corresponding free products.

In all our applications, $M$ will be a free $A$--$A$-bimodules. In
this case, if the elements $c_1, \ldots, c_m,\ldots$ freely generate
the $A$--$A$-bimodule $M$, they also generate the algebra
$\mathcal{T}_A(M)$ in the following sense: every element in
$M^{\boxtimes n}$ can be written as $a_0c_{i_1}a_1 \ldots
c_{i_n}a_n$, with $a_0, \ldots, a_n \in A$.
\subsection{Duals}
\label{sec:bimoduledual} For an $R$-module $M$, we denote by $M^*$
the dual module $\Hom_R(M,R)$. If $M$ is free with a given finite
basis ${\mathcal B}$, then $M^*$ is again free with a dual basis
${\mathcal B}^*$ induced by ${\mathcal B}$. We recall that, for any
$c \in {\mathcal B}$, the dual basis element $c^*$ is the element of
$M^*$ which maps $c$ to $1 \in R$ and any other element of
${\mathcal B}$ to $0$. Hence, when the basis is part of the data, we
will identify $M$ with $M^*$ by identifying $c$ with $c^*$ for all
$c \in {\mathcal B}$.
 If ${\mathcal B}$ is not finite, the above construction
only gives an injection $M \to M^*$.

Given a $R$-module map $f:M\rightarrow N$, we denote the adjoint
morphism by $f^*:N^*\rightarrow M^*$. Again, if $M$ and $N$ are free
with given finite bases, then we denote $f^* \colon N\rightarrow M$.

Let $A$ be any $R$-algebra. We can regard $A$ as a nonfree
$A$--$A$-bimodule over itself. For an $A$--$A$-bimodule $M$ we will
define $M^\vee := \Hom_{A-A}(M,A)$ in the sense of bimodules.
Observe that in general $M^\vee$ only has the structure of an
$R$-module. If $M$ is a free and finitely generated
$A$--$A$-bimodule with a preferred basis ${\mathcal B}$, then
$M^\vee$ can be identified with a free $A$-module with the same
basis. The correspondence is given by
\begin{equation}\label{pippo}
M^\vee \ni \varphi \mapsto \sum \limits_{c \in {\mathcal B}}
\varphi(c)c.
\end{equation}
Again, any morphism $f \colon M \to N$ of bimodules gives rise to an
adjoint morphism
\[ f^\vee \colon N^\vee \to M^\vee \]
which, typically, is only a morphism of bimodules.

We define morphisms of $R$-modules $\psi_n \colon (M^\vee)^{\otimes
n} \to (M^{\boxtimes n})^\vee$ by
\begin{equation}\label{pluto}
\psi_n(\beta_1 \otimes \hdots \otimes \beta_n)(m_1 \boxtimes \hdots
\boxtimes m_n)= \beta_1(m_1) \hdots \beta_n(m_m)
\end{equation}
for $\beta_i \in M^\vee$ and $m_i \in M$. Note that $\psi_n(\beta_1
\otimes \hdots \otimes \beta_n )$ is well defined on the balanced
tensor product since the $\beta_i$ are bimodule morphisms. The maps
$\psi_n$ cannot be seen as morphisms of bimodules in any sensible
way.

If $M$ is graded, then the dual modules $M^*=\Hom_R(M,R)$ and
$M^\vee =\Hom_{A-A}(M,A)$ (when defined) are also graded with
gradings $(M^\vee)_g:=(M_{g-1})^\vee$ and $(M^*)_g:=(M_{g-1})^*$,
i.e.~the \emph{suspension} of the dual gradings.

\subsection{Hermitian algebras}
We are interested in duals $\Hom_R(M,R)$ of $A$--$A$-bimodules $M$
for algebras $A$ which are not necessarily finitely generated free
$R$-modules. For that reason, in order to have a better behaving
theory, we will introduce some additional structure on the algebra
$A$.

A commutative ring $R$ is an {\em involutive ring} if it is endowed
with an involution $r \mapsto \overline{r}$ (called {\em
conjugation}), which is also a ring homomorphism. The prototypical
example to keep in mind is the field of complex numbers, but we will
also allow involutive rings where the conjugation is the identity.
From now on every ring will be tacitly considered involutive,
possibly with a trivial involution.

\begin{defn}\label{defn: hermitian algebra}
A {\em Hermitian algebra} $(A, \star, \mathfrak{t})$ over an
involutive ring $R$ consists of:
\begin{itemize}
\item an $R$-algebra $A$,
\item a map
\begin{gather*}
\star \colon A \to A, \\
a \mapsto a^\star,
\end{gather*}
satisfying
\begin{enumerate}
\item $(r a + s b)^\star = \overline{r}a^\star + \overline{s} b^\star$ for all $r,s \in R$
and all $a,b \in A$,
\item $(ab)^\star=b^\star a^\star$, and
\item $(a^\star)^\star=a$, and
\end{enumerate}
\item a Hermitian form
$$\mathfrak{t} \colon A \times A \to R$$
such that
$$\mathfrak{t}(ba, c)=\mathfrak{t}(a, b^\star c) = \mathfrak{t}(b, ca^\star)$$
for all $a,b,c \in A$, and which is non-degenerate in the following
strong sense. For any $n \ge 1$, the morphism
\begin{gather*}
\iota \colon \underbrace{A \otimes_R \hdots \otimes_R A}_n \to
(\underbrace{A \otimes_R \hdots \otimes_R A}_n)^*,\\
x \mapsto \iota_x,
\end{gather*}
determined by
\[\iota_{a_1 \otimes \hdots \otimes a_n}(a_1' \otimes \hdots \otimes a_n') =
\mathfrak{t}(a_1',a_1)\cdot \hdots \cdot \mathfrak{t}(a_n',a_n) \in
R \] is injective.
\end{itemize}
\end{defn}
From (2) and (3) it follows that $1^\star = 1$. In fact $a =
(a^\star)^\star = (1 a^\star)^\star = a 1^\star$, and similarly $a=
1^\star a$ for all $a \in A$.
\begin{Rem}
There are two cases in which the above non-degeneracy for $n > 1$
follows from the case $n=1$:
\begin{enumerate}
\item $A$ is free (possibly infinitely generated) as an $R$-module and
$\mathfrak{t}$ is induced by the canonical pairing of its basis
elements, or
\item $R$ is a domain.
\end{enumerate}
\end{Rem}
\noindent Note that, if the conjugation on $R$ is trivial,
$\mathfrak{t}$ is a symmetric bilinear form.

Our main examples of Hermitian algebras will be the group ring
$R[G]$ over an arbitrary group $G$ and the matrix algebras
$M_{n}(R)$; in both cases $R$ is an arbitrary commutative ring. On
the group ring the involution is induced by the inverse in $G$, i.e.
$g^\star = g^{-1}$ on the basis elements $g \in G$, and
$\mathfrak{t}$ is the scalar product for which the group elements $g
\in G$ form an orthonormal basis. On the matrix algebra $M_{n}(R)$
we distinguish whether the conjugation on $R$ is trivial or not. In
the first case, the involution in $M_{n}(R)$ is the transposition,
and in the second case it is the adjoint (i.e. the transposition
followed by the conjugations). In both types of matrix algebras,
$$\mathfrak{t}(a,b):=\OP{tr}(b^\star a), \:\: a,b \in M_{n}(R)$$
is given by the trace. For simplicity, from now on we will consider
only Hermitian algebras over commutative rings whose conjugation is
trivial.

\subsection{Bimodules over Hermitian algebras and their duals}

Let $M$ be an arbitrary $A$--$A$-bimodule over a Hermitian algebra
$A$. The involution on $A$ allows us to define an  $A$--$A$-bimodule
structure on $M^*=\Hom_R(M,R)$ by
\begin{equation}\label{bimodule via star}
(a_1\varphi a_2)(m) := \varphi(a_1^\star m a_2^\star)
\end{equation}
for any $a_1, a_2 \in A$, $m \in M$ and $\varphi \in \Hom_R(M,R)$.
\begin{Lem}\label{adjoint is a bimodule map}
Let $A$ be a Hermitian algebra and let $f \colon M \to N$ be a
morphism between $A$--$A$-bimodules $M$ and $N$. Then the adjoint
map $f^* \colon N^* \to M^*$ is also a morphism of
$A$--$A$-bimodules.
\end{Lem}
\begin{proof}
The proof is a simple computation: Let $\varphi \in N^*$, $m \in M$
and $a_+, a_- \in A$. Then
\[f^*(a_+ \varphi a_- ) (m)= \varphi (a_+^\star f(m) a_-^\star)= \varphi (f(a_+^\star
m a_-^\star))= a_+ f^*(\varphi(m)) a_-. \]
\end{proof}

Now consider a free $A$--$A$-bimodule $M$ with a preferred basis
${\mathcal B}$. The bilinear pairing $\mathfrak{t}$ on $A$ and the
basis ${\mathcal B}$ induce an $R$-bilinear pairing on each
$A$--$A$-bimodule $M^{\boxtimes n}$, $n \ge 0$, which, on elements
of the form $a_0d_1a_1\ldots d_na_n$ with $a_i \in A$ and $d_i \in
{\mathcal B}$ is defined by
\[ \langle a_0d_1a_1\ldots d_na_n, a_0'd_1'a_1'\ldots d_n'a_n' \rangle =
\begin{cases}
\mathfrak{t}(a_0, a_0')\cdot \ldots \cdot \mathfrak{t}(a_n, a_n') &
\text{if }
d_i=d_i', \: i=1,\hdots,n, \\
0 & \text{otherwise},
\end{cases} \]
where $d_i' \in {\mathcal B}$ as well. It can be checked explicitly
that
\[\langle a_1ma_2,n \rangle = \langle m,a_1^\star na_2^\star \rangle \]
holds for all $m,n \in M^{\boxtimes n}$ and $a_1,a_2 \in A$. By the
assumption of nondegeneracy of $\mathfrak{t}$, this pairing then
induces an injection
\begin{gather}
\iota^{(n)} \colon M^{\boxtimes n} \hookrightarrow (M^{\boxtimes n})^*,\nonumber\\
m \mapsto \langle m, \x \rangle \in \Hom_R(M^{\boxtimes n},R),
\label{inclusion in dual}
\end{gather}
for each $n \ge 0$. The identifications $\iota^{(n)}$ also satisfy
the following property.
\begin{Lem}
Let $A$ be a Hermitian algebra and let $M$ be a free
$A$--$A$-bimodule with a preferred basis. Then the inclusion
$\iota^{(n)}$ for $n \ge 1$ is a morphism of $A$--$A$-bimodules for
the bimodule structure on $(M^{\boxtimes n})^*$ described in
Equation~\eqref{bimodule via star}.
\end{Lem}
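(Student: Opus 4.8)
The statement to prove is that $\iota^{(n)}\colon M^{\boxtimes n}\hookrightarrow (M^{\boxtimes n})^*$ is a morphism of $A$--$A$-bimodules, where the target carries the bimodule structure of Equation~\eqref{bimodule via star}. Since $\iota^{(n)}$ is by construction an $R$-linear injection, the only thing to verify is compatibility with the left and right $A$-actions. My plan is to reduce this to the pairing identity $\langle a_1 m a_2, n\rangle = \langle m, a_1^\star n a_2^\star\rangle$ that is already asserted (``It can be checked explicitly that'') just before Equation~\eqref{inclusion in dual}. The cleanest route is to unwind both sides evaluated on an arbitrary test element.

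First I would fix $m\in M^{\boxtimes n}$, $a_+,a_-\in A$, and an arbitrary $n'\in M^{\boxtimes n}$, and compute $\iota^{(n)}(a_+ m a_-)$ applied to $n'$: by definition of $\iota^{(n)}$ this is $\langle a_+ m a_-, n'\rangle$. On the other hand, $\bigl(a_+\,\iota^{(n)}(m)\,a_-\bigr)(n')$, using the bimodule structure \eqref{bimodule via star} on $(M^{\boxtimes n})^*$, equals $\iota^{(n)}(m)(a_+^\star n' a_-^\star) = \langle m, a_+^\star n' a_-^\star\rangle$. So the claim is exactly the identity $\langle a_+ m a_-, n'\rangle = \langle m, a_+^\star n' a_-^\star\rangle$ for all $n'$, which is the displayed compatibility property of the pairing $\langle\,,\rangle$ on $M^{\boxtimes n}$. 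Hence $\iota^{(n)}(a_+ m a_-) = a_+ \iota^{(n)}(m) a_-$, which is precisely the bimodule morphism property.

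The only real content, then, is the pairing identity itself, and if one wants the proof to be self-contained rather than citing the parenthetical ``can be checked explicitly'', I would verify it on basis-type elements $m = a_0 d_1 a_1 \ldots d_n a_n$ and $n' = a_0' d_1' a_1'\ldots d_n' a_n'$ with $d_i, d_i'\in\mathcal B$, where it suffices by $R$-bilinearity. Writing $a_+ m a_- = (a_+ a_0) d_1 a_1 \ldots d_n (a_n a_-)$ and $a_+^\star n' a_-^\star = (a_+^\star a_0') d_1' a_1' \ldots d_n' (a_n' a_-^\star)$, both pairings vanish unless $d_i = d_i'$ for all $i$; when they agree, both sides become products of $\mathfrak t$-terms, and matching them comes down to $\mathfrak t(a_+ a_0, a_0') = \mathfrak t(a_0, a_+^\star a_0')$ and $\mathfrak t(a_n a_-, a_n') = \mathfrak t(a_n, a_n' a_-^\star)$, while the middle factors $\mathfrak t(a_i, a_i')$ for $1\le i\le n-1$ are untouched. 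These two identities are exactly the two defining relations $\mathfrak t(ba,c) = \mathfrak t(a,b^\star c)$ and $\mathfrak t(ba,c) = \mathfrak t(b,ca^\star)$ of the Hermitian form (applied with the roles of the arguments as needed, using also $(a^\star)^\star = a$).

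**Main obstacle.** There is no serious obstacle; the argument is a direct unwinding of definitions. The one point to be careful about is bookkeeping in the basis-level verification — making sure that the left action of $a_+$ is absorbed into $a_0$ and the right action of $a_-$ into $a_n$, so that exactly the two \emph{outermost} $\mathfrak t$-slots are affected and the interior slots $\mathfrak t(a_i,a_i')$, $1\le i \le n-1$, are genuinely unchanged. I would also note explicitly why the hypothesis $n\ge 1$ is needed: for $n=0$ one has $M^{\boxtimes 0}=A$ and the pairing is $\mathfrak t$ itself, but the relevant statement there is about the case-$n$ nondegeneracy map $\iota$ of Definition~\ref{defn: hermitian algebra} rather than this suspended structure, so the lemma is only claimed for $n\ge 1$.
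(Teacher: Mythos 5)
Your proposal is correct and is essentially the paper's own proof: both reduce the claim to the pairing identity $\langle a_1ma_2,n\rangle=\langle m,a_1^\star na_2^\star\rangle$ by unwinding the bimodule structure of Equation~\eqref{bimodule via star} against the definition of $\iota^{(n)}$. Your extra basis-level verification of that identity (absorbing $a_+$ and $a_-$ into the outermost slots and applying the two defining relations of $\mathfrak t$) is a correct filling-in of what the paper dismisses as ``can be checked explicitly.''
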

\begin{proof}
For any $m \in M$ and $a_1,a_2 \in A$ we compute
\[a_1\langle m, \x \rangle a_2 = \langle m,a_1^\star \x a_2^\star \rangle =
\langle a_1ma_2,\x \rangle,\] which shows the claim.
\end{proof}
We will often tacitly identify $M^{\boxtimes n}$ with its image in
$(M^{\boxtimes n})^*$ under the inclusion $\iota^{(n)}$. In general,
it is not necessarily the case that an $R$-module morphism $f \colon
M \to N$ between free infinitely generated $R$-modules has an
adjoint morphism $f^* \colon N^* \to M^*$ that restricts to a
morphism $f^* \colon N \to M \subset M^*$ for these submodules $N
\subset N^*$ and $M \subset M^*$. However, this turns out to be the
case for a large class of maps that we are interested in here. First
we give the following useful formula.

\begin{Lem}
\label{lem:mainformula} Let $A$ be a Hermitian algebra, and let
$M,N$ be free finitely generated $A$--$A$-bimodules with preferred
bases. Consider a morphism $g \colon M \to N^{\boxtimes n}$, $n \ge
1$, of $A$--$A$-bimodules which vanishes on all basis elements
except a single $c \in M$, on which it takes the form
\[  g(c)= a_0d_1a_1 \ldots d_n a_n,\]
where $d_i \in N$ again are basis elements and $a_i \in A$. Then
\begin{eqnarray*}
\lefteqn{g^*( \langle a_0'd_1'a_1'\ldots a_{n-1}'d_n'a_n', \x \rangle)=}\\
&=& \begin{cases} \langle (\mathfrak{t}(a_1,a_1')\cdot\hdots\cdot
\mathfrak{t}(a_{n-1},a_{n-1}')) \cdot a_0' a_0^\star \cdot c \cdot
a_n^\star  a_n',\x \rangle, & \text{if } d_i=d_i', \:
i=1,\hdots,n,\\
0, & \text{otherwise}
\end{cases}
\end{eqnarray*}
for arbitrary basis elements $d_i' \in N$ and $a_i' \in A$.
\end{Lem}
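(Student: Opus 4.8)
The plan is to unwind both sides of the claimed identity directly from the definitions of the adjoint, the pairing $\langle\,\cdot\,,\,\cdot\,\rangle$ on $N^{\boxtimes n}$ and the Hermitian property of $\mathfrak{t}$, reducing everything to the defining relation $\langle a_1 m a_2, p\rangle = \langle m, a_1^\star p a_2^\star\rangle$ together with the two-variable Hermitian identity $\mathfrak{t}(ba,c)=\mathfrak{t}(a,b^\star c)=\mathfrak{t}(b,ca^\star)$. Concretely, by definition of $g^*$ we have, for any $m \in M$,
\[
g^*\bigl(\langle a_0'd_1'a_1'\ldots d_n'a_n',\x\rangle\bigr)(m) = \langle a_0'd_1'a_1'\ldots d_n'a_n',\, g(m)\rangle .
\]
Since $g$ vanishes on all basis elements of $M$ except $c$, and a general element of $M$ has the form $\sum a_+ e\, a_-$ with $e$ ranging over the basis, it suffices by $R$-bilinearity to evaluate on $m = a_+ c\, a_-$ for arbitrary $a_\pm \in A$; there $g(m) = a_+ a_0 d_1 a_1 \ldots d_n a_n a_-$.

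First I would observe that if $d_i \neq d_i'$ for some $i$, then by the definition of the pairing on $N^{\boxtimes n}$ both sides vanish identically, so we may assume $d_i = d_i'$ for all $i$. Under that assumption,
\[
\langle a_0'd_1'a_1'\ldots d_n'a_n',\, a_+ a_0 d_1 a_1 \ldots d_n a_n a_-\rangle
= \mathfrak{t}(a_0', a_+ a_0)\,\mathfrak{t}(a_1',a_1)\cdots \mathfrak{t}(a_{n-1}',a_{n-1})\,\mathfrak{t}(a_n', a_n a_-).
\]
Now I would use the Hermitian identities to move $a_+$ and $a_-$: $\mathfrak{t}(a_0', a_+ a_0) = \mathfrak{t}(a_+^\star a_0', a_0) = \mathfrak{t}(a_0' a_0^\star, a_+)$ — more precisely I want the form that lets me recognise the right-hand side as a pairing evaluated on $a_+ c\, a_-$. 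Writing $\mathfrak{t}(a_0',a_+a_0) = \mathfrak{t}(a_0'a_0^\star, a_+)$ (using the third equality $\mathfrak{t}(b,ca^\star)=\mathfrak{t}(ba,c)$ with an appropriate relabelling) and $\mathfrak{t}(a_n', a_n a_-) = \mathfrak{t}(a_n^\star a_n', a_-)$ (using $\mathfrak{t}(ba,c)=\mathfrak{t}(a,b^\star c)$), the product above becomes
\[
\bigl(\mathfrak{t}(a_1,a_1')\cdots\mathfrak{t}(a_{n-1},a_{n-1}')\bigr)\cdot \mathfrak{t}\bigl(a_0' a_0^\star,\, a_+\bigr)\cdot \mathfrak{t}\bigl(a_n^\star a_n',\, a_-\bigr),
\]
which, by the definition of the pairing on $M^{\boxtimes 1}=M$ applied to the basis element $c$, is exactly $\langle (\mathfrak{t}(a_1,a_1')\cdots\mathfrak{t}(a_{n-1},a_{n-1}'))\cdot a_0'a_0^\star\cdot c\cdot a_n^\star a_n',\; a_+ c\, a_-\rangle$. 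Since this holds for all $a_\pm$, and both sides are determined by their values on such elements, the two functionals agree, giving the claim.

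The main obstacle I anticipate is purely bookkeeping: getting the conjugations and the left/right order exactly right when invoking the three-way Hermitian identity, since $\mathfrak{t}$ is not assumed symmetric in general (though the paper later restricts to trivial conjugation, where it is). In particular one must be careful that $\mathfrak{t}(a_0,a_0')$ appears with the arguments in the order dictated by $g^*$ acting on $\langle\,\cdot\,,\x\rangle$ versus $\langle\x,\cdot\,\rangle$, and that the symmetry-type relation $\langle a_1 m a_2, p\rangle = \langle m, a_1^\star p a_2^\star\rangle$ is applied in the correct direction; a sign- or order-error here would swap an $a_0^\star$ for an $a_0$ or put it on the wrong side of $c$. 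Everything else — the vanishing case, the reduction to generators $a_+ c\, a_-$, and the final recognition as a pairing — is routine.
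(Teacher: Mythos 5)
Your proof is correct and takes essentially the same route as the paper's: unwind $g^*$ against the explicit pairing and shuffle the outer coefficients across $\mathfrak{t}$ using the Hermitian identities (the symmetry $\mathfrak{t}(a,b)=\mathfrak{t}(b,a)$ you flag as a worry is indeed available, since the paper has restricted to trivial conjugation, and both your argument and the paper's use it). If anything yours is the more complete execution, because you verify the equality of the two functionals on the full $R$-spanning set $\{a_+\,c\,a_-\}$ of $M$, whereas the paper's two-line verification only evaluates on the bimodule basis elements $d$ themselves.
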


\begin{proof}
This is a simple verification; for a generator $d$ we have
$$ g^*( \langle a_0'd_1'a_1'\ldots a_{n-1}'d_n'a_n', \x \rangle)(d)=\langle a_0'd_1'a_1' \ldots a_{n-1}'d_n'a_n', g(d) \rangle.$$
This can be nonzero only if $ d=c$ where we get:
\begin{eqnarray*}
\lefteqn{\langle a_0'd_1'a_1'\ldots a_{n-1}'d_n'a_n', g(c) \rangle=}\\
&=&
\begin{cases}
\big(\mathfrak{t}(a_0,a_0')\cdot\mathfrak{t}(a_1,a_1')\cdots
\mathfrak{t}(a_n,a_n')
\big)& \text{if } d_i=d_i', \: i=1,\hdots,n,\\
0, & \text{otherwise.}
  \end{cases}
\end{eqnarray*}
On the other hand the expression
$$\langle (\mathfrak{t}(a_1,a_1')\cdot\hdots\cdot \mathfrak{t}(a_{n-1},a_{n-1}'))\cdot a_0' a_0^\star \cdot c \cdot a_n^\star  a_n',d \rangle$$
is nonzero only when $ d=c$. Moreover,
$$\langle (\mathfrak{t}(a_1,a_1')\cdot\hdots\cdot \mathfrak{t}(a_{n-1},a_{n-1}'))
\cdot a_0' a_0^\star \cdot c \cdot a_n^\star a_n', c
\rangle=\mathfrak{t}(a_0,a_0')\cdot\mathfrak{t}(a_1,a_1')\cdots
\mathfrak{t}(a_n,a_n')$$ holds since
$\mathfrak{t}(a,b)=\mathfrak{t}(a b^\star,1)$. This concludes the
proof.
\end{proof}
We similarly compute the following relation.

\begin{Lem}
\label{lem:mainformula2} Let $A$ be a Hermitian algebra, and let
$M,N$ be free finitely generated $A$--$A$-bimodules with preferred
bases. Consider a morphism $g \colon M \to N^{\boxtimes n}$, $n \ge
1$, of $A$--$A$-bimodules of as in Lemma \ref{lem:mainformula}. For
the morphism
\[ G=\id_M^{\boxtimes k} \boxtimes g \boxtimes \id_M^{\boxtimes l} \colon
M^{\boxtimes (k+1+l)} \to M^{\boxtimes k} \boxtimes N^{\boxtimes n}
\boxtimes M^{\boxtimes l}\] we then have
\begin{eqnarray*}
\lefteqn{G^*( \langle x \boxtimes a_0'd_1'a_1'\ldots
a_{n-1}'d_n'a_n' \boxtimes y,
\x \rangle)=}\\
&=& \begin{cases} \langle (\mathfrak{t}(a_1,a_1')\cdot\hdots\cdot
\mathfrak{t}(a_{n-1},a_{n-1}')) \cdot x \boxtimes a_0' a_0^\star
\cdot c \cdot a_n^\star  a_n' \boxtimes y,\x \rangle, &
\text{if } d_i=d_i', \: i=1,\hdots,n,\\
0, & \text{otherwise}
\end{cases}
\end{eqnarray*}
for arbitrary basis elements $d_i' \in N$ and $a_i' \in A$, and any
$x \in M^{\boxtimes k}$, $y \in M^{\boxtimes l}$.
\end{Lem}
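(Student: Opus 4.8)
The plan is to reduce Lemma~\ref{lem:mainformula2} to Lemma~\ref{lem:mainformula} by exploiting the fact that the pairing $\langle\,\cdot\,,\,\cdot\,\rangle$ on $M^{\boxtimes(k+1+l)}$ splits as a product of pairings on the three tensor factors $M^{\boxtimes k}$, $N^{\boxtimes n}$ (via $g$ landing in the middle slot), and $M^{\boxtimes l}$, and that the adjoint of a $\boxtimes$-product of maps is the $\boxtimes$-product of adjoints with respect to these split pairings. Concretely, I would first record the elementary fact that if $\phi\colon P\to P'$ and $\psi\colon Q\to Q'$ are morphisms of free finitely generated $A$--$A$-bimodules with preferred bases, then under the inclusions $\iota$ into the $R$-duals one has $(\phi\boxtimes\psi)^*\langle p'\boxtimes q',\x\rangle = \langle \phi^*\!\langle p',\x\rangle \boxtimes \psi^*\!\langle q',\x\rangle,\x\rangle$; this is immediate from the definition~\eqref{inclusion in dual} of the split pairing and the definition~\eqref{pluto}-style factorisation of evaluation on basis elements.

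Next I would apply this splitting to $G=\id_M^{\boxtimes k}\boxtimes g\boxtimes\id_M^{\boxtimes l}$. The adjoint of $\id_M^{\boxtimes k}$ is the identity on $(M^{\boxtimes k})^*$, which under $\iota^{(k)}$ restricts to the identity on $M^{\boxtimes k}$; likewise for the last factor. So $G^*\langle x\boxtimes z'\boxtimes y,\x\rangle = \langle x\boxtimes g^*\!\langle z',\x\rangle\boxtimes y,\x\rangle$ for $z'=a_0'd_1'a_1'\ldots a_{n-1}'d_n'a_n'$. Then I substitute the explicit formula for $g^*\langle z',\x\rangle$ furnished by Lemma~\ref{lem:mainformula}: it is zero unless $d_i=d_i'$ for all $i$, and otherwise equals $\langle(\mathfrak{t}(a_1,a_1')\cdots\mathfrak{t}(a_{n-1},a_{n-1}'))\cdot a_0'a_0^\star\cdot c\cdot a_n^\star a_n',\x\rangle$. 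Inserting this into the middle slot and using that the scalar $\mathfrak{t}(a_1,a_1')\cdots\mathfrak{t}(a_{n-1},a_{n-1}')\in R$ can be pulled out of the $\boxtimes$-product yields exactly the claimed expression.

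The main obstacle—really the only point requiring care—is justifying that the adjoint respects the tensor decomposition at the level of the submodules $M^{\boxtimes m}\subset(M^{\boxtimes m})^*$, rather than merely at the level of full $R$-duals; in particular one must check that evaluating $G^*\langle x\boxtimes z'\boxtimes y,\x\rangle$ on a basis element $x''\boxtimes z''\boxtimes y''$ of $M^{\boxtimes(k+1+l)}$ produces the pairing $\langle x,x''\rangle\cdot\langle g^*\langle z',\x\rangle, z''\rangle\cdot\langle y,y''\rangle$, with no cross terms between the slots. This is where one uses crucially that $g$ sends the single basis element $c$ into $N^{\boxtimes n}$ while killing all other basis elements of $M$, so $G$ takes basis elements of $M^{\boxtimes(k+1+l)}$ of the form $x''\boxtimes c\boxtimes y''$ to $x''\boxtimes g(c)\boxtimes y''$ and all others to zero; combined with the block-diagonal form of the pairing $\langle\,\cdot\,,\,\cdot\,\rangle$, the evaluation factors cleanly. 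Everything else is the same bookkeeping with $\mathfrak{t}$ that appears in the proof of Lemma~\ref{lem:mainformula}, so I would simply say ``this is a simple verification, entirely analogous to the proof of Lemma~\ref{lem:mainformula}, now carried out in the middle tensor factor'' and leave it at that.
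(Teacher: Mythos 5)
Your proposal is correct and follows essentially the same route as the paper: the paper's proof likewise just asserts that the displayed element represents $G^*(\langle x \boxtimes a_0'd_1'a_1'\ldots d_n'a_n' \boxtimes y, \x\rangle)$ by a straightforward verification analogous to Lemma~\ref{lem:mainformula}, which is exactly the block-diagonal pairing computation you describe. Your explicit attention to the absence of cross terms between the tensor slots (and to the junction coefficients being absorbed into the middle factor, as already reflected in the $a_0'a_0^\star \cdot c \cdot a_n^\star a_n'$ term of Lemma~\ref{lem:mainformula}) is the same bookkeeping the paper leaves implicit.
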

\begin{proof}
Again it is a matter of checking that
$(\mathfrak{t}(a_1,a_1')\cdot\hdots\cdot
\mathfrak{t}(a_{n-1},a_{n-1}'))\cdot x \boxtimes a_0' a_0^\star
\cdot c \cdot a_n^\star a_n' \boxtimes y$ represents the dual $G^*(
\langle x \boxtimes a_0'd_1'a_1'\ldots a_{n-1}'d_n'a_n' \boxtimes y,
\x \rangle)$. This follows by a straight-forward computation
similarly to the proof of Lemma \ref{lem:mainformula}.
\end{proof}

\begin{Prop}
\label{prp:mainprop} Let $A$ be a Hermitian algebra and let $M,N$ be
free finitely generated $A$--$A$-bimodules with preferred bases.
Given a morphism $f \colon M \to N^{\boxtimes n}$ of bimodules where
$n>0$, for any $k,l\geq 0$ we consider the induced bimodule morphism
\[ F=\id_M^{\boxtimes k} \boxtimes f \boxtimes \id_M^{\boxtimes l} \colon
M^{\boxtimes (k+1+l)} \to M^{\boxtimes k} \boxtimes N^{\boxtimes n}
\boxtimes M^{\boxtimes l}.\] Then the adjoint
\[F^* \colon ( M^{\boxtimes k} \boxtimes N^{\boxtimes n} \boxtimes M^{\boxtimes l} )^*
\to (M^{\boxtimes (k+l+1)})^*\] is a morphism of $A$--$A$-bimodules
restricting to a morphism of the form
\begin{gather*}
F^* \colon M^{\boxtimes k} \boxtimes N^{\boxtimes n} \boxtimes M^{\boxtimes l} \to M^{\boxtimes (k+l+1)},\\
F^* =\id_M^{\boxtimes k} \boxtimes f^* \boxtimes \id_M^{\boxtimes
l},
\end{gather*}
on the submodules defined by the inclusion in
Equation~\eqref{inclusion in dual}.
\end{Prop}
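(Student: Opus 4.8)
The plan is to reduce the statement to the two computational lemmas \ref{lem:mainformula} and \ref{lem:mainformula2} by decomposing $f$ into ``elementary'' pieces and using that the adjoint construction is additive. Note first that the assertion that $F^*$ is a morphism of $A$--$A$-bimodules for the bimodule structures on the duals coming from Equation~\eqref{bimodule via star} is immediate: $F$ is a bimodule morphism by construction, so Lemma~\ref{adjoint is a bimodule map} applies verbatim. Thus the only real content is that $F^*$ preserves the submodules singled out by the inclusion~\eqref{inclusion in dual} and has the claimed product form $\id_M^{\boxtimes k}\boxtimes f^*\boxtimes\id_M^{\boxtimes l}$ there.

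Next I would decompose $f$. Since $M$ is free and finitely generated with preferred basis $\mathcal{B}$, and since for each $c\in\mathcal{B}$ the element $f(c)\in N^{\boxtimes n}$ is a finite sum of terms of the shape $a_0d_1a_1\cdots d_na_n$ with the $d_i$ basis elements of $N$ and $a_i\in A$, one can write $f=\sum_{i=1}^r g_i$ as a finite sum, where each $g_i$ is a bimodule morphism of precisely the type treated in Lemma~\ref{lem:mainformula}: it vanishes on $\mathcal{B}$ except on one basis element, on which it takes a single monomial value. Applying $\id_M^{\boxtimes k}\boxtimes(-)\boxtimes\id_M^{\boxtimes l}$ gives $F=\sum_i G_i$ with $G_i=\id_M^{\boxtimes k}\boxtimes g_i\boxtimes\id_M^{\boxtimes l}$, and since pre-composition is additive we get $F^*=\sum_i G_i^*$; similarly each $g_i^*$ restricts to a bimodule map $N^{\boxtimes n}\to M$ by Lemma~\ref{lem:mainformula}, so $f^*:=\sum_i g_i^*$ restricts to $N^{\boxtimes n}\to M$ as well (and this restriction is independent of the chosen decomposition, being the restriction of the honest dual $f^*\colon(N^{\boxtimes n})^*\to M^*$).

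Then I apply Lemma~\ref{lem:mainformula2} to each $G_i$. Evaluated on a generator $\langle x\boxtimes a_0'd_1'a_1'\ldots a_{n-1}'d_n'a_n'\boxtimes y,\x\rangle$ of the image of $M^{\boxtimes k}\boxtimes N^{\boxtimes n}\boxtimes M^{\boxtimes l}$ under~\eqref{inclusion in dual}, the output $G_i^*(\cdots)$ is again manifestly of the form $\langle(\text{element of }M^{\boxtimes(k+l+1)}),\x\rangle$, hence lies in the image of~\eqref{inclusion in dual}; comparing with the $k=l=0$ case, i.e.\ Lemma~\ref{lem:mainformula}, identifies that element as $(\id_M^{\boxtimes k}\boxtimes g_i^*\boxtimes\id_M^{\boxtimes l})$ applied to $x\boxtimes a_0'd_1'a_1'\ldots a_{n-1}'d_n'a_n'\boxtimes y$. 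Summing over $i$ and using $F^*=\sum_i G_i^*$ together with $f^*=\sum_i g_i^*$ shows that $F^*$ restricts, on these submodules, to exactly $\id_M^{\boxtimes k}\boxtimes f^*\boxtimes\id_M^{\boxtimes l}$. Since every element of $M^{\boxtimes k}\boxtimes N^{\boxtimes n}\boxtimes M^{\boxtimes l}$ is an $R$-linear combination of such generators and both $F^*$ and the inclusion~\eqref{inclusion in dual} are $R$-linear, this is enough.

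I do not expect a genuine obstacle here; the work is bookkeeping, and that is precisely where care is needed. One must check that the decomposition $f=\sum g_i$ is finite and legitimate (using finite generation and freeness of $M$, plus the explicit basis description of $N^{\boxtimes n}$), that $f^*=\sum_i g_i^*$ makes sense already as a map $N^{\boxtimes n}\to M$ and is basis-independent, and that the right-hand sides of Lemma~\ref{lem:mainformula2} summed over $i$ genuinely reassemble the operator $\id_M^{\boxtimes k}\boxtimes f^*\boxtimes\id_M^{\boxtimes l}$ rather than merely agreeing with it on the chosen generators. Assembling these points carefully completes the proof.
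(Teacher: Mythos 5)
Your proposal is correct and follows essentially the same route as the paper: the bimodule property of $F^*$ is obtained from Lemma~\ref{adjoint is a bimodule map}, and the restriction statement by writing $F$ as a finite sum of elementary morphisms $\id_M^{\boxtimes k}\boxtimes g_i\boxtimes\id_M^{\boxtimes l}$ and invoking Lemma~\ref{lem:mainformula2}. You simply spell out the bookkeeping (finiteness of the decomposition, additivity of adjoints, reassembly of $f^*=\sum_i g_i^*$) that the paper leaves implicit.
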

\begin{proof}
  The fact that $F^{\ast}$ is a morphism of $A$-$A$-bimodules follows
  from Lemma~\ref{adjoint is a bimodule map}. The latter statement
  follows directly from Lemma~\ref{lem:mainformula2}. Namely, the
  morphism $F$ considered here can be written as a finite sum of
  morphisms $\id_M^{\boxtimes k} \boxtimes g \boxtimes
  \id_M^{\boxtimes l}$ satisfying the assumptions of Lemma~\ref{lem:mainformula2}.

\end{proof}
\begin{Rem}
Note that in order for property (2) above to hold, it is crucial
that $n>0$. For instance, the property is not satisfied for the
adjoint $m^*$ of the multiplication $m: R[G] \otimes R[G] \to R[G]$
for the group ring of an infinite group.
\end{Rem}

\section{Differential graded algebras over noncommutative
rings} In this section we recall some facts about differential
graded algebras which are well known for commutative coefficient
rings.

\subsection{Definitions}\label{sec:definitions}
Let $R$ be a unital commutative ring and $A$ a (not necessarily
commutative) unital algebra over $R$.
\begin{defn}
  A {\em differential graded algebra} $({\mathcal A}, \partial)$ over
  $A$ is a unital $\Z / 2 \mu \Z$-graded algebra $\mathcal{A}$ over $A$ whose
  differential $\partial \colon {\mathcal A} \to {\mathcal A}$
  is a morphism of $A$--$A$-bimodules satisfying the following
  properties:
\begin{enumerate}
\item $\partial \circ \partial =0$,
\item $\partial$ has degree $-1$, and
\item $\partial(xy)= \partial(x)y + (-1)^{|x|} x \partial(y)$ for all homogeneous elements
$x,y \in {\mathcal A}$. \label{leibniz}
\end{enumerate}
\end{defn}
The last equality is known as \emph{graded Leibniz rule}, and tells
us that $\partial$ is a derivation. Above $|x| \in \Z/2 \mu \Z$ is
the degree of $x$. The graded Leibniz rule (and the fact that $1$ is
homogeneous of degree $0$) implies that $\partial (1)=0$. In fact,
$\partial (1)= \partial(1 \cdot 1)=\partial(1) \cdot 1 + 1 \cdot
\partial(1)= \partial(1) +
\partial(1)$. Since $\partial$ is a morphism of $A$--$A$--bimodules, this implies
that $\partial(a \cdot 1)=0$ for all $a \in A$.

In this article we will consider only ``semifree'' differential
graded algebras with finitely many generators, i.e.~whose underlying
algebra is the tensor algebra
$${\mathcal A}= \mathcal{T}_A(M)=\bigoplus_{n=0}^\infty M^{\boxtimes n}$$
where $M$ is a finitely generated graded free $A$--$A$-bimodule.
Moreover we will always assume that $M$ comes with a specified
finite basis ${\mathcal B} = \{c_1, \ldots, c_k \}$ over $A$
consisting of homogeneous elements.

The differential is determined by its action on $M$, where it
decomposes as
\begin{equation}\label{decomposition of differential}
\partial|_M = \partial_0 + \partial_1 + \partial_2 + \ldots,
\end{equation}
where $\partial_n \colon M \to M^{\boxtimes n}$.   Clearly
$\partial_i =0$ for $i$ sufficiently large because $M$ is finitely
generated. We refer to $\partial_0\colon M = M^{\boxtimes 1}
\rightarrow  M^{\boxtimes 0} = A$ as the \textit{constant part} of
$\partial$. From the Leibniz rule it follows that the differential
of an element $x= a_0c_{i_1}a_1 \ldots c_{i_n}a_n \in M^{\boxtimes
n}$ is
\begin{align}
 \partial
x&=\sum_{j=0}^{n-1}(-1)^{|c_{i_1}|+\cdots+|c_{i_{j-1}}|}a_0c_{i_1}a_1\cdots
a_{j-1}
\partial(c_{i_j})a_j\cdots c_{i_n}a_n \nonumber \\
&=\sum_{k=0}^\infty\sum_{j=0}^{n-1}(-1)^{|c_{i_1}|+\cdots+|c_{i_{j-1}}|}a_0c_{i_1}a_1
\cdots a_{j-1} \partial_k(c_{i_j})a_j\cdots c_{i_n}a_n. \label{eq:2}
 \end{align}
Combining this with the fact that $\partial\circ\partial=0$, we get
the following relation for the maps $\partial_i$:
\begin{equation}
\label{eq:leibnizpartial} \sum_{k+l-1=n \atop k>0, \: l \ge 0 }
\sum_{i=0}^{k-1} (\sigma^{\boxtimes i}\boxtimes\partial_l\boxtimes
\id^{\boxtimes (k-1-i)})\circ \partial_k=0,
\end{equation}
for any fixed $n \ge 0$. Here $\sigma$ is the automorphism of $M$
which maps a homogenous element $m$ to $(-1)^{|m|}m$, and
$\sigma^0:=1$.

\subsection{Changing the coefficients}
\label{sec:change} Assume that we are given a differential graded
algebra with coefficients in $A$ and a morphism $A \to B$ of unital
$R$-algebras. In certain situations it will be useful to consider a
change of coefficients from $A$ to $B$. We recall that $B$ has an
induced structure of an $A$--$A$-bimodule and that $M_B:=B \boxtimes
M \boxtimes B$ is a free $B$--$B$-bimodule for any free
$A$--$A$-bimodule $M$.
\begin{Lem}\label{change of ring}
  Let $(\mathcal{A}, \partial)$ be a semi-free differential graded
  algebra over $A$ such that ${\mathcal A}$ is isomorphic to
  $\mathcal{T}_A(M)$ as an algebra, and let $f: A\rightarrow B$ be a unital algebra
  morphism. Then there exist:
\begin{itemize}
\item a unique semi-free differential graded algebra $(\mathcal{A}_B, \partial_B)$
over $B$ such that  $\mathcal{A}_B$ is isomorphic  to
$\mathcal{T}_B( M_B)$ as an algebra, and
\item a unique morphism $\hat{f} \colon \mathcal{A} \to \mathcal{A}_B$ of
  unital graded algebras
\end{itemize}
satisfying the following properties:
\begin{enumerate}
\item  $\hat{f}$ is the natural morphism of unital $R$-algebras defined
uniquely by the requirements that it restricts to $f$ on
$M^{\boxtimes 0} = A$, and induces a graded bijection between the
generators of $\mathcal{A}$ and $\mathcal{A}_B$, and
\item $\hat{f}\circ \partial = \partial_B \circ \hat{f}$,
\end{enumerate}
i.e.~$\hat{f}$ is a unital DGA morphism.
\end{Lem}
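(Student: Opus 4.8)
The plan is to build everything from the universal property of the tensor algebra and the rigidity principle used already above: a morphism (resp.\ a derivation) out of a tensor algebra $\mathcal{T}_B(M_B)$ is determined by its restrictions to $M_B^{\boxtimes 0}=B$ and $M_B^{\boxtimes 1}=M_B$, and the latter restriction is in turn determined by its values on a bimodule basis. So I would first set $\mathcal{A}_B:=\mathcal{T}_B(M_B)$ with $M_B=B\boxtimes M\boxtimes B$. As noted just before the statement, $M_B$ is a finitely generated free $B$--$B$-bimodule on the basis $\{1\boxtimes c_i\boxtimes 1:c_i\in{\mathcal B}\}$; I grade it by $|1\boxtimes c_i\boxtimes 1|:=|c_i|$, which determines the grading of $\mathcal{A}_B$ and records the required graded bijection $c_i\leftrightarrow 1\boxtimes c_i\boxtimes 1$ between generators.

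To define $\hat f$, observe that the assignments $A\ni a\mapsto f(a)\in B=M_B^{\boxtimes 0}$ and $M\ni m\mapsto 1\boxtimes m\boxtimes 1\in M_B=M_B^{\boxtimes 1}$ are compatible with the bimodule structures once $A$ is made to act on $\mathcal{A}_B$ through $f$, since $1\boxtimes(a_+ma_-)\boxtimes 1=f(a_+)(1\boxtimes m\boxtimes 1)f(a_-)$ in $B\otimes_A M\otimes_A B$. Hence this data extends uniquely to a unital graded $R$-algebra morphism $\hat f$, which under the identification $c_i\leftrightarrow 1\boxtimes c_i\boxtimes 1$ is just $\hat f(a_0c_{i_1}a_1\cdots c_{i_n}a_n)=f(a_0)c_{i_1}f(a_1)\cdots c_{i_n}f(a_n)$. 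It satisfies property (1), and conversely any algebra morphism satisfying (1) must agree with it on $A$ and on the $c_i$, hence equals $\hat f$.

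Next I would define $\partial_B$ on the generators of $\mathcal{A}_B$ by $\partial_B(1\boxtimes c_i\boxtimes 1):=\hat f(\partial c_i)$. Since $\hat f$ is graded, this lies in $\mathcal{A}_B$ in degree $|c_i|-1$, so the assignment has degree $-1$; since $M_B$ is free over $B$ it extends to a bimodule morphism $M_B\to\mathcal{A}_B$, and since $\mathcal{A}_B$ is a tensor algebra it extends further, by the graded Leibniz rule together with the (forced) value $\partial_B|_B=0$, to a unique degree $-1$ derivation $\partial_B$ of $\mathcal{A}_B$; being a derivation vanishing on $B$, it is automatically a $B$--$B$-bimodule morphism. (Concretely, $\partial_B|_{M_B}=\sum_n\id_B\boxtimes\partial_n\boxtimes\id_B$ under $M_B^{\boxtimes n}\cong B\boxtimes M^{\boxtimes n}\boxtimes B$.) It then remains to check (2) and $\partial_B\circ\partial_B=0$. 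For (2), set $D:=\partial_B\circ\hat f-\hat f\circ\partial\colon\mathcal{A}\to\mathcal{A}_B$. Because $\hat f$ is an algebra map and $\partial,\partial_B$ are derivations, both composites satisfy the $\hat f$-twisted Leibniz rule $\Phi(xy)=\Phi(x)\hat f(y)+(-1)^{|x|}\hat f(x)\Phi(y)$, hence so does $D$; a map satisfying this rule is determined by its values on the algebra generators $A$ and ${\mathcal B}$, where $D(a)=\partial_B(f(a))-\hat f(0)=0$ and $D(c_i)=\partial_B(1\boxtimes c_i\boxtimes 1)-\hat f(\partial c_i)=0$. Thus $D=0$, which is (2). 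Finally, $\partial_B\circ\partial_B$ is again a derivation (the signs in its Leibniz rule cancel), it vanishes on $B$, and on generators $\partial_B(\partial_B(1\boxtimes c_i\boxtimes 1))=\partial_B(\hat f(\partial c_i))=\hat f(\partial^2 c_i)=0$ by (2) and $\partial^2=0$; hence $\partial_B\circ\partial_B=0$ and $(\mathcal{A}_B,\partial_B)$ is a DGA over $B$. Uniqueness of $\partial_B$ follows from the same rigidity, since any differential making $\hat f$ a DGA morphism is forced to take the value $\hat f(\partial c_i)$ on $\hat f(c_i)=1\boxtimes c_i\boxtimes 1$ and to vanish on $M_B^{\boxtimes 0}=B$.

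I do not expect a real obstacle here: the content is entirely the universal property of tensor algebras and the rigidity of derivations. The one point requiring care is the order of the argument --- constructing $\mathcal{A}_B$ and $\hat f$ first, then $\partial_B$ (well defined precisely because $\mathcal{A}_B$ is semifree), then property (2), and only afterwards $\partial_B\circ\partial_B=0$, which relies on (2) --- so that no step becomes circular, together with keeping the suspension convention in the grading straight so that $\partial_B$ genuinely has degree $-1$.
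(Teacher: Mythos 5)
Your proposal is correct and follows essentially the same route as the paper: define $\hat f$ by the universal property of the tensor algebra, set $\partial_B(\hat f(c)):=\hat f(\partial c)$ on generators, extend by the graded Leibniz rule, verify $\hat f\circ\partial=\partial_B\circ\hat f$ from the fact that both sides are derivations along $\hat f$, and deduce $\partial_B^2=0$ from $\partial_B^2\circ\hat f=\hat f\circ\partial^2=0$ together with the fact that everything is determined on generators. The only quibble is the parenthetical identification $M_B^{\boxtimes n}\cong B\boxtimes M^{\boxtimes n}\boxtimes B$ (the intermediate coefficients of $M_B^{\boxtimes n}$ live in $B$, not $A$, so one only has a natural map from the right-hand side), but this side remark plays no role in the argument.
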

\begin{proof}

The existence of the algebra morphism $\hat{f}$ is immediate. The
differential $\partial_B$ on $\mathcal{A}_B$ is defined on the image
$\hat{f}(m) \in M_B$ of $m \in M$ to take the value
\[ \partial_B( \hat{f}(m))=\hat{f}(\partial(m)).\] Since $\hat{f}$ is
surjective on the generators of $\mathcal{A}$ this determines
$\partial_B$ uniquely after extending $\partial_B$ using the graded
Leibniz rule. Using the Leibniz rule, the fact that $\hat{f}$ is an
algebra morphism implies that
\[\hat{f}\circ \partial = \partial_B \circ \hat{f}\]
is satisfied on all of $\mathcal{A}$.

It remains to check that $\partial_B^2=0$. Since we clearly have
\[\partial_B^2 \circ \hat{f}=\hat{f}\circ \partial^2=0,\]
the fact that $\hat{f}$ is a bijection on the generators implies
that $\partial_B^2=0$ is satisfied on all of $\mathcal{A}_B$.
\end{proof}
The following changes of coefficients was used in the construction
of the augmentation category in \cite{augcat}, and will also be
relevant in this article. Consider the unital $R$-algebra $A_n:=A
\otimes_R (\bigoplus_{i=1}^n R e_i)$, where $\bigoplus_{i=1}^n R
e_i$ has the ring structure induced by termwise multiplication,
i.e.~$e_i\cdot e_j=\delta_{ij}e_i$ and $\sum e_i=1$. The morphism $f
\colon A \to A_n$ will be the canonical morphism induced by the
above tensor product with $\bigoplus_{i=1}^n R e_i$, i.e.~for which
$f(1) = e_1+e_2+\hdots + e_n$.

\subsection{Augmentations and linearisations}
\label{ssec: augmentations and linearisations} The graded Leibniz
rule is invariant under conjugation by degree-preserving unital
algebra automorphisms. More precisely, let $\phi:
\mathcal{A}\rightarrow\mathcal{A}$ be such an automorphism; then
$\partial_\phi:=\phi^{-1} \circ \partial \circ \phi$ is a
differential and $(\mathcal{A},\partial_\phi)$ is again a
differential graded algebra. We denote by $\Pi_0$ the projection of
$\mathcal{A}$ to the zero length part $M^{\boxtimes 0} =A$. The
constant part of $\partial_\phi$ is given by $\Pi_0 \circ \phi^{-1}
\circ \partial \circ \phi$. In particular, if the map
$\varepsilon:=\Pi_0 \circ \phi^{-1}$ satisfies
$\varepsilon\circ\partial =0$, this constant term vanishes. This
motivates the definition of an augmentation:
\begin{defn}
  Let $B$ be a unital $R$-algebra together with a unital algebra
  morphism $f \colon A \to B$ of $R$-modules. An \emph{augmentation}
  of $({\mathcal A}, \partial)$ into $B$ is a unital DGA morphism
  $\varepsilon \colon {\mathcal A} \to B$ for which
  $\varepsilon|_{M^{\boxtimes 0}}=f$. Here $B$ is regarded as a differential graded
  algebra with trivial differential and concentrated in degree zero. (Therefore
$\varepsilon \circ \partial =0$.)
\end{defn} \noindent
Note that, in particular, $\varepsilon$ is a a morphism of
$A$--$A$-bimodules for the $A$--$A$-bimodule structure on $B$
induced by $f$.

In \cite{Chekanov_DGA_Legendrian}, Chekanov described a
linearisation procedure which uses an augmentation to produce a
differential on the graded $A$-module $M$. While Chekanov originally
defined linearisation for differential graded algebra over a
commutative ring, it is known that his construction works equally
well for differential graded algebras with noncommutative
coefficients. We now recall this construction.

From a differential graded algebra $({\mathcal A}, \partial)$
together with an augmentation $\varepsilon \colon {\mathcal A} \to
B$ we produce a new differential graded algebra as follows. By
applying Lemma~\ref{change of ring} to $\varepsilon|_{M^{\boxtimes
0}}=f \colon A \to B$ we obtain a differential graded algebra
$({\mathcal A}_B,\partial_B)$ with coefficients in $B$, and using
the unital DGA morphism $\hat{f} \colon {\mathcal A} \to {\mathcal
A}_B$ we define an augmentation $\varepsilon_B \colon {\mathcal A}_B
\to B$ by the requirement that $\varepsilon_B \circ
\hat{f}=\varepsilon$ holds on the generators. Using $\varepsilon_B$
we define a unital algebra automorphism $\Phi_\varepsilon \colon
{\mathcal A}_B \to {\mathcal A}_B$  determined by
\[ \Phi_\varepsilon(m)=m+\varepsilon_B(m), \quad m \in M_B.\]
We obtain a differential via the conjugation
\[\partial^\varepsilon := \Phi_\varepsilon \circ \partial_B \circ \Phi_\varepsilon^{-1}.\]
Let $\Pi_0 \colon {\mathcal A}_B \to M_B^{\boxtimes 0} = B$ be the
natural projection; then it follows that $\Pi_0 \circ
\partial_B^\varepsilon = \varepsilon_B \circ \partial_B =0$. The
differential graded algebra $(\mathcal{A}_B,\partial^\varepsilon)$
will be said to be obtained from $(\mathcal{A},\partial)$ by
\emph{developing with
  respect to the augmentation $\varepsilon$}.

The fact that $(\partial_B^\varepsilon)_0=0$ will be important in
the next section.  Using this, Equation \eqref{eq:leibnizpartial}
can be rewritten as
\begin{equation}
\label{eq:leibnizaugmented} \sum_{k+l-1=n \atop k,l>0 }
\sum_{i=0}^{k-1}
 (\sigma^{\boxtimes i}\boxtimes(\partial_B^\varepsilon)_l\boxtimes \id^{\boxtimes (k-1-i)})\circ (\partial_B^\varepsilon)_k=0,
\end{equation}
for any fixed $n>0$.

\subsection{The free n-copy DGA}
\label{sec:semisimple}

Let $(\mathcal{A},\partial)$ be a differential graded algebra. We
consider algebras $\mathcal{A}_{ij} = \mathcal{A}$ for $0 \le i,j
\le n$ and form a differential graded algebra $(\mathfrak{A}_n,
\mathfrak{d})$ where $\mathfrak{A}_n$ is, as a graded algebra, the
free product
\[ \mathfrak{A}_n:=\bigstar_{0 \le i ,j \le n}\mathcal{A}_{ij},\]
and the differential $\mathfrak{d}$ is induced by $\partial$ as
follows. If $c$ is a generator of ${\mathcal A}$, we denote by
$c^{ij}$ the generator in ${\mathfrak A}$ which corresponds to the
copy of $c$ in ${\mathcal A}_{ij}$. Then
\begin{itemize}
\item $\mathfrak{d}_0(c^{ij})= \partial_0(c) \in A$ if $i=j$, and $\mathfrak{d}_0(c^{ij})= 0$ if $i \ne j$, and
\item  the coefficient of $ a_0d_1^{i_1j_1}a_1d_2^{i_2j_2} \ldots d^{i_nj_n}_na_n$ in $\mathfrak{d}(c^{ij})$ is equal to the coefficient of\\ $a_0d_1a_1d_2 \ldots d_na_n$ in $\partial(c)$, where $d_i$ is a sequence of generators, given that $i_1=i$, $j_n=j$, and $j_{k-1} = i_k$ are satisfied, while this coefficient otherwise vanishes.
\end{itemize}
As usual, we extend $\mathfrak{d}$ to the whole algebra
$\mathfrak{A}_n$ via the graded Leibniz rule. (Since we have not
proved yet that $\mathfrak{d}^2=0$, strictly speaking,
$(\mathfrak{A}_n, \mathfrak{d})$ is only a graded algebra with a
derivation so far.) A generator $c^{ij}$ will be called \emph{mixed}
if $i \neq j$ and \emph{pure} if $i=j$. Observe that $\mathfrak{d}$
preserves the filtration of $\mathfrak{A}_n$ given by the
$R$-submodules spanned by those words containing at least a number
$m \ge 0$ of mixed generators.

A word $a_0d_1^{i_1j_1}a_1d_2^{i_2j_2} \ldots d^{i_nj_n}_na_n$ will
be called {\em composable} if $j_{k-1} = i_k$ for $2 \le k \le n$.
Words of length zero and one are automatically composable. We define
$\mathfrak{A}_n^c \subset \mathfrak{A}_n$ as the
sub-$A$--$A$-bimodule generated by composable words. It is immediate
to verify that $\mathfrak{d}$ restricts to an endomorphism of
$\mathfrak{A}_n^c$.

To prove that $\mathfrak{d}^2=0$ we use an alternative definition
using the change of coefficients. Recall the ring $A_n=A \otimes
(\bigoplus_{i=1}^n Re_i)$ from the end of Section \ref{sec:change}.
For a semi-free differential graded algebra $(\mathcal{A},\partial)$
we denote by $ ( {\mathcal A}_{A_n}, \partial_{A_n})$ the
differential graded algebra obtained by the change of coefficients
from $A$ to $A_n$ using Lemma \ref{change of ring}.

Recall that there is grading  preserving bijection between the sets
of generators of the respective algebras  $\mathcal{A}$ and $
{\mathcal A}_{A_n}$.

For a generator $c$ of $\mathcal{A}$ we denote $e_ice_j \in
 {\mathcal A}_{A_n}$ by $c^{ij}$. Note that the $c^{ij}$'s generate $ {\mathcal A}_{A_n}$,
albeit not freely. The differential $ \partial_{A_n}$ can now be
expressed as follows on any generator $c$. Given that a term
$a_0d_1a_1d_2 \ldots d_na_n$ appears in $\partial c$, the sum
$$\sum_{i_1, j_n} \sum_{j_{k-1}=i_k}a_0e_{i_1}d_1e_{j_1}a_1d_2^{i_2j_2}
\cdots d_{n-1}^{i_{n-1}j_{n-1}}a_{n-1}e_{i_n}d_n e_{j_n} a_n$$
appears in the expression $\partial_{A_n} c$. (Recall here that the
$e_i$ are in the centre of $A$.) This means that the sum
$$\sum_{j_{k-1}=i_k}e_{i_1}a_0d_1^{i_1j_1}a_1d_2^{i_2j_2}\cdots
d_{n-1}^{i_{n-1}j_{n-1}}a_{n-1}d_n^{i_nj_n}e_{j_n}a_n$$ appears in
the expression $\partial_{A_n}(c^{i_1j_n})$. More generally, we have
$\partial_{A_n}(c^{i_1j_n}) \subset e_{i_1} {\mathcal
A}_{A_n}e_{j_n}$. Since $e_ie_j=\delta_{ij}e_i$, in particular it
follows that $\partial_{A_n}c^{ij}$ has no constant term for
$i\not=j$.
\begin{Lem}
We have $\mathfrak{d}^2=0$.
\end{Lem}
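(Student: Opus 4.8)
The plan is to import $\mathfrak{d}^2=0$ from the identity $\partial_{A_n}^2=0$, which holds by Lemma~\ref{change of ring} applied to the change of coefficients $f\colon A\to A_n$. To do this I would compare $(\mathfrak{A}_n,\mathfrak{d})$ with $(\mathcal{A}_{A_n},\partial_{A_n})$ through the unital $R$-algebra morphism $\pi\colon\mathfrak{A}_n\to\mathcal{A}_{A_n}$ that sends each generator $c^{ij}$ of $\mathfrak{A}_n$ to $e_ice_j=c^{ij}\in\mathcal{A}_{A_n}$ and restricts to $f$ on the zero-length part $M^{\boxtimes 0}=A$. The key structural facts about $\pi$ are: it kills every non-composable word (the idempotents between consecutive generators collapse, $e_{j_{k-1}}a\,e_{i_k}=\delta_{j_{k-1}i_k}\,a\,e_{j_{k-1}}$); and, comparing $R$-bases of the two tensor algebras, it is injective on the sub-$A$--$A$-bimodule $\mathfrak{A}_n^c$ of composable words. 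This injectivity on $\mathfrak{A}_n^c$ is the only property of $\pi$ that the argument really uses.

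The first step is to check that $\pi$ intertwines the two differentials, $\pi\circ\mathfrak{d}=\partial_{A_n}\circ\pi$. Since $\pi$ is an algebra morphism and $\mathfrak{d}$, $\partial_{A_n}$ are both graded derivations, it suffices to verify this on the algebra generators $A$ and $\{c^{ij}\}$; on $A$ both sides vanish, and on a generator $c^{ij}$ it is a direct matching of the two descriptions of the differential recalled above — the superscript composability conditions $j_{k-1}=i_k$ on the words appearing in $\mathfrak{d}(c^{ij})$ are exactly what is left after expanding the idempotents in $\partial_{A_n}(e_ice_j)=e_i\,\partial_{A_n}(c)\,e_j$ and using $e_ie_j=\delta_{ij}e_i$.

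The second step is formal. One checks that $\mathfrak{d}^2$ is again an $A$--$A$-bimodule derivation of $\mathfrak{A}_n$ (writing out $\mathfrak{d}^2(xy)$ with the graded Leibniz rule, the two cross terms $\pm\mathfrak{d}(x)\mathfrak{d}(y)$ cancel), so it is enough to prove that $\mathfrak{d}^2$ vanishes on $A$ and on each $c^{ij}$. On $A$ this is immediate from $\mathfrak{d}|_A=0$. For $c^{ij}$, note that $c^{ij}\in\mathfrak{A}_n^c$, that every word occurring in $\mathfrak{d}(c^{ij})$ is composable, and that $\mathfrak{d}$ preserves $\mathfrak{A}_n^c$; hence $\mathfrak{d}^2(c^{ij})\in\mathfrak{A}_n^c$. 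Applying the intertwining relation twice gives
\[\pi\bigl(\mathfrak{d}^2(c^{ij})\bigr)=\partial_{A_n}^2\bigl(\pi(c^{ij})\bigr)=e_i\,\partial_{A_n}^2(c)\,e_j=0,\]
and since $\pi$ is injective on $\mathfrak{A}_n^c$ we conclude $\mathfrak{d}^2(c^{ij})=0$. Therefore $\mathfrak{d}^2=0$ on all of $\mathfrak{A}_n$.

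The main obstacle is the first step: arranging $\pi$ so that it genuinely commutes with the differentials. This is where the bookkeeping lives — one must translate the purely combinatorial definition of $\mathfrak{d}$ into the idempotent algebra of $\mathcal{A}_{A_n}$, tracking how the $e_i$ absorb the coefficients along composable words, and the zero-length (constant) terms require the most care. Once this dictionary is in place the rest is automatic; alternatively, one could bypass $\mathcal{A}_{A_n}$ and deduce $\mathfrak{d}^2(c^{ij})=0$ directly from the relation~\eqref{eq:leibnizpartial} for $\partial$, with the extra superscripts simply flowing through the composability constraints, but the change-of-coefficients route is cleaner because $\partial_{A_n}^2=0$ comes for free.
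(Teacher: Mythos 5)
Your proposal is correct and follows essentially the same route as the paper: both reduce to generators, use the algebra morphism $\pi\colon\mathfrak{A}_n\to\mathcal{A}_{A_n}$ with $\pi(c^{ij})=e_ice_j$, check $\pi\circ\mathfrak{d}=\partial_{A_n}\circ\pi$, and conclude from $\partial_{A_n}^2=0$ together with the injectivity of $\pi$ on the composable sub-bimodule $\mathfrak{A}_n^c$. The extra details you supply (why it suffices to check on generators, why $\pi$ kills non-composable words) are correct elaborations of steps the paper leaves implicit.
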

\begin{proof}
The first observation is that it is enough to prove that
$\mathfrak{d}^2(c^{ij})=0$ for every generator $c^{ij}$ of
$\mathfrak{A}_n$. There is an algebra morphism $\pi \colon
\mathfrak{A}_n \to  {\mathcal A}_{A_n}$ such that $\pi(c^{ij})= e_i
c e_j$ and $\pi(1)= e_1 + \ldots + e_n$. It is easy to check that
$\pi \circ \mathfrak{d} =  \partial_{A_n} \circ \pi$, and moreover
$\pi$ is injective on the sub-bimodule $\mathfrak{A}_n^c$ generated
by composable words. Since $\mathfrak{d}^2(c^{ij}) \in
\mathfrak{A}_n^c$ for every generator $c^{ij}$, from $
\partial_{A_n}^2=0$ it follows that $\mathfrak{d}^2=0$.
\end{proof}

From a sequence
$\boldsymbol{\varepsilon}=(\varepsilon_0,\ldots,\varepsilon_n)$ of
augmentations $\varepsilon_i \colon {\mathcal A} \to A$ we define an
algebra morphism $\mathfrak{e} \colon \mathfrak{A}_n \to A$ such
that, on a generator $c^{ij}$ of ${\mathcal A}_{ij}$,
\[\mathfrak{e} (c^{ij})= \begin{cases} { \varepsilon_i(c)} & \text{if } i=j, \text{ and }\\
0 & \text{if } i \ne j.\end{cases}\]
\begin{Lem}
The algebra morphism $\mathfrak{e} \colon \mathfrak{A}_n \to A$ is
an augmentation.
\end{Lem}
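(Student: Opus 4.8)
The statement to prove is that $\mathfrak{e} \colon \mathfrak{A}_n \to A$ is an augmentation, i.e.\ that it is a unital DGA morphism satisfying $\mathfrak{e}\circ\mathfrak{d} = 0$ (with $A$ carrying the trivial differential). Since $\mathfrak{e}$ is by construction a unital algebra morphism, the only thing to check is $\mathfrak{e}\circ\mathfrak{d}=0$, and since $\mathfrak{e}$ and $\mathfrak{d}$ are both determined on the free algebra $\mathfrak{A}_n$ by their behaviour on generators (using the Leibniz rule for $\mathfrak{d}$, and multiplicativity for $\mathfrak{e}$), it suffices to verify $\mathfrak{e}(\mathfrak{d}(c^{ij}))=0$ for every generator $c^{ij}$ of $\mathfrak{A}_n$.

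The cleanest route is to exploit the morphism $\pi \colon \mathfrak{A}_n \to \mathcal{A}_{A_n}$ already introduced in the proof of the preceding lemma, which satisfies $\pi(c^{ij}) = e_i c e_j$ and $\pi \circ \mathfrak{d} = \partial_{A_n}\circ\pi$. The idea is to factor $\mathfrak{e}$ through $\pi$. First I would assemble the sequence $\boldsymbol\varepsilon = (\varepsilon_1,\dots,\varepsilon_n)$ into a single unital algebra morphism $E \colon \mathcal{A}_{A_n} \to A$: recall $\mathcal{A}_{A_n}$ has coefficient ring $A_n = A\otimes_R(\bigoplus_{i=1}^n Re_i)$, and I define $E$ on the coefficient ring by $E(a\otimes e_i) = \varepsilon_i(a\cdot 1)$ — wait, more precisely, on a generator $c^{ij}=e_i c e_j$ of $\mathcal{A}_{A_n}$ set $E(e_i c e_j) = \varepsilon_i(c)$ if $i=j$ and $0$ otherwise, and on the coefficient ring $E(e_i)=\varepsilon_i(1)$, extended multiplicatively. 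One must check this is well defined (the $c^{ij}$ generate $\mathcal{A}_{A_n}$ non-freely, so $E$ must respect the relations $e_i e_j = \delta_{ij} e_i$ and the bimodule relations $e_k\cdot(e_i c e_j) = \delta_{ki} e_i c e_j$, etc.), but this is immediate from the definitions since each $\varepsilon_i$ is an algebra morphism and $\varepsilon_i(e_k c) = \varepsilon_i(1)\varepsilon_i(\dots)$ matches. Then by construction $E \circ \pi = \mathfrak{e}$ on generators, hence on all of $\mathfrak{A}_n$ since all three maps are algebra morphisms.

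Given the factorisation $\mathfrak{e} = E\circ\pi$, I would compute
\[
\mathfrak{e}\circ\mathfrak{d} = E\circ\pi\circ\mathfrak{d} = E\circ\partial_{A_n}\circ\pi,
\]
so it remains to show $E\circ\partial_{A_n} = 0$, i.e.\ that $E\colon \mathcal{A}_{A_n}\to A$ is itself an augmentation of $(\mathcal{A}_{A_n},\partial_{A_n})$. For this I would use the universal property from Lemma~\ref{change of ring}: the augmentation $\varepsilon_i\colon\mathcal{A}\to A$ together with the change-of-coefficients morphism $\hat f\colon \mathcal{A}\to\mathcal{A}_{A_n}$ gives rise, via $\varepsilon_{A_n,i}\circ\hat f = \varepsilon_i$, to DGA morphisms on the subalgebra $e_i\mathcal{A}_{A_n}e_i$; and $E$ restricted to the subalgebra generated by the pure generators $c^{ii}$ agrees with such a morphism, while $E$ kills all mixed generators. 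Concretely, since $\partial_{A_n}(c^{i_1 j_n}) \subset e_{i_1}\mathcal{A}_{A_n}e_{j_n}$ and every term appearing there is a composable word $a_0 d_1^{i_1 j_1}a_1\cdots d_n^{i_n j_n}a_n$, applying $E$ to such a term gives $0$ unless all of $i_1=j_1=i_2=\dots=j_n$, in which case the term is a pure word in $\mathcal{A}_{i i}$ and $E$ evaluates it as $\varepsilon_i$ applied to the corresponding term of $\partial c$; summing over all such terms yields $\varepsilon_i(\partial c) = 0$. So $E\circ\partial_{A_n}=0$, completing the proof.

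**Main obstacle.** The genuinely fiddly point is the well-definedness and existence of the auxiliary morphism $E$ (equivalently, the direct verification that $E$ respects the non-free presentation of $\mathcal{A}_{A_n}$ by the $c^{ij}$), together with the bookkeeping that $E$ applied to a composable but non-pure word vanishes. Everything else — unitality, multiplicativity, reduction to generators — is formal. If one prefers to avoid introducing $E$, the alternative is a direct computation: apply $\mathfrak{e}$ to the defining expansion of $\mathfrak{d}(c^{ij})$, observe that any word with a mixed generator is killed, and that for a word with a generator $d^{kl}$ with $k\neq l$ appearing inside, composability forces it to sit between other mixed generators, so again it is killed — the surviving terms being exactly the all-pure ones on the single index $i=j$, which reassemble into $\varepsilon_i(\partial c)=0$. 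The factorisation through $\pi$ is cleaner and reuses machinery already in place, so that is the route I would present.
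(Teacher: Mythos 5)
Your fallback argument (the last paragraph) is exactly the paper's proof and is correct: $\mathfrak{e}$ kills every word containing a mixed generator, composability of the words in $\mathfrak{d}(c^{ij})$ forces the surviving terms to be the all-pure ones with every index equal to $i=j$, and these reassemble into $\varepsilon_i(\partial c)=0$. Had you presented that route, there would be nothing to add.

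The route you actually commit to, however, has a genuine gap: the map $E\colon \mathcal{A}_{A_n}\to A$ you introduce does not exist as a unital algebra morphism, and the well-definedness you call ``immediate'' in fact fails. The coefficient ring $A_n=A\otimes_R(\bigoplus_i Re_i)$ contains the complete system of orthogonal idempotents $e_1,\dots,e_n$ with $\sum_i e_i=1$, and you send each $e_i$ to $\varepsilon_i(1)=1$; then $E(e_ie_j)=E(0)=0$ while $E(e_i)E(e_j)=1$ for $i\neq j$, so $E$ is not multiplicative (equivalently, $E(1)=E(\sum e_i)$ would have to be $n$, not $1$). The same obstruction reappears one level up: $E\bigl((e_1ce_1)(e_2c'e_2)\bigr)=E(0)=0$ whereas $\mathfrak{e}(c^{11}c'^{22})=\varepsilon_1(c)\varepsilon_2(c')$, so the identity $E\circ\pi=\mathfrak{e}$ cannot hold on all of $\mathfrak{A}_n$ no matter how $E$ is defined --- it holds only on the sub-bimodule $\mathfrak{A}_n^c$ of composable words, which is precisely where $\pi$ is injective. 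One can salvage the idea by working only with composable words (since $\mathfrak{d}(c^{ij})\in\mathfrak{A}_n^c$), but then verifying that the relevant linear functional annihilates $\partial_{A_n}(e_ice_j)$ is exactly the same index-chasing as the direct computation, so the factorisation buys nothing and introduces a false intermediate claim. Drop $E$ and present the direct computation.
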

\begin{proof}
It suffices to check that $\mathfrak{e}(\mathfrak{d}(c^{ij}))=0$
holds on the generators. By construction, we have
\[\mathfrak{e} (\mathfrak{d}(c^{ij}))= \begin{cases}
\varepsilon_i(\partial(c)) & \text{if } i = j, \text{ and } \\
0 & \text{if  } i \neq j, \end{cases}\] which establishes the claim.
\end{proof}

We can use the augmentation $\mathfrak{e}$ to produce a differential
graded algebra $(\mathfrak{A}_n, \mathfrak{d}^{\mathfrak{e}})$ whose
differential has vanishing constant term by applying the procedure
described in the previous subsection.

\section{$A_\infty$-operations}
\subsection{Case I: coefficients in a general noncommutative
algebra} \label{subsec:case1} Let $(\mathcal{A},\partial)$ be a
differential graded algebra with coefficients in a noncommutative
algebra $A$ over a commutative ring $R$. We further assume that
${\mathcal A} =\mathcal{T}_A(M)$ is a tensor algebra over a free
$A$--$A$-bimodule $M$ with a preferred basis $\{c_1,\hdots, c_k\}$.

Recall that we decompose $\partial|_{M} = \partial_0 + \partial_1 +
\hdots $, where $\partial_n \colon M \to M^{\boxtimes n}$. We start
without the assumption that $\partial_0=0$. There are induced
adjoints $(\partial_n)^\vee \colon (M^{\boxtimes n})^\vee \to
M^\vee$, and we define
\begin{equation} \label{definition of mu_n}
\mu_n := (\partial_n)^{\vee} \circ \psi_n \colon (M^\vee)^{\otimes
n} \to M^\vee, \quad n \ge 1,
\end{equation}
where $\psi_n$ is as defined in Section \ref{sec:bimoduledual}
above. See Diagram \eqref{mubar1}.
\begin{gather}
\xymatrix{ M^\vee & (M^{\boxtimes n})^\vee \ar[l]_{(\partial_n)^\vee} \\
M^\vee \ar@{=}[u]^{\psi_1=\id_{M^\vee}} & \ar[l]^{\mu_n}
\ar[u]_{\psi_n} (M^\vee)^{\otimes n} }\label{mubar1}
\end{gather}
Given elements $m_1, \ldots, m_n \in M^\vee$, we will write
$\mu_n(m_1, \ldots, m_n)$ and $\mu_n(m_1 \otimes \ldots \otimes
m_n)$ interchangeably.

The operations $\mu_n$ can be expressed more concretely as follows.
\begin{Lem}\label{explicit computation I}
If, for every element $c_i$ in the basis of $M$,
\[\partial_nc_i=\sum_I  \sum_{j=1}^{m_{i,I}}a^{i,I}_{j,0}c_{i_1}a^{i,I}_{j,1}
\ldots c_{i_n}a^{i,I}_{j,n}\] with $a^{i,I}_{j,l} \in A$, and
$I=(i_1,\hdots,i_n)$ denoting a multi-index with $1 \le i_l \le k$,
then
\[ \mu_n(b_1c_{i_1},\hdots,b_nc_{i_n})=\sum_{i=1}^k \sum_{j=1}^{m_{i,I}}
(a^{i,I}_{j,0}b_1a^{i,I}_{j,1}\ldots
a^{i,I}_{j,n-1}b_na^{i,I}_{j,n})\cdot c_i\] for each $n \ge 1$ and
any elements $b_i \in A$.
\end{Lem}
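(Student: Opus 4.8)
The plan is to unwind the three maps that compose to give $\mu_n$ — namely $\psi_n$, then $(\partial_n)^\vee$, and finally the identification \eqref{pippo} of $M^\vee$ with a free $A$-module on the basis ${\mathcal B}$ — and simply track where the generator $b_1c_{i_1}\otimes\cdots\otimes b_nc_{i_n}$ goes at each stage. First I would apply $\psi_n$ using its defining formula \eqref{pluto}: the element $b_1c_{i_1}\otimes\cdots\otimes b_nc_{i_n} \in (M^\vee)^{\otimes n}$, where each $b_lc_{i_l}$ is viewed as the bimodule functional $c_{i_l}^\vee$ rescaled appropriately (i.e.\ the element of $M^\vee$ sending $d \mapsto 0$ for $d \ne c_{i_l}$ a basis element and sending $c_{i_l} \mapsto b_l$ under the identification \eqref{pippo} — more precisely the functional $\varphi$ with $\varphi(c_{i_l}) = b_l$ is not quite a bimodule map, so the correct reading is that $b_lc_{i_l}$ corresponds to $\varphi_l \in M^\vee$ with $\varphi_l(a_+c_{i_l}a_-) = a_+ b_l a_-$... ), becomes the functional on $M^{\boxtimes n}$ which on $a_0 d_1 a_1 \cdots d_n a_n$ returns $\varphi_1(a_0 d_1 a_1)\cdots$ suitably balanced, i.e.\ is nonzero only when $d_l = c_{i_l}$ for all $l$.

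The second step is to apply $(\partial_n)^\vee$, the adjoint of $\partial_n\colon M \to M^{\boxtimes n}$. By definition of the bimodule adjoint, $(\partial_n)^\vee(\psi_n(b_1c_{i_1}\otimes\cdots))$ is the functional $M \to A$ sending a basis element $c_i$ to $\psi_n(\cdots)(\partial_n c_i)$. Plugging in the given expansion $\partial_n c_i = \sum_I \sum_{j=1}^{m_{i,I}} a^{i,I}_{j,0}c_{i_1}a^{i,I}_{j,1}\cdots c_{i_n}a^{i,I}_{j,n}$ and using that $\psi_n(\cdots)$ only detects the summands whose generator-sequence is exactly $(c_{i_1},\dots,c_{i_n})$ — i.e.\ those indexed by the fixed multi-index $I=(i_1,\dots,i_n)$ — one reads off that $c_i \mapsto \sum_{j=1}^{m_{i,I}} a^{i,I}_{j,0}\,b_1\,a^{i,I}_{j,1}\cdots a^{i,I}_{j,n-1}\,b_n\,a^{i,I}_{j,n} \in A$. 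Finally, re-expressing this functional as an element of $M^\vee \cong \bigoplus_{c\in{\mathcal B}} A$ via \eqref{pippo}, i.e.\ as $\sum_{i=1}^k \big(\text{that coefficient}\big)\cdot c_i$, yields exactly the claimed formula. So the proof is essentially a diagram chase through Diagram~\eqref{mubar1} combined with the explicit formulas \eqref{pluto} and \eqref{pippo}.

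The main obstacle — or rather the one point requiring genuine care rather than bookkeeping — is keeping the left/right $A$-actions straight when passing through $\psi_n$ and the identification \eqref{pippo}: an element $bc_i \in M^\vee$ is \emph{not} the functional $\varphi$ with $\varphi(c_i)=b$ unless one is careful, because \eqref{pippo} sends $\varphi \mapsto \sum_c \varphi(c)c$, so $bc_i$ corresponds to the \emph{bimodule} map $\varphi$ determined by $\varphi(c_i)=b$ and $\varphi(c_{i'})=0$ for $i'\ne i$, extended by $\varphi(a_+ c_i a_-) = a_+ b a_-$; one must check this is consistent with the balanced-tensor formula \eqref{pluto} so that the $a$'s interleave with the $b$'s in precisely the order $a_0 b_1 a_1 \cdots b_n a_n$ and not in some transposed order. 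I would therefore spell out the $n=1$ case first to fix conventions, noting that $\mu_1 = \psi_1\circ\cdots$ with $\psi_1 = \id$ so $\mu_1 = (\partial_1)^\vee$ and the formula reduces to $\mu_1(bc_i) = \sum_{i'}(\text{coeff of } c_{i'} \text{ in } \partial_1 c_{i'}\text{-type terms})$, and then observe the general case follows by the identical computation with $n$ tensor factors, the only new feature being the appearance of the intermediate coefficients $a^{i,I}_{j,1},\dots,a^{i,I}_{j,n-1}$ sandwiched between consecutive $b_l$'s, which is exactly what \eqref{pluto} produces on the balanced tensor product. No sign issues arise since $\mu_n$ as defined here carries no Koszul signs.
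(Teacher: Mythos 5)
Your proposal is correct and takes essentially the same route as the paper's proof: one computes $\psi_n(b_1c_{i_1}\otimes\cdots\otimes b_nc_{i_n})$ on a word $a_0c_{j_1}a_1\cdots c_{j_n}a_n$ via \eqref{pluto} and the identification \eqref{pippo}, obtaining $a_0b_1a_1\cdots b_na_n$ when the generators match and $0$ otherwise, and then evaluates the adjoint on $\partial_n c_i$ and reads the result back through \eqref{pippo}. The only difference is that you make explicit the bimodule-extension convention ($b_lc_{i_l}$ is the bimodule functional sending $a_+c_{i_l}a_-$ to $a_+b_la_-$), which the paper leaves implicit.
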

\begin{proof}
From Equations \eqref{pippo} and \eqref{pluto} it follows that
\begin{align*}
&\psi_n(b_1c_{i_1}\otimes \hdots \otimes b_nc_{i_n}) (a_0 c_{j_1}a_1
\ldots a_{n-1} c_{j_n} a_n )=\\&=\begin{cases} a_0 b_1 a_1 \ldots
a_{n-1}b_n a_n, & \text{if } c_{i_1}=
c_{j_1}, \ldots, c_{i_n}=c_{j_n}, \\
0, & \text{otherwise}.
\end{cases}
\end{align*}
Then from
$$\mu_n(b_1c_{i_1},\hdots,b_nc_{i_n})(c_i)=
\psi_n(b_1c_{i_1}\otimes \hdots \otimes b_nc_{i_n})(\partial_n
c_i)$$ for all basis elements $c_i$, and from Equation \eqref{pippo}
again, the lemma follows.
\end{proof}
\begin{Rem} The maps $\mu_n$ are $R$-multilinear, but do not satisfy any form
of $A$-linearity in general.
\end{Rem}

The maps $(\mu_i)_{i\geq 1}$ do not necessarily satisfy the
$A_\infty$ relations because the curvature term
$\mu_0:=(\partial_0)^\vee \colon A \to M^\vee$ might be
non-vanishing. Given augmentations of $\mathcal{A}$, this can be
amended.

To an augmentation $\varepsilon \colon {\mathcal A} \to A$ we
associate the element
\[\varepsilon^\vee:=\varepsilon(c_1)c_1+\hdots+\varepsilon(c_k)c_k \in M^\vee,\]
i.e.~the adjoint of $\varepsilon|_M$.
\begin{Rem}
We should think of $\varepsilon^\vee$ as giving rise to a ``bounding
cochain'' in the sense of \cite{fooo} via the infinite sum
\[ \sum_{i=0}^\infty (\varepsilon^\vee)^{\otimes i} \]
living in the completion $\prod_{i=0}^\infty (M^\vee)^{\otimes i}$
of $\bigoplus_{i=0}^\infty (M^\vee)^{\otimes i}$, where
$(\varepsilon^\vee)^{\otimes 0}:=1 \in R$.
\end{Rem}

\begin{defn}\label{defn: A_infty operations case I}
 For a sequence
$\boldsymbol{\varepsilon}=(\varepsilon_0,\varepsilon_1,\hdots,\varepsilon_n)$
of augmentations $ \epsilon_i \colon {\mathcal A} \to A$, we define
the operations
\[\mu^{\boldsymbol{\varepsilon}}_n \colon (M^\vee)^{\otimes n} \to M^\vee\]
via the formulas
\begin{eqnarray*}
\lefteqn{\mu^{\boldsymbol{\varepsilon}}_n(m_1, \ldots, m_n) =}\\
&=&\sum_{i=1}^\infty \sum_{i_0+\hdots+i_n+n=i \atop i_j \ge 0}
\mu_i( (\varepsilon_0^\vee)^{\otimes i_0} \otimes m_1 \otimes
(\varepsilon_1^\vee)^{\otimes i_1} \otimes \hdots\otimes
(\varepsilon_{n-1}^\vee)^{\otimes i_{n-1}} \otimes m_n\otimes
(\varepsilon_{n}^\vee)^{\otimes i_{n}}),
\end{eqnarray*}
where $m_i \in M^{\vee}$  (recall that $(\varepsilon_0^\vee)^{
\otimes 0}=1$ is considered as an element in $R$).
\end{defn}
See Figure \ref{fig:curve} for a geometric explanation of the terms
appearing in $\mu^{\boldsymbol{\varepsilon}}_n(m_1, \ldots, m_n)$.

Given a $n$-tuple of augmentation
$\boldsymbol{\varepsilon}=(\varepsilon_1,\ldots,\varepsilon_n)$ we
denote by $\boldsymbol{\varepsilon}_{ij}=(\varepsilon_i, \ldots,
\varepsilon_j)$ and
$\hat{\boldsymbol{\varepsilon}}_{ij}=(\varepsilon_1, \ldots,
\varepsilon_i, \varepsilon_j, \ldots, \varepsilon_n)$.
\begin{figure}[ht]
\labellist \pinlabel $d_0$ at 100 122 \pinlabel $d_1$ at 10 -6
\pinlabel $d_2$ at 55 -6 \pinlabel $d_3$ at 101 -6 \pinlabel $d_4$
at 145 -6 \pinlabel $d_5$ at 190 -6 \pinlabel $a_0$ at 33 65
\pinlabel $a_1$ at 32 14 \pinlabel $a_2$ at 78 14 \pinlabel $a_3$ at
122 14 \pinlabel $a_4$ at 168 14 \pinlabel $a_5$ at 167 65
\endlabellist
  \centering
  \includegraphics{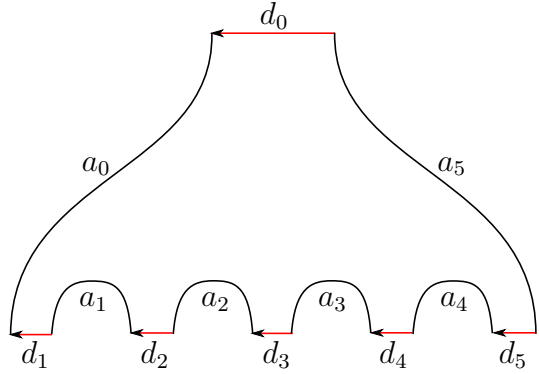}
  \vspace{5mm}
  \caption{The ``pseudoholomorphic disk with punctures'' shown here is
supposed to give a contribution of $+1$ to the coefficient in front
of $a_0d_1a_1d_2a_2d_3a_3d_4a_4d_5a_5$ in the expression
$\partial_5(d_0)$. If the generators $d_1,\hdots,d_5$ are all
distinct, this pseudoholomorphic disc gives a contribution of
$a_0\varepsilon_0(d_1)a_1xa_2\varepsilon_1(d_3)
a_3ya_4\varepsilon_2(d_4)a_5 \in A$ to the coefficient in front of
$d_0$ in the expression of
$\mu^{(\varepsilon_0,\varepsilon_1,\varepsilon_2)}_2(xd_2,yd_4)$.}
  \label{fig:curve}
\end{figure}

\begin{Thm}
\label{thm:case1} For any $n \ge 1$ and fixed sequence
$\boldsymbol{\varepsilon}=(\varepsilon_0,\varepsilon_1,\hdots,\varepsilon_n)$
of augmentations, the above $R$-module morphisms satisfy the
relations
\begin{eqnarray*}
\lefteqn{0=\sum_{n=k-1+l \atop k,l > 0}\sum_{i=1}^k (-1)^\dagger
\mu^{\hat{\boldsymbol{\varepsilon}}_{i-1,i+l-1}}_k
(m_1,\hdots,m_{i-1},}\\
& &
\mu^{\boldsymbol{\varepsilon}_{i-1,i+l-1}}_l(m_i,\hdots,m_{i+l-1}),
m_{i+l}, \hdots,m_n),
\end{eqnarray*}
where $\dagger=|m_1|+\cdots+|m_{i-1}|+(i-1)$, and $m_1,\hdots,m_n
\in M^\vee$ are homogeneous elements. In other words, the above
operations form the morphisms and higher operations of an
$A_\infty$-category over $R$ whose objects consist of the
augmentations $\varepsilon \colon \mathcal{A} \to A$.
\end{Thm}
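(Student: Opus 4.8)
The plan is to deduce the relations from the single identity $\partial\circ\partial=0$ in two moves: a \emph{dualisation lemma}, to the effect that dualising a semifree DGA with vanishing constant part yields an uncurved $A_\infty$-algebra; and a \emph{reduction} of the general, multi-augmentation situation to that case, by means of the free $n$-copy DGA of Subsection~\ref{sec:semisimple} together with the developing construction. Conceptually, $\varepsilon^\vee$ is a Maurer--Cartan element of the (possibly curved) $A_\infty$-structure $(M^\vee,\{\mu_n\}_{n\ge0})$ --- precisely because $\varepsilon\circ\partial=0$, since one computes $\mu_n(\varepsilon^\vee,\ldots,\varepsilon^\vee)=\varepsilon\circ\partial_n|_M$ and these sum over $n$ to $\varepsilon\circ\partial|_M$ --- and $\mu^{\boldsymbol\varepsilon}_\bullet$ is the twist of $\{\mu_n\}$ by such elements; I phrase the argument concretely below.

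\textbf{Step 1 (dualisation).} Let $(\mathcal A',\partial')$ be a semifree DGA with $\mathcal A'=\mathcal T_{A'}(M')$, with $M'$ free, finitely generated over a preferred basis, and with $\partial'_0=0$; set $\mu'_n:=(\partial'_n)^\vee\circ\psi_n$ as in \eqref{definition of mu_n}. I claim the $\mu'_n$ satisfy the strict $A_\infty$ relations. Since the arguments $\beta_i\in(M')^\vee$ are bimodule morphisms, evaluating on a basis element $c$ shows that the $(k,l,i)$-summand $\mu'_k(m_1,\ldots,\mu'_l(m_i,\ldots,m_{i+l-1}),\ldots,m_N)(c)$ equals the evaluation of $\psi_N(m_1\otimes\cdots\otimes m_N)$ on $(\id^{\boxtimes(i-1)}\boxtimes\partial'_l\boxtimes\id^{\boxtimes(k-i)})(\partial'_kc)$, the inner $\boxtimes$'s being the sign-free ones. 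On a surviving term the generator $e_t$ sitting under $m_t$ satisfies $|m_t|=|e_t|+1$, because $(M')^\vee$ carries the \emph{suspension} of the grading of $M'$; hence multiplication by the sign $(-1)^\dagger$, with $\dagger=|m_1|+\cdots+|m_{i-1}|+(i-1)$, has the same effect as the twist $\sigma^{\boxtimes(i-1)}$ acting on the first $i-1$ letters. Summing over $k+l-1=N$, $k,l>0$, $1\le i\le k$, the $A_\infty$ expression becomes the evaluation of $\psi_N(m_1\otimes\cdots\otimes m_N)$ on the left-hand side of \eqref{eq:leibnizaugmented} for $n=N$ (applicable since $\partial'_0=0$), which vanishes.

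\textbf{Step 2 (reduction and identification).} Fix $\boldsymbol\varepsilon=(\varepsilon_0,\ldots,\varepsilon_n)$ and form the free $n$-copy DGA $(\mathfrak A_n,\mathfrak d)$, which is $\mathcal T_A(\widehat M)$ for the free bimodule $\widehat M=\bigoplus_{0\le i,j\le n}M_{ij}$ on the generators $c^{ij}$, and recall $\mathfrak d^2=0$. The tuple $\boldsymbol\varepsilon$ gives the augmentation $\mathfrak e\colon\mathfrak A_n\to A$ with $\mathfrak e(c^{ii})=\varepsilon_i(c)$ and $\mathfrak e(c^{ij})=0$ for $i\ne j$; developing along $\mathfrak e$ yields $(\mathfrak A_n,\mathfrak d^{\mathfrak e})$ with $(\mathfrak d^{\mathfrak e})_0=0$, so Step~1 applies and the operations $\widehat\mu_k:=((\mathfrak d^{\mathfrak e})_k)^\vee\circ\psi_k$ on $\widehat M^\vee=\bigoplus_{ij}M^\vee_{ij}$ satisfy the strict $A_\infty$ relations. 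As $\mathfrak d$ annihilates the zero-length part, $\mathfrak d^{\mathfrak e}=\Phi_{\mathfrak e}\circ\mathfrak d$ on generators, and expanding $\Phi_{\mathfrak e}$ as an algebra map replaces, in each word of $\mathfrak d(c^{pq})$, an arbitrary subset of the letters $d^{ab}$ by $\mathfrak e(d^{ab})$, which vanishes unless $a=b$ and then equals $\varepsilon_a(d)$. Comparing with Definition~\ref{defn: A_infty operations case I}, the explicit formula of Lemma~\ref{explicit computation I}, and the identity $\varepsilon^\vee=\varepsilon|_M$ coming from \eqref{pippo}, one obtains the identification: for inputs $m_t\in M^\vee_{i_{t-1}i_t}$, $t=1,\ldots,k$, along a composable chain $i_0,\ldots,i_k$, the element $\widehat\mu_k(m_1,\ldots,m_k)$ lies in $M^\vee_{i_0i_k}$ and, under $M^\vee_{ab}\cong M^\vee$, equals $\mu^{(\varepsilon_{i_0},\ldots,\varepsilon_{i_k})}_k(m_1,\ldots,m_k)$, while the output vanishes if the chain is not composable. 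Now read off the $M^\vee_{0,n}$-component of the strict $A_\infty$ relation for $\{\widehat\mu_k\}$ on the inputs $m_t\in M^\vee_{t-1,t}$, $t=1,\ldots,n$: every term lands in $M^\vee_{0,n}$, and in the summand indexed by $k+l-1=n$ and $i$ the inner operation sends the composable block $m_i,\ldots,m_{i+l-1}$ into $M^\vee_{i-1,i+l-1}$ and becomes $\mu^{\boldsymbol\varepsilon_{i-1,i+l-1}}_l(m_i,\ldots,m_{i+l-1})$, whereas the outer operation, applied to the still composable chain $0,\ldots,i-1,i+l-1,\ldots,n$, lands in $M^\vee_{0,n}$ and becomes $\mu^{\hat{\boldsymbol\varepsilon}_{i-1,i+l-1}}_k$ of the remaining arguments; the signs $(-1)^\dagger$ are unchanged, since the grading of $M^\vee_{ab}$ agrees with that of $M^\vee$. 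Thus this component is, term by term, the asserted relation for $\{\mu^{\boldsymbol\varepsilon}_\bullet\}$, and it is zero. Since every finite sequence of augmentations is treated this way, the $\mu^{\boldsymbol\varepsilon}_\bullet$ are the structure maps of an $A_\infty$-category over $R$ whose objects are the augmentations $\varepsilon\colon\mathcal A\to A$.

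\textbf{Expected main difficulty.} No individual computation is deep, but two points need genuine care. First, in Step~1, matching the hand-inserted signs $(-1)^\dagger$ with the twists $\sigma^{\boxtimes i}$ of \eqref{eq:leibnizaugmented} --- the familiar reconciliation of suspension/Koszul signs under the passage from $\partial'$ to its dual. Second, in Step~2, the index bookkeeping: checking that composable-chain inputs are preserved by all the operations involved, and that the colourings $\boldsymbol\varepsilon_{i-1,i+l-1}$ and $\hat{\boldsymbol\varepsilon}_{i-1,i+l-1}$ emerge from the $n$-copy construction with exactly the indices appearing in the statement.
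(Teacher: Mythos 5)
Your proposal is correct and follows essentially the same route as the paper: the $A_\infty$ relations are first established for the dualised operations of a DGA with vanishing constant part via Equation~\eqref{eq:leibnizaugmented} (with the sign $\dagger$ accounted for by the suspension convention), and the general case is reduced to this one by forming the free $n$-copy DGA $(\mathfrak{A}_n,\mathfrak{d})$, developing along the induced augmentation $\mathfrak{e}$, and identifying the twisted operations $\mu^{\boldsymbol{\varepsilon}}_\bullet$ with the plain dual operations of $(\mathfrak{A}_n,\mathfrak{d}^{\mathfrak{e}})$ on composable chains. The only cosmetic difference is that you fold the paper's intermediate single-augmentation step directly into the $n$-copy reduction, and you spell out the composability bookkeeping in more detail than the paper does.
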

\begin{proof}
First we handle the case when all augmentations are equal and
trivial (i.e.~sending all generators to zero). In this case the
$A_\infty$-relations readily follow from Formula
\eqref{eq:leibnizaugmented}, which is satisfied when $\partial_0=0$.
Also, see the following diagram.
\begin{gather}
\xymatrix@C=10pc{ (M^{\boxtimes k})^\vee  & (M^{\boxtimes
(k+l)})^\vee \ar[l]_{(\sum \sigma\boxtimes
\partial_l\boxtimes \id)^\vee} \\
(M^\vee)^{\otimes k} \ar[u]^{\psi_k} & \ar[l]^{\sum \sigma\otimes
\mu_l\otimes \id} \ar[u]_{\psi_{k+l}} (M^\vee)^{\otimes (k+l)}.
}\label{mubar2}
\end{gather}

The general case can now be reduced to the above case in the
following manner. First, given a sequence
$\boldsymbol{\varepsilon}=(\varepsilon,\hdots,\varepsilon)$
consisting of a single augmentation, we compute that
$(\mu^{\boldsymbol{\varepsilon}}_i)_{i\geq 1}$ associated to
$(\mathcal{A},\partial)$ are equal to the morphisms $(\mu_i)_{i\geq
1}$ associated to the DGA $(\mathcal{A},\partial^\varepsilon)$
defined in Subsection \ref{ssec: augmentations and linearisations}.
This case thus follows from the above.

Finally, for an arbitrary sequence
$\boldsymbol{\varepsilon}=(\varepsilon_0,\hdots, \varepsilon_n)$ of
augmentations we apply the construction in Section
\ref{sec:semisimple}. Namely, we produce the auxiliary
``semisimple'' differential graded algebra
$(\mathfrak{A}_n,\mathfrak{d})$ and the auxiliary augmentation
$\mathfrak{e} \colon \mathfrak{A}_n \to A$ induced by
$\boldsymbol{\varepsilon}$. Using the notation in Section
\ref{sec:semisimple}, it can be seen that
\[ \mu_n^{\boldsymbol{\varepsilon}}(a_1d_1,\ldots, a_nd_n)= \mu_n^{\mathfrak{e}}
(a_1d_1^{i_1j_1},\ldots, a_nd_n^{i_nj_n}) \] (after identifying the
output with an element of $\mathcal{A}_{i_1j_n} = \mathcal{A}$),
where $i_k < j_k$ and $j_k=i_{k+1}$ holds for all indices $k$, and
$d_i$ is a sequence of basis elements. We have thus managed to
reduce the general case to the first case.

For the sign $\dagger$ it suffices to notice that $(\sigma)^\vee= -
\sigma$ according to sign the convention of Section
\ref{sec:bimoduledual}.
\end{proof}

\subsection{Case II: coefficients in a Hermitian algebra}
\label{subsec:case2} Let $(\mathcal{A}, \partial)$ be a differential
graded algebra with coefficients in  a noncommutative algebra $A$
over a commutative ring $R$. As in the previous subsection, we
assume that  ${\mathcal A}=\mathcal{T}_A(M)$ is a  tensor algebra
over an free $A$--$A$-bimodule $M$ with a preferred basis
$\{c_1,\hdots, c_k\}$. In this subsection we make the assumption
that $A$ is a Hermitian algebra. Recall that there are induced
inclusions $\iota^{(n)} \colon M^{\boxtimes n} \to (M^{\boxtimes
n})^*$ for each $n\ge 0$ induced by the basis on $M$ and by the
bilinear form $\mathfrak{t}$ on $A$.

We define the $A$--$A$-bimodule morphisms
\[ \mu_n := (\partial_n)^* \colon (M^{\boxtimes n})^* \to (M)^*\] for
each $n \ge 1$. In view of Proposition \ref{prp:mainprop} these
morphisms restrict to morphisms $\mu_n := (\partial_n)^* \colon
M^{\boxtimes n} \to M$ under the above inclusions. However, since
$\partial_0$ is not assumed to be zero, these operations might not
give rise to an $A_\infty$ structure in the strict sense. We now
proceed to amend this.

Using Lemma \ref{lem:mainformula}, the operations $\mu_n$ can in
this case be expressed more concretely as follows.
\begin{Lem}\label{explicit computation II}
If, for every element $c_i$ in the basis of $M$,
\[\partial_nc_i=\sum_I  \sum_{j=1}^{m_{i,I}}a^{i,I}_{j,0}c_{i_1}a^{i,I}_{j,1}
\ldots c_{i_n}a^{i,I}_{j,n}\] with $a^{i,I}_{j,l} \in A$, and
$I=(i_1,\hdots,i_n)$ denoting a multi-index with $1 \le i_l \le k$,
then
\begin{eqnarray*}
\lefteqn{ \mu_n(b_0c_{i_1}b_1\hdots b_{n-1}c_{i_n}b_n)=}\\
&=& \sum_{i=1}^k \sum_{j=1}^{m_{i,I}} \langle c_{i_1} b_1\hdots
b_{n-1}c_{i_n}, c_{i_1}a^{i,I}_{j,1} \ldots a^{i,I}_{j,n-1} c_{i_n}
\rangle b_0(a^{i,I}_{j,0})^\star c_i(a^{i,I}_{j,n})^\star b_n
\end{eqnarray*}
for each $n \ge 1$ and any elements $b_i \in A$.
\end{Lem}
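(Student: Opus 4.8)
The plan is to derive the stated formula for $\mu_n$ directly from the definition $\mu_n = (\partial_n)^*$ together with the explicit description of the adjoint provided by Lemma~\ref{lem:mainformula}. Recall that $\partial_n \colon M \to M^{\boxtimes n}$ is a morphism of $A$--$A$-bimodules, and that under the inclusions $\iota^{(n)}$ the adjoint $(\partial_n)^*$ restricts to a map $M^{\boxtimes n} \to M$ by Proposition~\ref{prp:mainprop}. So the first step is to write $\partial_n$ as a finite sum of the ``elementary'' morphisms $g$ appearing in Lemma~\ref{lem:mainformula}: for each basis element $c_i$ and each term $a^{i,I}_{j,0}c_{i_1}a^{i,I}_{j,1}\ldots c_{i_n}a^{i,I}_{j,n}$ in $\partial_n c_i$, the morphism sending $c_i$ to that term and every other basis element to $0$ is exactly of the form treated there (with $c = c_i$, $d_l = c_{i_l}$, $a_l = a^{i,I}_{j,l}$). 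Since $(\cdot)^*$ is additive, $\mu_n = (\partial_n)^*$ is the corresponding finite sum of adjoints of these elementary pieces.

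Next I would apply Lemma~\ref{lem:mainformula} to each elementary piece. Given an input element of $M^{\boxtimes n}$, which under $\iota^{(n)}$ is identified with $\langle a_0'c_{i_1}a_1'\ldots c_{i_n}a_n', \x\rangle$ after matching the generator indices, the lemma yields that the adjoint sends it to
\[
\langle (\mathfrak{t}(a^{i,I}_{j,1},a_1')\cdot\hdots\cdot\mathfrak{t}(a^{i,I}_{j,n-1},a_{n-1}'))\cdot a_0'(a^{i,I}_{j,0})^\star\cdot c_i\cdot(a^{i,I}_{j,n})^\star a_n',\x\rangle
\]
whenever the generator sequences agree, and to $0$ otherwise. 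Summing over $i$ and $j$ (and noting that for a fixed input with generator sequence $I$ only the terms with that multi-index $I$ contribute) gives precisely the claimed formula, once we rewrite the input $b_0 c_{i_1} b_1 \ldots b_{n-1} c_{i_n} b_n$ in the form above (so $a_0' = b_0$, $a_l' = b_l$, $a_n' = b_n$), and observe that the product of $\mathfrak{t}$-factors $\mathfrak{t}(a^{i,I}_{j,1},b_1)\cdots\mathfrak{t}(a^{i,I}_{j,n-1},b_{n-1})$ is by definition of the pairing $\langle\cdot,\cdot\rangle$ on $M^{\boxtimes n}$ equal to $\langle c_{i_1}b_1\ldots b_{n-1}c_{i_n}, c_{i_1}a^{i,I}_{j,1}\ldots a^{i,I}_{j,n-1}c_{i_n}\rangle$ (the endpoint coefficients being trivial, i.e.\ $\mathfrak{t}$ with $1$).

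This is really a bookkeeping argument, so there is no single ``hard part'' in the sense of a conceptual obstacle; the main thing to be careful about is the indexing and the placement of the involutions $(\cdot)^\star$. In particular one must keep track of the fact that the inner and outer coefficients $a^{i,I}_{j,0}$ and $a^{i,I}_{j,n}$ get conjugated and moved to the outside of $c_i$ (this is exactly what Lemma~\ref{lem:mainformula} records, via the identity $\mathfrak{t}(a,b) = \mathfrak{t}(ab^\star,1)$), while the interior coefficients $a^{i,I}_{j,1},\ldots,a^{i,I}_{j,n-1}$ get absorbed into the scalar pairing factor. Since all the content has been isolated into Lemma~\ref{lem:mainformula}, the proof is just the observation that $\partial_n$ decomposes into elementary pieces plus additivity of the adjoint, so I would keep the write-up to a couple of sentences pointing at Lemma~\ref{lem:mainformula} and the bimodule-linearity of $\partial_n$.
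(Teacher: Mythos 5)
Your proposal is correct and is exactly the argument the paper intends: the lemma is stated with no separate proof beyond the phrase ``Using Lemma~\ref{lem:mainformula}\dots'', and your decomposition of $\partial_n$ into the elementary bimodule morphisms of Lemma~\ref{lem:mainformula} followed by additivity of the adjoint is precisely that argument spelled out. The only point worth noting is that identifying the interior $\mathfrak{t}$-factors with the pairing $\langle c_{i_1}b_1\hdots b_{n-1}c_{i_n},\, c_{i_1}a^{i,I}_{j,1}\hdots a^{i,I}_{j,n-1}c_{i_n}\rangle$ tacitly uses $\mathfrak{t}(1,1)=1$ for the two endpoint coefficients, an assumption already implicit in the paper's own formulation of the lemma, so this is not a gap in your proof relative to the paper.
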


Given an augmentation $\varepsilon \colon \mathcal{A} \to A$, we
define the adjoints
\[\varepsilon_{(n)}^* \co A^* \to (M^{\boxtimes n})^*\]
for each $n \ge 0$, where $\varepsilon_{(0)}^*=\id_{A^*}$. Again
these maps are related to the notion of a ``bounding cochain''. As a
side remark, We note that
\begin{equation} \label{poneron}
\varepsilon_{(i)}^*(a)=a \cdot
\varepsilon_{(i)}^*(1)=\varepsilon_{(i)}^*(1) \cdot a
\end{equation}
holds for the $A$--$A$-bimodule structure defined by (\ref{bimodule
via star}).

\begin{Rem}
 When the algebra $A$ is free as an $R$-module and the
pairing $\mathfrak{t}$ is induced by an orthonormal basis, the
``bounding cochains''
\[\varepsilon_{(n)}^* \co A \to (M^{\boxtimes n})^*\]
can be expressed as
\begin{equation}\label{epsilon star explicit}
\varepsilon^*_{(n)}(a)  = \langle a, \varepsilon(\x) \rangle =
 \sum_{a_0 d_1 a_1 \ldots a_{n-1}d_n a_n} \mathfrak{t}(a,\varepsilon(a_0 d_1 a_1 \ldots a_{n-1}
d_n a_n))a_0 d_1 a_1 \ldots a_{n-1}d_n a_n,
\end{equation}
where the sum is taken over all words $a_0 d_1 a_1 \ldots a_{n-1}d_n
a_n$ such that $a_1, \ldots, a_n$ are elements of the orthonormal
basis of $A$ and $d_1, \ldots, d_n$ are elements of the prescribed
basis of $M$ (both allowing repetitions).

As a double check we verify Equation \ref{poneron} for $n=1$ using
Equation \eqref{epsilon star explicit}. If $\{ c_1, \ldots, c_k \}$
is the basis of $M$, we denote $\varepsilon_i = \varepsilon(c_i)$.
Then Equation \eqref{epsilon star explicit} for $n=1$ can be
rewritten as
$$\varepsilon^*_{(1)}(a) = \sum_{i=1}^k \sum_{a_+, a_-} \mathfrak{t}(a, a_+
\varepsilon_i a_-) a_+ c_i a_-,$$ where $a_+$ and $a_-$ run through
the orthonormal basis of $A$. Now we observe that
$$\sum_{a_+} \mathfrak{t}(a, a_+ \varepsilon_i a_-) a_+ =
\sum_{a_+} \mathfrak{t}(a a_-^\star \varepsilon^\star_i, a_+)a_+ = a
a_-^\star \varepsilon^\star_i$$ and therefore we can rewrite
$$\varepsilon^*_{(1)}(a) = \sum_{i=1}^k \sum_{a_-} a a_-^\star \varepsilon^\star_i c_i
a_-.$$ On the other hand we have
\begin{align*}
&a \varepsilon_{(1)}(1) = \sum_{i=1}^k \sum_{a_+, a_-}
\mathfrak{t}(1, a_+ \varepsilon_i a_-) aa_+c_ia_- =\\&= \sum_{i=1}^k
\sum_{a_+, a_-} \mathfrak{t} (a_-^\star \varepsilon_i^\star, a_+)
aa_+c_ia_- = \sum_{i=1}^k \sum_{a_-} aa_-^\star \varepsilon_i^\star
c_i a_-.
\end{align*}
Then half of Equation \eqref{poneron} is verified. The other half is
similar.
\end{Rem}
\begin{defn}\label{defn: A-infty operation case II}
Given a sequence
$\boldsymbol{\varepsilon}=(\varepsilon_0,\varepsilon_1,\hdots,
\varepsilon_n)$ of augmentations of ${\mathcal A}$, we define the
operations
\[\mu^{\boldsymbol{\varepsilon}}_n \colon (M^{\boxtimes n}) \to M^*, \quad n \ge 1,\]
via the formulas
\begin{eqnarray}
\lefteqn{\mu^{\boldsymbol{\varepsilon}}_n(a_0m_1a_1 \ldots
a_{n-1}m_na_n) =}
\\ \nonumber &=& \sum \limits_{i=1}^\infty \sum_{i_0+\hdots +i_n+n=i \atop i_j \ge 0}
\mu_i( (\varepsilon_0)_{(i_0)}^*(a_0)\boxtimes m_1 \boxtimes
(\varepsilon_1)_{(i_1)}^*(m_1) \boxtimes \ldots \boxtimes m_n
\boxtimes (\varepsilon_n)_{(i_{n})}^*(a_{n})),\label{ainfinity}
\end{eqnarray}
where $m_1 \ldots, m_n \in M$ and $a_0, \ldots, a_n \in A$ and
$a_0m_1a_1 \ldots a_{n-1}m_na_n \in M^{\boxtimes n}$ is identified
to an element in $(M^{\boxtimes n})^*$ by the inclusion $\iota_{(n)}
\colon M^{\boxtimes n} \to (M^{\boxtimes n})^*$ (see Equation
\eqref{inclusion in dual}).
\end{defn}
\begin{Lem}
The compositions in Formula \eqref{ainfinity} give rise to a
well-defined map
$$\mu^{\boldsymbol{\varepsilon}}_n \colon M^{\boxtimes n} \to M \subset M^*.$$
\end{Lem}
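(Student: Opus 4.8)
The plan is to show that each term in the sum defining $\mu^{\boldsymbol{\varepsilon}}_n$ lands in $M$ under the inclusion $\iota^{(1)} \colon M \to M^*$, so that the (in principle infinite) sum is in fact finite and takes values in $M$. The key point is to identify which maps are being composed. Each summand has the form $\mu_i$ applied to an element of $(M^{\boxtimes i})^*$ obtained by boxtimes-ing together the inputs $m_1,\ldots,m_n$ (viewed in $M \subset M^*$) with the ``bounding cochain'' pieces $(\varepsilon_j)^*_{(i_j)}(a_j) \in (M^{\boxtimes i_j})^*$.

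First I would observe that each factor $(\varepsilon_j)^*_{(i_j)}(a_j)$ is itself the adjoint of the bimodule morphism $(\varepsilon_j)_{(i_j)} \colon M^{\boxtimes i_j} \to A$ evaluated suitably, and by Lemma~\ref{adjoint is a bimodule map} together with the discussion around Equation~\eqref{inclusion in dual} it lies in the image of $\iota^{(i_j)} \colon M^{\boxtimes i_j} \hookrightarrow (M^{\boxtimes i_j})^*$ — indeed, since $\varepsilon_j|_M \colon M \to A$ is a bimodule map between finitely generated free bimodules, $\varepsilon^\vee_j = \sum_l \varepsilon_j(c_l) c_l \in M$, and more generally $(\varepsilon_j)^*_{(i_j)}(a_j)$ is represented by an honest element of $M^{\boxtimes i_j}$ (this is essentially Equation~\eqref{epsilon star explicit} in the free case, but holds in general by the non-degeneracy hypothesis on $\mathfrak{t}$). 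Hence the input to $\mu_i = (\partial_i)^*$ is an element of $M^{\boxtimes i}$ sitting inside $(M^{\boxtimes i})^*$ via $\iota^{(i)}$.

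Next I would invoke Proposition~\ref{prp:mainprop}, or rather Lemma~\ref{lem:mainformula} directly, applied to $g = \partial_i \colon M \to M^{\boxtimes i}$: since $i \ge 1$ (the index $i = i_0 + \cdots + i_n + n \ge n \ge 1$), the adjoint $(\partial_i)^*$ carries the submodule $M^{\boxtimes i} \subset (M^{\boxtimes i})^*$ into $M = M^{\boxtimes 1} \subset M^*$, and on that submodule it is given by the explicit formula of Lemma~\ref{explicit computation II}. So every individual summand lies in $M$.

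The one thing that genuinely needs care — and I expect this to be the main obstacle — is finiteness of the sum, i.e.\ that only finitely many pairs $(i; i_0,\ldots,i_n)$ contribute. This follows because $\partial_i = 0$ for all $i$ sufficiently large (as $M$ is finitely generated, stated after Equation~\eqref{decomposition of differential}), so $\mu_i = (\partial_i)^* = 0$ for $i \gg 0$; hence the outer sum over $i$ is really a finite sum, and for each fixed $i$ there are only finitely many decompositions $i_0 + \cdots + i_n = i - n$ with $i_j \ge 0$. Therefore $\mu^{\boldsymbol{\varepsilon}}_n(a_0 m_1 a_1 \ldots a_{n-1} m_n a_n)$ is a finite sum of elements of $M$, hence lies in $M \subset M^*$, which is the claim. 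I would also remark that the expression is well defined on $M^{\boxtimes n}$ rather than merely on $M^{\otimes n}$: each constituent map ($\iota^{(n)}$, the bimodule maps $\varepsilon_j$ and $\partial_i$, and their adjoints via Lemma~\ref{adjoint is a bimodule map}) respects the balanced tensor relations, and the insertion of the $(\varepsilon_j)^*_{(i_j)}$-factors is compatible with moving elements of $A$ across $\boxtimes$ by Equation~\eqref{poneron}.
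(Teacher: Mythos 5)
Your overall strategy (reduce to the statement that certain adjoints preserve the submodules $M^{\boxtimes k}\subset (M^{\boxtimes k})^*$, plus finiteness of the sum) is the right one, and your remarks about finiteness of the outer sum and about well-definedness on the balanced tensor product are correct. However, there is a genuine gap in your first step: the claim that each factor $(\varepsilon_j)^*_{(i_j)}(a_j)$ lies in the image of $\iota^{(i_j)}\colon M^{\boxtimes i_j}\hookrightarrow (M^{\boxtimes i_j})^*$ is false in general. The map $(\varepsilon_j)_{(i_j)}\colon M^{\boxtimes i_j}\to A$ has target $M^{\boxtimes 0}=A$, i.e.\ it is exactly the case $n=0$ that the Remark after Proposition~\ref{prp:mainprop} warns about: adjoints of bimodule maps into $A$ need not preserve the distinguished submodules. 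Concretely, for $A=R[G]$ with $G$ infinite (one of the paper's two main examples of a Hermitian algebra), Equation~\eqref{epsilon star explicit} exhibits $\varepsilon^*_{(1)}(a)$ as the infinite sum $\sum_i\sum_{a_-}a\,a_-^\star\varepsilon_i^\star\, c_i\, a_-$ over all $a_-\in G$, which is not an element of $M$. Your appeal to non-degeneracy of $\mathfrak{t}$ does not repair this: non-degeneracy gives injectivity of $\iota$, never surjectivity, so it cannot be used to represent an arbitrary functional by an honest element. (You are also conflating $\varepsilon^\vee=\sum_l\varepsilon(c_l)c_l$, which belongs to the Case~I construction, with $\varepsilon^*_{(1)}$, which is the relevant Case~II object.) Consequently your subsequent application of Lemma~\ref{lem:mainformula} to $g=\partial_i$ is made on an input that is not known to lie in $M^{\boxtimes i}$.

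The fix — and the paper's actual argument — is to not factor the summand as $(\partial_i)^*\circ E^*$, but to recognise each summand as the adjoint of the \emph{single} bimodule morphism
$$\bigl((\varepsilon_0)^{\boxtimes i_0}\boxtimes\id_M\boxtimes(\varepsilon_1)^{\boxtimes i_1}\boxtimes\cdots\boxtimes\id_M\boxtimes(\varepsilon_n)^{\boxtimes i_n}\bigr)\circ\partial_i\colon M\longrightarrow M^{\boxtimes n},$$
whose target is $M^{\boxtimes n}$ with $n\ge 1$. Proposition~\ref{prp:mainprop} (with $k=l=0$) then applies to this composite as a whole and shows that its adjoint restricts to a map $M^{\boxtimes n}\to M$. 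The point is that the "bad" $\varepsilon$-factors are absorbed into a map whose target has positive tensor length, so the obstruction you would face term-by-term never arises; the infinitely many words appearing in $\varepsilon^*_{(i_j)}(a_j)$ pair nontrivially with only finitely many of the words occurring in $\partial_i(c)$ for each of the finitely many generators $c$. Your finiteness observation for the sum over $i$ (namely $\partial_i=0$ for $i\gg 0$) then completes the proof.
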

\begin{proof}
Since
$$\mu^{\boldsymbol{\varepsilon}}_n :=\sum \limits_{i=1}^\infty \sum_{i_0+\hdots +i_n+n=i \atop i_j > 0}\big(\big((\varepsilon_0)^{\boxtimes i_0} \boxtimes \id_M
\boxtimes (\varepsilon_1)^{\boxtimes i_1} \boxtimes \hdots
\boxtimes\id_M \boxtimes (\varepsilon_n)^{\boxtimes i_n} \big)\circ
\partial_i \big)^*,$$ the statement follows from Proposition
\ref{prp:mainprop}.
\end{proof}
\begin{Rem}
The operations $\mu^{\boldsymbol{\varepsilon}}_n$ are morphisms of
$A$--$A$-bimodules by Lemma \ref{adjoint is a bimodule map} and
Equation \eqref{poneron}.
\end{Rem}
The main result of this section is that these operations define an
$A_\infty$-category.
\begin{Thm}\label{thm:caseII}
For any $n \ge 1$ and fixed sequence $\boldsymbol{\varepsilon}=
(\varepsilon_0,\varepsilon_1,\hdots,\varepsilon_n)$ of
augmentations, the operations in Definition \ref{defn: A-infty
operation case II} satisfy the following $A_\infty$ relations:
\begin{equation*}
\sum_{n=k-1+l \atop k,l > 0}\sum_{i=1}^k (-1)^\dagger
\mu^{\hat{\boldsymbol{\varepsilon}}_{i-1,i+l-1}}_k(m_1,\hdots,m_{i-1},
\mu^{\boldsymbol{\varepsilon}_{i-1,i+l-1}}_l(m_i,\hdots,m_{i+l-1}),
m_{i+l},\hdots,m_n) =0,
\end{equation*}
where $\dagger=|m_1|+\cdots+|m_{i-1}|+(i-1)$, and $m_1,\hdots,m_n
\in M$ are of homogeneous degree. In other words, the above
operations form the morphisms and higher operations of an
$A_\infty$-category over $A$ whose objects consist of the
augmentations $\varepsilon \colon (\mathcal{A},\partial) \to (A,0)$.
\end{Thm}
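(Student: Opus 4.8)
The plan is to transcribe the three--step reduction from the proof of Theorem~\ref{thm:case1} to the Hermitian setting, replacing the bimodule dual $(-)^\vee$ by the $R$--dual $(-)^*$ with the bimodule structure of Equation~\eqref{bimodule via star} and using Proposition~\ref{prp:mainprop} each time an adjoint of a bimodule map has to be taken. \emph{First, the case when all augmentations are trivial.} The trivial augmentation (sending every generator to $0$) exists precisely when $\partial_0=0$, and in that case $\mu^{\boldsymbol{\varepsilon}}_n=\mu_n=(\partial_n)^*$, so the statement becomes the assertion that $(\mu_n)_{n\ge1}$ satisfy the strict $A_\infty$ relations. I would obtain these by dualising Equation~\eqref{eq:leibnizaugmented} --- available because $\partial_0=0$ --- term by term. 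Each summand there factors as $(\sigma^{\boxtimes i}\boxtimes\id^{\boxtimes(k+l-1-i)})\circ(\id^{\boxtimes i}\boxtimes\partial_l\boxtimes\id^{\boxtimes(k-1-i)})\circ\partial_k$ with $k,l>0$; since $\partial_l$ and $\partial_k$ both map into strictly positive tensor powers, Proposition~\ref{prp:mainprop} shows that the adjoint of the last two factors restricts, on the submodules $M^{\boxtimes\bullet}\subset(M^{\boxtimes\bullet})^*$ of Equation~\eqref{inclusion in dual}, to $\mu_k\circ(\id^{\boxtimes i}\boxtimes\mu_l\boxtimes\id)$, while the adjoint of the first factor is $(-\sigma)^{\boxtimes i}\boxtimes\id$ because $(\sigma)^*=-\sigma$ for the suspended dual grading of Section~\ref{sec:bimoduledual}. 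Using the contravariance of $(-)^*$ to reverse compositions, these pieces assemble into the asserted $A_\infty$ relations with the sign $\dagger$; see Diagram~\eqref{mubar2}.

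\emph{Second, the case of a single augmentation.} For a constant sequence $\boldsymbol{\varepsilon}=(\varepsilon,\ldots,\varepsilon)$ I would prove that the $\mu^{\boldsymbol{\varepsilon}}_n$ attached to $(\mathcal{A},\partial)$ coincide with the operations $\mu_n=((\partial^\varepsilon)_n)^*$ attached to the developed DGA $(\mathcal{A},\partial^\varepsilon)$ of Subsection~\ref{ssec: augmentations and linearisations}; here the change of coefficients involved is the identity on $A$, since the augmentations take values in $A$. Expanding $\partial^\varepsilon=\Phi_\varepsilon\circ\partial\circ\Phi_\varepsilon^{-1}$ and dualising slot by slot, the bounding--cochain insertions $(\varepsilon)^*_{(i_j)}$ in Definition~\ref{defn: A-infty operation case II} are matched with the adjoints of the $\Phi_\varepsilon^{\pm1}$--corrections (in the free case this matching is exactly the explicit formula~\eqref{epsilon star explicit}, double--checked for $n=1$ in the remark around it). As $(\partial^\varepsilon)_0=0$, the previous case applies to $(\mathcal{A},\partial^\varepsilon)$ and gives the $A_\infty$ relations here.

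\emph{Third, the general case.} Given $\boldsymbol{\varepsilon}=(\varepsilon_0,\ldots,\varepsilon_n)$ I would apply the free $n$--copy construction of Section~\ref{sec:semisimple}. The DGA $(\mathfrak{A}_n,\mathfrak{d})$ is again semifree over the Hermitian algebra $A$, on the finite basis $\{c^{ij}\}$, so Definition~\ref{defn: A-infty operation case II} applies to it, and it carries the augmentation $\mathfrak{e}\colon\mathfrak{A}_n\to A$ induced by $\boldsymbol{\varepsilon}$. By the second case applied to $(\mathfrak{A}_n,\mathfrak{d})$ and $\mathfrak{e}$, the operations $\mu^{(\mathfrak{e},\ldots,\mathfrak{e})}_k$ satisfy the $A_\infty$ relations. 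One then verifies, exactly as in Case~I, the identity
\[ \mu_n^{\boldsymbol{\varepsilon}}(a_1d_1,\ldots,a_nd_n)=\mu_n^{\mathfrak{e}}(a_1d_1^{i_1j_1},\ldots,a_nd_n^{i_nj_n}) \]
for composable tuples of mixed generators with $i_k<j_k$ and $j_k=i_{k+1}$ (the output being read in the copy $\mathcal{A}_{i_1j_n}$): since $\mathfrak{e}$ annihilates every mixed generator, only ``pure'' insertions --- carrying the data of the individual $\varepsilon_{j_k}$ --- survive in the expansion defining $\mu^{\mathfrak{e}}$, and these reproduce the expansion defining $\mu^{\boldsymbol{\varepsilon}}$. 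Transporting the relations for $\mathfrak{A}_n$ through this identity finishes the proof, and the sign $\dagger$ is inherited unchanged.

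The step I expect to be the main obstacle is the last one, namely the displayed identity relating $\mu^{\boldsymbol{\varepsilon}}$ and $\mu^{\mathfrak{e}}$: one has to follow carefully how the Hermitian pairing $\mathfrak{t}$ and the involution $\star$ interact simultaneously with the splitting of $\mathfrak{d}$ into pure and mixed parts and with the bounding--cochain insertions $\mathfrak{e}^*_{(k)}$, and check that the surviving summands appear with the right coefficients in the right copy $\mathcal{A}_{i_1j_n}$. That all the operations are $A$--$A$-bimodule maps (the remark after Definition~\ref{defn: A-infty operation case II}), together with Proposition~\ref{prp:mainprop}, is what keeps this bookkeeping consistent, but it is the least formal part. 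Steps one and two are the Hermitian transcription of the corresponding parts of the proof of Theorem~\ref{thm:case1}, the only genuinely new ingredient being the restriction--to--$M^{\boxtimes\bullet}$ statement of Proposition~\ref{prp:mainprop}.
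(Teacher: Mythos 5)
Your proposal is correct and follows essentially the same route as the paper: the authors prove Theorem~\ref{thm:caseII} ``mutatis mutandis'' from the three-step reduction in the proof of Theorem~\ref{thm:case1} (trivial augmentations via Equation~\eqref{eq:leibnizaugmented}, a single augmentation via the developed DGA, and the general case via the free $n$-copy DGA), and they single out as the one new ingredient exactly the identity $\id^{\boxtimes i-1}\boxtimes\mu_l\boxtimes\id^{\boxtimes(k-i)}=(\id^{\boxtimes i-1}\boxtimes\partial_l\boxtimes\id^{\boxtimes(k-i)})^*$ on the submodules $M^{\boxtimes\bullet}\subset(M^{\boxtimes\bullet})^*$, which is the content of Proposition~\ref{prp:mainprop} that you invoke at the corresponding places. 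Your write-up is simply a more detailed transcription of the same argument.
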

\begin{proof}
The proof follows mutatis mutandis from the proof of Theorem
\ref{thm:case1}. To that end, we just have to check the fact that
the equality
\[ \id^{\boxtimes i-1} \boxtimes \mu_l \boxtimes \id^{\boxtimes (k-i)} = (\id^{\boxtimes i-1} \boxtimes \partial_n \boxtimes \id^{\boxtimes (k-i)})^*\]
is satisfied.
\end{proof}

\section{A toy example of a DGA}\label{sec: Lenny's example}
In this section we discuss a toy example which illustrates the two
different $A_\infty$-structures defined. This example was suggested
by Lenny Ng and was an inspiration for this paper.

Let $A$ be an algebra over $R= \Z/2\Z$ and let $g_1, g_2$ be two
elements of $A$ which do not (necessarily) commute. We will consider
the differential graded algebra $({\mathcal A}, \partial)$ over $A$
generated by $c_1, \ldots,c_5$ and with differential
\begin{eqnarray*}
& & \partial c_1 = c_2 g_1 c_4 +c_3,\\
& & \partial c_2 = c_5 g_2,\\
& & \partial c_3 = c_5 g_2 g_1 c_4,\\
& & \partial c_4 = \partial c_5 =0.
\end{eqnarray*}
It is easily checked that $\partial^2=0$. Moreover, $\partial_0$
vanishes and therefore there is a canonical augmentation which sends
every generator to zero.

\subsection{The $A_\infty$-structure defined in Subsection
\ref{subsec:case1} (Case I)} The construction of Subsection
\ref{subsec:case1}, performed on the trivial augmentation, gives
rise to an $A_\infty$-algebra structure on $M^\vee=\oplus_{i=1}^5 A
c_i$. Let $a, a' \in A$. Using Lemma \ref{explicit computation I} we
compute the first order operations
\begin{eqnarray*}
& & \mu_1(ac_1)=\mu_1(ac_2)=\mu_1(ac_4)=0,\\
& & \mu_1(ac_3)=ac_1,\\
& & \mu_1(ac_5)=ag_2\cdot c_2,
\end{eqnarray*}
and the second order operations
\begin{eqnarray*}
& & \mu_2(ac_2,a'c_4)=ag_1a' \cdot c_1, \\
& & \mu_2(ac_5,a'c_4)=ag_2g_1a' \cdot c_3,
\end{eqnarray*}
while $\mu_2(ac_i,a'c_j)=0$ whenever $(i,j) \notin \{(2,4),(5,4)\}$.
Finally, $\mu_n \equiv 0$ for all $n \ge 3$.

We verify that these operations verify the $A_\infty$ relations. The
only nontrivial relation to verify (i.e. the only one where not all
terms vanish) is
\begin{eqnarray*}
\lefteqn{\mu_1(\mu_2(a_1c_5, a_2c_4))+ \mu_2(\mu_1(a_1c_5), a_2c_4)+
\mu_2(a_1c_5,
\mu_1(a_2c_4)) =} \\
&=& \mu_1(a_1g_2g_1a_2c_3) + \mu_2(a_1g_2c_2, a_2c_4) + 0 \\
&=& a_1g_2g_1a_2c_1 + a_1g_2g_1a_2c_1\\
&=& 0.
\end{eqnarray*}
\subsection{The $A_\infty$-structure defined in Subsection
\ref{subsec:case2} (Case II)} In this subsection we assume that $A$
is a Hermitian algebra. The construction of Subsection
\ref{subsec:case2}, performed on the trivial augmentation, gives
rise to an $A_\infty$-algebra structure on $M$, which we identify to
a submodule of $M^*$ by \eqref{inclusion in dual}. Elements in
$M^{\boxtimes n}$ can be written, as usual, as linear combinations
of terms of the form $a_0c_{i_1}a_1 \ldots a_{n-1}c_{i_n}a_n$, where
$a_0, \ldots a_n \in A$ and $c_{i_1}, \ldots, c_{i_n}$ are element
of the prescribed basis of $M$. Since in case II the operations
$\mu_n$  are morphisms of $A$--$A$-bimodule, we will give their
values only on elements of the form $c_{i_1}a_1 \ldots
a_{n-1}c_{i_n}$.

Using Lemma \ref{explicit computation II} we compute the first order
operations
\begin{eqnarray*}
& & \mu_1(c_1)=\mu_1(c_2)=\mu_1(c_4)=0,\\
& & \mu_1(c_3)=c_1,\\
& & \mu_1(c_5)=c_2 g_2^\star
\end{eqnarray*}
and the second order operations, for all $h \in A$,

\begin{eqnarray*}
& & \mu_2(c_2hc_4)= \mathfrak{t}(h, g_1)c_1, \\
& & \mu_2(c_5hc_4)= \mathfrak{t}(h, g_2g_1) c_3,
\end{eqnarray*}
while $\mu_2(c_ihc_j)=0$ in all other cases. Finally, $\mu_n \equiv
0$ for all $n \ge 3$.

The only nontrivial $A_\infty$ relation to check is
\begin{eqnarray*}
\lefteqn{\mu_1(\mu_2(c_5hc_4))+ \mu_2(\mu_1(c_5) \boxtimes hc_4) +
\mu_2(c_5h \boxtimes
\mu_1(c_4)) = } \\
& = & \mathfrak{t}(h, g_2g_1)\mu_1(c_3) + \mu_2(c_2g_2^\star hc_4) +  0 \\
& = & \mathfrak{t}(h, g_2g_1) c_1 + \mathfrak{t}(g_2^\star h, g_1)c_1 \\
& = & 0
\end{eqnarray*}
because $\mathfrak{t}(h, g_2g_1) = \mathfrak{t}(g_2^\star h, g_1)$
by the properties of the adjoint.

\section{Potential examples of knots distinguished by the constructed $A_\infty$-structures}
\label{sec:computation}
For computational purposes, the $A_\infty$-algebra  is much easier to use for extracting invariants compared to the DGA. For instance, the products and higher order Massey products in linearised Legendrian cohomology introduced in \cite{Productstructure} were in the same article shown to be efficient tools for distinguishing a Legendrian knot from its mirror (in case when the underlying homologies are isomorphic). The latter construction considered an $A_\infty$-structure for coefficients in $\Z_2$.

Assume that there exists a Legendrian knot $\Lambda_n \subset (\R^3,dz-ydx)$ which satisfies the following for some $n>1$:
\begin{enumerate}[label=(\roman*)]
\item The rotation number of $\Lambda_n$ is zero;
\item The bound on the Thurston-Bennequin invariant of $\Lambda_n$ in terms of the Kauffman polynomial of the underlying smooth knot is not sharp. In particular, this means that the Chekanov-Eliashberg algebra of $\Lambda_n$ has no augmentation in $\Z_2$ (see \cite{RutherfordKauffman} for more details); and
\item The Chekanov-Eliashberg algebra of $\Lambda_n$ admits a (0-graded) augmentation in $M_n(\Z_2)$.
\end{enumerate}
The authors expect that such knots can be constructed using the methods from \cite[Theorem 4.8]{Satellites} but, unfortunately, as of now they are not aware of an explicit example. In any case, performing cusp connected sums (see \cite{Etnyre_&_Connected_Sums}) between the Legendrian knots considered in \cite{Productstructure} and the hypothetical Legendrian knot $\Lambda_n$ we obtain Legendrian knots for which our construction can be used as an efficient computational tool; see Proposition \ref{prp:computation} below.

First we recall some details concerning the examples from
\cite{Productstructure}. For a Legendrian knot $\Lambda$ we denote
by $\overline{\Lambda}$ its \emph{Legendrian mirror}, i.e.~its image
under the contactomorphism $(x,y,z) \mapsto (x,-y,-z)$. Consider the
examples $\Lambda_{k,l,m}$ constructed in the proof of \cite[Theorem
1.1]{Productstructure} (the first part) which satisfy the following.
For any triple $k,l,m \ge 1$ it is the case that
\begin{itemize}
\item the rotation number of $\Lambda_{k,l,m}$ is zero, and the DGA is hence graded in the integers, and
\item given that the three numbers $l-m-1$, $m-k+1$, $l-k+1$, are distinct, there is a unique graded augmentation $\varepsilon$ in $\Z_2$.
\end{itemize}
It follows that the same properties are satisfied for its Legendrian mirror $\overline{\Lambda_{k,l,m}}$. Given $k,l,m$ satisfying the second property, the knot $\Lambda_{k,l,m}$ is distinguished from its mirror up to Legendrian isotopy by a computation showing that
\begin{enumerate}
\item the product
$$ \mu^{\varepsilon,\varepsilon,\varepsilon}_2 \colon LCH^{l-m-1}_\varepsilon(\Lambda_{k,l,m}) \oplus LCH^{m-k+1}_\varepsilon(\Lambda_{k,l,m}) \to LCH^{l-k}_\varepsilon(\Lambda_{k,l,m})$$
does not vanish identically, while
\item the product
$$ \mu^{\varepsilon,\varepsilon,\varepsilon}_2 \colon LCH^{l-m-1}_\varepsilon(\overline{\Lambda_{k,l,m}}) \oplus LCH^{m-k+1}_\varepsilon(\overline{\Lambda_{k,l,m}}) \to LCH^{l-k}_\varepsilon(\overline{\Lambda_{k,l,m}})$$
does vanish.
\end{enumerate}
We can use these examples to show the following.
\begin{Prop}
\label{prp:computation}
Assume the existence of a Legendrian knot $\Lambda_n$ for some $n>1$ satisfying conditions (i)--(iii) above. The cusp connected sum $\Lambda_{k,l,m} \# \Lambda_n \subset (\R^3,dz-ydx)$ is a Legendrian knot admitting a (0-graded) augmentation in $M_n(\Z_2)$ but not in $\Z_2$. For suitable $k,l,m > 0$ (depending on the knot $\Lambda_n$) the Legendrian knot $\Lambda_{k,l,m} \# \Lambda_n$ can moreover be distinguished from both $\overline{\Lambda_{k,l,m}} \# \Lambda_n$ and $\overline{\Lambda_{k,l,m} \# \Lambda_n}$ using the $A_\infty$-structure in linearised Legendrian contact homology with coefficients in $M_n(\Z_2)$ as constructed in Section \ref{subsec:case2} (i.e.~Case II).
\end{Prop}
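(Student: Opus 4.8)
The plan is to reduce the statement to the $\Z_2$-computations of \cite{Productstructure} by localising in the degree and then base-changing along the unital inclusion $\Z_2\hookrightarrow M_n(\Z_2)$, $1\mapsto\id$.

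First I would assemble the behaviour of the Chekanov--Eliashberg DGA under the cusp connected sum of \cite{Etnyre_&_Connected_Sums}. Since the rotation numbers of $\Lambda_{k,l,m}$ and of $\Lambda_n$ both vanish (the latter by hypothesis (i)), the connected sum has vanishing rotation number, every DGA in sight is $\Z$-graded, and the grading is compatible with the gradings of the two summands. The DGA $\mathcal A(\Lambda_{k,l,m}\#\Lambda_n)$ is generated by the generators of $\mathcal A(\Lambda_{k,l,m})$, those of $\mathcal A(\Lambda_n)$, and finitely many ``connecting'' generators lying in a bounded range of degrees, and it contains $\mathcal A(\Lambda_{k,l,m})$ and $\mathcal A(\Lambda_n)$ as sub-DGAs; consequently every $0$-graded augmentation into an algebra $B$ restricts to $0$-graded augmentations of the two summands, and conversely a pair of such augmentations into a common $B$ assembles to one of the connected sum. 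Granting this, the first assertion is immediate: a $\Z_2$-augmentation of $\Lambda_{k,l,m}\#\Lambda_n$ would restrict to one of $\Lambda_n$, contradicting (ii) via \cite{RutherfordKauffman}; while the unique $0$-graded $\Z_2$-augmentation $\varepsilon$ of $\Lambda_{k,l,m}$, pushed forward to $M_n(\Z_2)$, together with the $0$-graded $M_n(\Z_2)$-augmentation of $\Lambda_n$ provided by (iii), assembles to a $0$-graded $M_n(\Z_2)$-augmentation $\boldsymbol\varepsilon$ of $\Lambda_{k,l,m}\#\Lambda_n$.

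For the distinguishing statement I would fix $k,l,m$ generically --- in particular satisfying the distinctness hypotheses of \cite{Productstructure} --- and large enough that the integers $\alpha=l-m-1$, $\beta=m-k+1$, $\gamma=l-k$ are pairwise distinct, nonzero, and lie outside the finitely many degrees occupied by the linearised contact cohomology of $\Lambda_n$ and of $\overline{\Lambda_n}$ and by the connecting generators; this is possible since $\alpha$ and $\beta$ may be prescribed arbitrarily by taking $m$ large, and note $\gamma=\alpha+\beta$. With this choice the connected-sum description localises: for any $0$-graded $M_n(\Z_2)$-augmentation, the groups $LCH^{\bullet}$ in degrees $\alpha,\beta,\gamma$ and the restriction of $\mu_2$ to them coincide with those of the $\Lambda_{k,l,m}$-summand with $M_n(\Z_2)$-coefficients, no connecting generator or generator of $\Lambda_n$ contributing in this range. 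By Theorem \ref{thm:caseII} the operations for $\boldsymbol\varepsilon$ restricted to the $\Lambda_{k,l,m}$-summand are the base change along $\Z_2\hookrightarrow M_n(\Z_2)$ of the operations $\mu^{\varepsilon,\varepsilon,\varepsilon}$ of \cite{Productstructure}; as this map is injective, $\mu^{\boldsymbol\varepsilon}_2\colon LCH^\alpha\oplus LCH^\beta\to LCH^\gamma$ is nonzero. On the other hand $\overline{\Lambda_{k,l,m}\#\Lambda_n}=\overline{\Lambda_{k,l,m}}\#\overline{\Lambda_n}$, the Legendrian mirror being a contactomorphism that commutes with the cusp connected sum, and in both $\overline{\Lambda_{k,l,m}}\#\Lambda_n$ and $\overline{\Lambda_{k,l,m}}\#\overline{\Lambda_n}$ every $0$-graded $M_n(\Z_2)$-augmentation restricts to one of $\overline{\Lambda_{k,l,m}}$; for the latter knot the computation of \cite{Productstructure} exhibits the vanishing of the corresponding product in degrees $\alpha,\beta,\gamma$ at the chain level, as the absence of contributing rigid discs --- bare or augmented --- a property insensitive to the coefficient algebra. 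Hence no $0$-graded $M_n(\Z_2)$-augmentation of either mirror makes $\mu_2\colon LCH^\alpha\oplus LCH^\beta\to LCH^\gamma$ nonzero.

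Finally I would invoke that the datum ``there exists a $0$-graded $M_n(\Z_2)$-augmentation for which $\mu_2$ is nonzero on $LCH^\alpha\oplus LCH^\beta\to LCH^\gamma$'' is a Legendrian isotopy invariant, which follows from Theorem \ref{thm:caseII} together with the naturality of the construction of Subsection \ref{subsec:case2} under stable tame isomorphisms of DGAs. Since this datum holds for $\Lambda_{k,l,m}\#\Lambda_n$ but for neither $\overline{\Lambda_{k,l,m}}\#\Lambda_n$ nor $\overline{\Lambda_{k,l,m}\#\Lambda_n}$, the three Legendrian knots are pairwise non-isotopic. The main obstacle I anticipate is the connected-sum package of the first step --- proving, with precise control on gradings and on which generators enter each differential, the sub-DGA and the assembly/restriction statements for augmentations, together with the degree-localisation of the induced $A_\infty$-operations --- and, secondarily, checking that the vanishing of the relevant product for $\overline{\Lambda_{k,l,m}}$ in \cite{Productstructure} is genuinely disc-count-theoretic and hence stable under base change and change of augmentation. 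The remainder is degree bookkeeping.
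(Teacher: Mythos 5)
Your overall strategy is the same as the paper's (the paper's own proof is itself only a sketch): describe $\mathcal{A}(\Lambda_{k,l,m}\#\Lambda_n)$ as a free product of the two DGAs with one extra degree-zero generator, assemble the two augmentations into one with values in $M_n(\Z_2)$, choose $k,l,m$ so that the three relevant degrees avoid everything coming from $\Lambda_n$ and the connecting generator, and then import the (non)vanishing computations of \cite{Productstructure}. One small caveat on the assembly step: for noncommutative targets, a pair of augmentations into a ``common $B$'' always induces an algebra map out of the free product, but for that map to kill the differentials (which after the connected sum may contain words mixing letters from both factors) the two images must commute; this is why the paper lands the assembled augmentation in $A_1\otimes A_2$. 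In your application this is harmless, since the $\Lambda_{k,l,m}$-factor is augmented by scalars and $\Z_2\otimes M_n(\Z_2)=M_n(\Z_2)$, but the general claim should be stated with the tensor product.

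The genuine problem is the step asserting that the Case II operations restricted to the $\Lambda_{k,l,m}$-summand are ``the base change along $\Z_2\hookrightarrow M_n(\Z_2)$'' of the operations of \cite{Productstructure}, so that injectivity of this map yields non-vanishing. In Case II the morphism spaces are the free $M_n(\Z_2)$-bimodules $M^{\boxtimes k}$ and the operations are adjoints with respect to the trace pairing $\mathfrak{t}(a,b)=\OP{tr}(b^\star a)$ (Lemma~\ref{explicit computation II}); they are not an extension of scalars of the $\Z_2$-operations, and Theorem~\ref{thm:caseII} says nothing of the sort. Concretely, if $\partial_2 c_i$ contains the word $c_{i_1}c_{i_2}$ with unit coefficients, then $\mu_2(c_{i_1}\cdot h\cdot c_{i_2})=\mathfrak{t}(h,1)\,c_i=\OP{tr}(h)\,c_i$, so the image of the ``base-changed'' generator $c_{i_1}\boxtimes c_{i_2}$ (i.e.\ $h=\id$) is $(n\bmod 2)\,c_i$, which vanishes for $n$ even. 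The non-vanishing of $\mu_2$ as a map of bimodules is still true, but it must be exhibited on a different element, e.g.\ $\mu_2(c_{i_1}E_{11}c_{i_2})=\OP{tr}(E_{11})\,c_i=c_i\neq 0$; dually, the vanishing for the mirrors must be checked for all bimodule elements, which your degree argument does supply since there $\partial_2$ has no contributing words at all in the relevant degrees. With that repair the argument goes through and agrees with the paper's.
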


\begin{proof}[Sketch of proof]
The fact that the connected sum has augmentations in $M_n(\Z_2)$ but not in $\Z_2$ was shown in \cite[Lemma 4.3]{EstimNumbrReebChordLinReprCharAlg}. The existence part uses the following explicit construction that we now outline. Given augmentations $\varepsilon_i \colon \Lambda_i \to A_i$, $i=1,2$, in the unital algebras $A_i$, then there is an induced augmentation $(\varepsilon_1 \# \varepsilon_2) \colon \mathcal{A}(\Lambda_1 \# \Lambda_2) \to A_1 \otimes A_2$ determined as follows. Recall that
$$\mathcal{A}(\Lambda_1 \# \Lambda_2)=\mathcal{A}(\Lambda_1) \star \mathcal{A}(\Lambda_2) \star \langle c_0 \rangle$$
holds on the level of generators, where $|c_0|=0$. The induced augmentation is determined uniquely by the requirements that $(\varepsilon_1 \# \varepsilon_2)(c)=\varepsilon_i(c)$ holds on the old generators (using the canonical algebra maps $a \mapsto a \otimes 1_{A_2} \in A_1 \otimes A_2$ and $b \mapsto 1_{A_1} \otimes b \in A_1 \otimes A_2$) while $(\varepsilon_1 \# \varepsilon_2)(c_0)=1=1_{A_1} \otimes 1_{A_2}  \in A_1 \otimes A_2$ holds on the new generator.

The computations of the DGA of $\Lambda_{k,l,m}$ performed in \cite{Productstructure} can readily be seen to give the following. Consider the construction of the $A_\infty$-structure with coefficients in $M_n(\Z_2)$ as defined in Section \ref{subsec:case2} (i.e.~Case II). The non-vanishing of the product as in (1) again holds for $\Lambda_{k,l,m} \# \Lambda_n$ when using the augmentation $\varepsilon \# \varepsilon_2$ taking values in $M_n(\Z_2)$. It can moreover be seen that that (2) is satisfied for \emph{any} pair of graded augmentations in $M_n(\Z_2)$ for the same coefficients, given that $k,l,m >0$ were chosen appropriately. E.g.~we can choose $k,l,m>0$ so that $l-m-1$, $m-k+1$, $l-k+1$ all are distinct and sufficiently large (depending on the degrees of the Reeb chords of $\Lambda_n$).
\end{proof}
The following result shows the relation between the linearised Legendrian contact cohomology of a Legendrian knot and its Legendrian mirror.
\begin{Lem}
Let $\Lambda \subset (\R^3,dz-ydx)$ be a Legendrian knot. For any pair of augmentations $\varepsilon_i \colon (\mathcal{A}(\Lambda),\partial) \to M_n(R)$, $i=1,2$, there are induced augmentations $\overline{\varepsilon}_i \colon (\mathcal{A}(\overline{\Lambda}),\partial') \to M_n(R)$ for which there is a canonical isomorphism
$$ (LCC^\bullet(\Lambda),d^{\varepsilon_0,\varepsilon_1}) \simeq (LCC^\bullet(\overline{\Lambda}),d^{\overline{\varepsilon}_1,\overline{\varepsilon}_0})$$
of graded $R$-bimodules. (This can even be made into an isomorphism of free $M_n(R)$-bimodules, but in this case we must use a non-standard free bimodule structure on the latter where left and right multiplication has been interchanged, while utilising the transpose of a matrix.)
\end{Lem}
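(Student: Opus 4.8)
The plan is to produce the Legendrian mirror's DGA explicitly and then set up the claimed isomorphism by inspecting how the Chekanov--Eliashberg differential transforms under the contactomorphism $(x,y,z)\mapsto (x,-y,-z)$. First I would recall that this contactomorphism reverses the orientation of the $x$-axis direction along the front projection, so it reverses the cyclic ordering of Reeb chords along each of the two strands of a crossing. On the level of the DGA this means that if $\partial$ sends a generator $c$ to a word $a_0 d_1 a_1 \cdots d_n a_n$ (with $a_i$ the boundary coefficients recording the Lagrangian-projection regions), then $\partial'$ in $\mathcal{A}(\overline{\Lambda})$ sends the corresponding generator to the \emph{reversed} word, together with the appropriate sign adjustments coming from the degree shifts; since we are working over $R$ with $n>1$ it is the $M_n(R)$-valued boundary coefficients that must be transposed, because reversing the order of a product in a matrix algebra is implemented by transposition: $(AB)^{t}=B^{t}A^{t}$. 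This is exactly the reason the parenthetical remark in the statement says one must use the transpose and interchange left/right multiplication.

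With this description in hand, I would define the induced augmentations by $\overline{\varepsilon}_i(\overline{c}) := \varepsilon_i(c)^{t}$ on generators (where $\overline{c}$ is the generator of $\mathcal{A}(\overline{\Lambda})$ corresponding to $c$), and check that $\overline{\varepsilon}_i$ is again a DGA-map using the transpose-reverses-order identity together with the fact that $\varepsilon_i\circ\partial=0$. The key point is that transposition is an anti-automorphism of $M_n(R)$, so applying it termwise to the reversed word defining $\partial'\overline{c}$ recovers the transpose of $\varepsilon_i(\partial c)=0$, hence $\overline{\varepsilon}_i(\partial'\overline{c})=0$. Note the order swap $\overline{\varepsilon}_1,\overline{\varepsilon}_0$ in the target: reversing words turns a path that reads augmentations left-to-right as $\varepsilon_0,\varepsilon_1$ into one that reads them as $\varepsilon_1,\varepsilon_0$, so the pair of augmentations labelling the linearised complex is swapped.

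Next I would build the chain isomorphism. Linearised Legendrian contact cohomology $LCC^\bullet(\Lambda)$ is, as an $R$-module, freely generated by the Reeb chords, and there is a grading-respecting bijection between generators of $\mathcal{A}(\Lambda)$ and of $\mathcal{A}(\overline{\Lambda})$; the Legendrian mirror operation also shifts degrees in the standard way (the Conley--Zehnder index of a chord of $\Lambda$ becomes its complementary index for $\overline{\Lambda}$), which is what produces the cohomological (as opposed to homological) matching of the complexes. I would send a generator $c$ to its mirror $\overline{c}$, possibly up to a sign depending on $|c|$, and verify that this intertwines the linearised differentials $d^{\varepsilon_0,\varepsilon_1}$ and $d^{\overline{\varepsilon}_1,\overline{\varepsilon}_0}$ by comparing the linear-in-generators parts of $\partial^{\boldsymbol\varepsilon}$ and $(\partial')^{\overline{\boldsymbol\varepsilon}}$ after developing with respect to the augmentations as in Subsection~\ref{ssec: augmentations and linearisations}; this is a term-by-term matching using the word-reversal plus transposition dictionary. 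For the upgraded statement about free $M_n(R)$-bimodules, I would observe that the identification $M^{\boxtimes n}\hookrightarrow (M^{\boxtimes n})^*$ and the bimodule structure~\eqref{bimodule via star} behave under transposition exactly so that left and right actions get interchanged — here one uses that the Hermitian involution $\star$ on $M_n(R)$ is transposition and $\mathfrak{t}(a,b)=\operatorname{tr}(b^\star a)$ is symmetric under it — so the natural $M_n(R)$-bimodule map is an isomorphism once we twist the bimodule structure on the target by the involution.

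The main obstacle I expect is pinning down the signs and the precise degree/grading conventions: getting the cohomological grading of the mirror complex to line up (rather than being off by a shift), and making sure the sign that accompanies $c\mapsto \overline{c}$ is compatible with the Koszul signs built into the linearised differential. The transposition/word-reversal bookkeeping over a noncommutative coefficient ring is conceptually clean but must be tracked carefully through the "developing with respect to an augmentation" step, where the augmentation values get substituted into arbitrary positions in a word; this is where I would be most careful to confirm that the $\varepsilon_0,\varepsilon_1 \leftrightarrow \varepsilon_1,\varepsilon_0$ swap is forced and not an artefact of a convention choice.
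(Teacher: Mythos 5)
Your proposal is essentially the paper's proof: the mirror's differential is the word-reversal $\iota\circ\partial$ on generators, the induced augmentations are $\overline{\varepsilon}_i(c)=(\varepsilon_i(c))^t$ (using that transposition is an anti-automorphism of $M_n(R)$, exactly matching the anti-automorphism of word reversal), and the reversal is what forces the swap to the pair $(\overline{\varepsilon}_1,\overline{\varepsilon}_0)$. One small correction: the canonical bijection between the Reeb chords of $\Lambda$ and $\overline{\Lambda}$ is grading-\emph{preserving} --- no complementary Conley--Zehnder index is involved (that phenomenon belongs to Sabloff-type duality for a single knot, not to the Legendrian mirror) --- so the generator-to-generator map already matches the cohomological gradings without any shift.
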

\begin{proof}
Recall that there is a canonical grading-preserving bijection between the set of generators of $\Lambda$ and $\overline{\Lambda}$. Under the corresponding identification $\mathcal{A}(\Lambda) \simeq \mathcal{A}(\overline{\Lambda})$ the differential of the latter takes the form $\partial'(c)=\iota \circ \partial(c)$ on the generators, where $\iota$ is the involution which reverses the letters in each word (this is an isomorphism from a free algebra to its opposite). The statement readily follows if we take $\overline{\varepsilon}_i$ to be defined by
\[ \overline{\varepsilon}_i(c):=\left(\varepsilon_i(c)\right)^t,\]
the latter denoting the transpose of a matrix in $M_n(R)$ (this is also an involution inducing an isomorphism from the ring of matrices to its opposite ring).
\end{proof}

\section{Directed systems and consistent sequences of DGAs}
Both directed systems and consistent sequences of differential
graded algebras appear naturally in applications. In this section we
discuss briefly how our constructions can be carried over to these
cases.

\subsection{The infinitely generated case: a directed system
of DGAs} The differential graded algebra considered up to this point
have all been finitely generated. For many of the applications that
we have in mind this is also sufficient. Namely,  the
Chekanov-Eliashberg DGA of a Legendrian submanifold $\Lambda$ is
generated by the Reeb chords of $\Lambda$, and a generic Legendrian
submanifold has finitely many Reeb chords in most contact manifolds
for which the Chekanov-Eliashberg DGA is rigorously defined. For
example, this is the case for closed Legendrian submanifolds of the
standard contact $\R^{2n+1}$.

Nonetheless, for a general contact manifold there may be infinitely
many Reeb chords on a generic closed Legendrian submanifold. In this
case then the Chekanov-Eliashberg DGA is infinitely generated, and
hence $M$ is a free $A$--$A$-bimodule with an infinite preferred
basis. However, note that to every Reeb chord we can associate an
\emph{action} $\ell \in \R_{>0}$, and generically all Reeb chords
below a certain action still comprise a finite subset. We write
$M^{\ell} \subset M$ for the free and finitely generated
$A$--$A$-bimodule spanned by the Reeb chords of action less than
$\ell>0$. We write $\mathcal{A}^\ell:= \mathcal{T}_A(M^{\ell})$, and
the action-decreasing property of the differential in the
Chekanov-Eliashberg DGA implies that each ${\mathcal A}^\ell$ is a
sub-DGA of ${\mathcal A}$, and therefore there is an induced
directed system
\[i_{\ell_1,\ell_2} \colon (\mathcal{A}^{\ell_1},\partial) \hookrightarrow
(\mathcal{A}^{\ell_2},\partial), \quad \ell_1 \le \ell_2,\] of
finitely generated differential graded algebras. The direct limit of
this directed system is the infinitely generated differential graded
algebra $(\mathcal{A}, \partial)$ and therefore we can reduce the
study of an infinitely generated graded algebra endowed with an
``action filtration'' as above to the study of directed systems of
finitely generated DGAs.

In this setting the $A_\infty$-categories obtained by applying the
constructions in Subsection \ref{subsec:case1} and
\ref{subsec:case2} to the direct system $({\mathcal A}^\ell,
\partial)$ form an inverse system; namely we have morphisms
\begin{gather*}
i_{\ell_1,\ell_2}^\vee \colon (M^{\ell_2})^\vee \to (M^{\ell_1})^\vee, \quad \ell_1 < \ell_2, \\
i_{\ell_1,\ell_2}^* \colon (M^{\ell_2})^* \to (M^{\ell_1})^*, \quad
\ell_1 < \ell_2.
\end{gather*}
Using the given choice of basis of $M$, the adjoint morphisms
$i_{\ell_1,\ell_2}^\vee$ and $i_{\ell_1,\ell_2}^*$ both correspond
to canonical projections onto the submodules spanned by the
generators having actions at most $\ell_1$. The linearised
coboundary maps $\mu^{\varepsilon_0,\varepsilon_1}$ (defined using
either of the constructions) makes the above inverse systems into
inverse systems of complexes; i.e.~the above projection maps are
chain maps. The Mittag-Leffler property be seen to hold for the
corresponding inverse system of boundaries, and hence the inverse
limits of homologies is equal to the homology of the inverse limit
complex.

The respective $A_\infty$-structures constructed for the above
inverse system of complexes can then seen to satisfy
$i_{\ell_1,\ell_2}^\vee \circ \mu_n=\mu_n \circ
((i_{\ell_1,\ell_2}^\vee)^{\otimes n})$ and $i_{\ell_1,\ell_2}^*
\circ \mu_n=\mu_n \circ ((i_{\ell_1,\ell_2}^*)^{\boxtimes n})$. This
gives rise to an $A_\infty$-structure on the inverse limits of
$(M^{\ell})^\vee$ and $(M^{\ell})^*$.

\subsection{Consistent sequences of DGAs}
\label{sec:cons-sequ-dga-1}

The construction of $\mathfrak{A}_n$ and $\mathcal{A}_{A_n}$ in
Section \ref{sec:semisimple} out of $\mathcal{A}$ leads to families
of differential graded algebras with an increasing number of
generators. Such sequences were used in \cite{augcat} to upgrade the
$A_\infty$ algebra structure from \cite{Productstructure} to an
$A_\infty$ category whose objects are the augmentations of
$\mathcal{A}$. This idea was later generalised in \cite{NRSSZ},
where the notion of a \textit{consistent} family of differentiable
graded algebras was introduced. Here we briefly describe this notion
and show how it also gives rise to $A_\infty$-categories with
noncommutative coefficients. The geometrical construction underlying
this algebraic definition will be sketched in Appendix
\ref{sec:cons-sequ-dga}.

Let $(\mathcal{A},\partial)$ be a semifree differential graded
algebra over the noncommutative algebra $A$. Its underlying algebra
is thus the tensor algebra $\mathcal{T}_A(M)$ over $A$ of a free
$A$--$A$-bimodule $M$ with basis ${\mathcal B}$. An
\textit{(m-components) link  grading} (as introduced in
\cite{Mis_grading}) on $\mathcal{A}$ is a pair of maps $b,e \colon
{\mathcal B} \rightarrow \{1,\hdots, m\}$ such that:
\begin{itemize}
\item If $c \in {\mathcal B}$ is such that $b(c)\not = e(c)$ then
$\partial(c)$ has no constant term, and
\item For any $c \in {\mathcal B}$ and any word $a_0c_1a_1\cdots
  a_{n-1}c_na_n$ appearing in an expression of $\partial(c)$, we have
  $b(c_{i-1})=e(c_{i})$.
\end{itemize}

A generator $c \in {\mathcal B}$ is called \textit{pure} if
$b(c)=e(c)$ and {\em mixed} otherwise. On $\mathfrak{A}_n$ and
${\mathcal A}_{A_n}$ there is a link grading defined by
$b(c^{ij})=i$ and $e(c^{ij})=j$. Moreover, words $a_0c_1a_1\cdots
a_{n-1}c_na_n$ such that $b(c_{i-1})=e(c_{i})$ (i.e. of the type
appearing in the differential of a basis element) are called
\textit{composable} in \cite{EffectLegendrian}, \cite{augcat} and
\cite{NRSSZ}. This terminology comes from the Chekanov-Eliashberg
algebra of an $m$-components Legendrian link: the components are
labeled by $\{1,\hdots,m\}$, and the maps $b$ and $e$ give the label
of the component of the starting point and endpoint of a Reeb chord
of the link. Composable words are those which can appear as negative
asymptotics of a holomorphic disc with boundary on the cylinder over
the link.

Let $(\mathcal{A},\partial)$ be a differential graded algebra
equipped with a link grading $(b,e)$ and let $I$ be a subset of
$\{1,\hdots,m\}$. We denote by $\mathcal{A}_I$ the subalgebra of
$\mathcal{A}$ generated by basis elements $c$ for which
$b(c),e(c)\in I$. There is a projection $\pi \colon {\mathcal A} \to
{\mathcal A}_I$ such that, for every basis element, $\pi(c) = c$ if
$(b(c), e(c)) \in I \times I$, and $\pi(c)=0$ otherwise. It follows
from the definition of a link grading that $\partial$ descends to a
differential $\partial_I = \pi \circ \partial$ on $\mathcal{A}_I$.
For $m$-components Legendrian links this corresponds to taking
chords of the sub-link whose components are labeled by $I$ and
defining a differential which counts only holomorphic discs which
are asymptotic to
 chords in this sublink.
Note that the differential graded algebra $\mathcal{A}_I$ is
equipped with a link grading once we identify $I$ with $\{1,\hdots,
l\}$ by an order preserving identification. When $I=\{i\}$ we denote
$\mathcal{A}_I$ simply by $\mathcal{A}_i$.

We give now the definition of a consistent family of differential
graded algebras following \cite{NRSSZ}.
\begin{defn}
A sequence $(\mathcal{A}^{(i)},\partial^{(i)})$ of semi-free
differential graded algebras with generating sets ${\mathcal
B}^{(i)}$ and link gradings $(b^{(i)},e^{(i)})$ taking values in
$\{1, \ldots, i \}$ is \emph{consistent} if the following properties
are satisfied.
\begin{enumerate}
\item For every increasing map $f \colon \{1,\hdots, i\}\rightarrow \{1,\hdots,j\}$
there is an induced map $h_f \colon {\mathcal B}^{(i)}\rightarrow
{\mathcal B}^{(j)}$ such that, for any generator $c \in {\mathcal
B}^{(i)}$, we have
$$(b^{(j)}(h_f(c)),e^{(j)}(h_f(c)))=(f(b^{(i)}(c)),f(e^{(i)}(c))).$$
\item For any two composable increasing maps $f$ and $g$ between finite sets,
we have $h_{f\circ g}=h_f\circ h_g$.
\item For any $f$ as above, the algebra morphism $h_f \colon \mathcal{A}^{(i)}
\rightarrow \mathcal{A}^{(j)}$ coinciding with $h_f$ on generators
satisfies the property that $\pi\circ
h_f:\mathcal{A}^{(i)}\rightarrow \mathcal{A}^{(j)}_{f(\{1,\hdots
i\})}$ is an isomorphism of differential graded algebras.
\end{enumerate}
\end{defn}
Note that increasing maps from $\{1,\hdots, i\}$ to $\{1,\hdots,
j\}$ are in one-to-one correspondence with subsets of $\{1,\hdots,
j\}$ of cardinality $i$.  Figure \ref{fig:3-6copy}
shows the geometric meaning of the maps $h_f$ when ${\mathcal
A}^{(i)}$ is the Chekanov-Eliashberg of the $i$-copy link of a
Legendrian submanifold. See Appendix \ref{sec:posit-augm-repr}.

The upshot of this definition is that, since $\mathcal{A}^{(1)}$ is
isomorphic to $\mathcal{A}^{(i+1)}_{k}$ for any $k\in
\{1,\hdots,i+1\}$, any $(i+1)$-tuple of augmentations
$(\varepsilon_0, \ldots, \varepsilon_i)$ of $\mathcal{A}^{(1)}$
gives rise to an augmentation $\boldsymbol{\varepsilon}$ of
$\mathcal{A}^{(i+1)}$ which vanishes on the mixed generators and
satisfies $\boldsymbol{\varepsilon}(c) = \varepsilon_k(c)$ for any
$c \in {\mathcal A}^{(i+1)}_k \cong {\mathcal A}^{(1)}$.

\begin{figure}[t]
\labellist \pinlabel $d_{2,1}$ at 140 278 \pinlabel $h_{\{2,5,6\}}$
at 370 294 \pinlabel $d_{5,2}$ at 579 278
\endlabellist
  \centering
  \includegraphics[width=\textwidth]{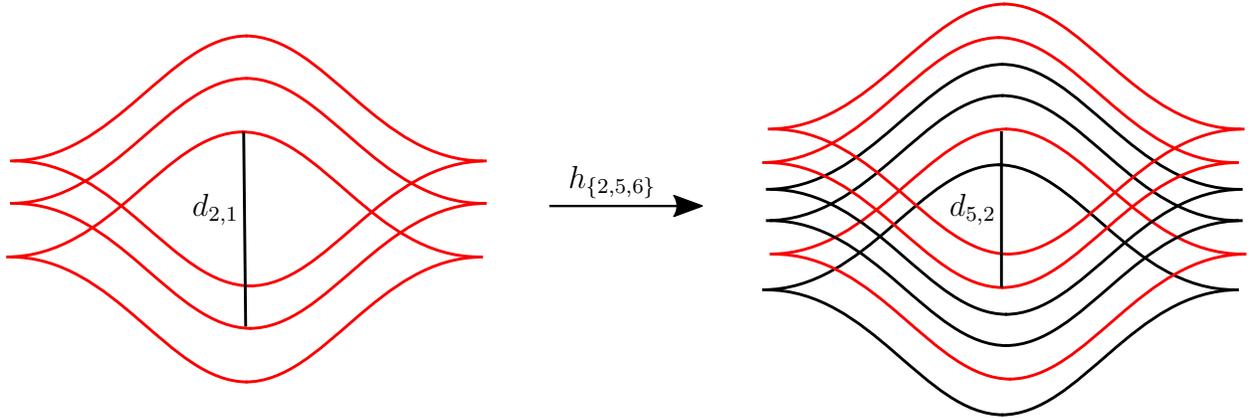}
  \caption{The $3$-copies link as a sublink of the $6$-copy link.}
  \label{fig:3-6copy}
\end{figure}

We denote by $M^{(i+1)}$ the free bimodule generated by ${\mathcal
B}^{(i+1)}$. Also for any subset $I$ of $\{1,\hdots,j\}$ of
cardinality $i+1$, we denote by $M^{I}$ the corresponding submodule
of $M^{(j)}$ (which is identified with $M^{(i+1)}$). We decompose
each differential $\partial^{(i+1)}$ restricted to $M^{(i+1)}$ into
a sum
$$\partial^{(i+1)}|_{M^{(i+1)}}= \partial_0^{(i+1)}+\cdots +\partial_k^{(i+1)},$$
where $\partial_l^{(i+1)}$ takes values in $(M^{(i+1)})^{\boxtimes
l}$. Now, given an $(i+1)$-tuple of augmentations of
$\mathcal{A}^{(1)}$, inducing an augmentation
$\boldsymbol{\varepsilon}$ of $\mathcal{A}^{(i+1)}$, we define the
operation $\mu^{\epsilon_0, \ldots, \epsilon_i}_{i}$ as follows. We
consider the map

\begin{equation}
\label{eq:3} \xymatrixrowsep{0.2in} \xymatrixcolsep{0.5in}
\xymatrix{
      M^{(2)}\ar[r]^{h_{\{1,i+1\}}} & M^{(i+1)} \ar[r]^{(\partial^{(i+1)})^{\boldsymbol{\varepsilon}}_i} & (M^{(i+1)})^{\boxtimes i} \ar[d]^{\pi} \\
& & M^{(i+1)}_{\{i,i+1\}}\boxtimes M^{(i+1)}_{\{i-1,i\}}\boxtimes
\ldots
\boxtimes M^{(i+1)}_{\{1,2\}} \ar[d]^{\simeq} \\
& & (M^{(2)})^{\boxtimes i}.}
\end{equation}

The map $\pi \colon (M^{(i+1)})^{\boxtimes i} \to
M^{(i+1)}_{\{i,i+1\}} \boxtimes \ldots \boxtimes
M^{(i+1)}_{\{1,2\}}$ is the restriction of the canonical projection
$\pi \colon \mathcal{A}^{(i+1)} \to
\mathcal{A}^{(i+1)}_{\{i,i+1\}}\star\ldots\star\mathcal{A}^{(i+1)}_{\{1,2\}}$.

This allows us to define $A_\infty$-categories whose objects are
augmentations of $\mathcal{A}^{(1)}$, the morphism space between any
pair of augmentation is a copy of $M^{(2)}$, and the compositions
are defined by taking adjoints of the maps $M^{(2)} \to
(M^{(2)})^{\boxtimes i}$ defined in \eqref{eq:3}, using the
construction from either Subsection \ref{subsec:case1} or
\ref{subsec:case2}.

Note that the procedure described in Section \ref{sec:semisimple}
which associates to $\mathcal{A}$ and $n$ the differential graded
algebra $\mathfrak{A}_n$ produces a consistent sequence of
differential graded algebras whose link grading is
$(b(c^{ij}),e(c^{ij}))=(i,j)$. This sequence has the property that
$M^{(2)} \cong M^{(1)} = M$, and therefore the augmentation category
defined from it contains the same information as the differential
graded algebra ${\mathcal A}$. However, there exist consistent
sequences containing strictly more information than simply that
contained in $\mathcal{A}=\mathcal{A}^{(1)}$. For instance, even in
the case when ${\mathcal A}$ is finitely generated, an infinite
consistent sequence may still give rise an $A_\infty$-category with
nontrivial operations of arbitrarily high order. In Appendix
\ref{sec:cons-sequ-dga} we sketch the geometric construction of
\cite{NRSSZ}, which illustrates such a phenomenon.

\appendix
\section{The geometric setting}

In this appendix, we  discuss the geometric motivation of our
constructions.

\subsection{The Legendrian contact homology with twisted
coefficients} \label{sec:legendr-cont-homol}

Here we give a very brief sketch of the construction of the
differential graded algebras that arise in Legendrian contact
homology. We refer to \cite{Chekanov_DGA_Legendrian} and
\cite{LCHgeneral} for more details. Let  $\Lambda \subset
(Y,\alpha)$ be a Legendrian submanifold in a manifold $Y$ with a
contact one-form $\alpha$. The contact one-form induces the Reeb
vector field on $Y$. A \emph{Reeb chord} of $\Lambda$ is an integral
curves of the Reeb vector field with  both endpoints on $\Lambda$.
Generically, the Reeb chords form a discrete set.

We denote $M_R(\Lambda)$ the free graded $R$-module generated by the
Reeb chords of $\Lambda$. The grading is induced by the
Conley-Zehnder index of the chords and, in general, takes values in
a cyclic group. For now on we assume that there is only a finite
number of Reeb chords.

In the most basic setting, Legendrian contact homology associates a
differential graded algebra structure on the tensor algebra
$\mathcal{T}_R(M_R(\Lambda))$ over $R$; this is the
Chekanov-Eliashberg DGA. The differential of a Reeb chord $d_0$
counts the rigid pseudoholomorphic punctured discs in the
symplectisation $\R \times Y$, having boundary on the Lagrangian
cylinder $\R \times \Lambda$ over $\Lambda$ and one positive
strip-like end asymptotic to $d_0$. For example, a pseudoholomorphic
disc as shown in Figure \ref{fig:curve} gives a contribution to the
coefficient in front of the word $d_1d_2d_3d_4d_5 \in
(M_R(\Lambda))^{\otimes 5}$ in the expression of $\partial(d_0)$.

In \cite{Albin} Eriksson-\"{O}stman extends the definition of the
Chekanov-Eliashberg DGA to a version with coefficients in the group
ring $A:=R[\pi_1(\Lambda)]$. In this case the underlying graded
algebra is the tensor algebra $\mathcal{T}_A(M_A(\Lambda))$, where
$M_A(\Lambda)$ is the free graded $A$--$A$-bimodule generated by the
Reeb chords of $\Lambda$. Fix a choice of capping paths for each
end-point of a Reeb chord, i.e.~a path in $\Lambda$ connecting the
end-point with a given base point. Roughly speaking, the
differential then takes the following form. Assume that
$a_0,a_1,\hdots,a_5 \in \pi_1(\Lambda)$ in Figure \ref{fig:curve}
denote the homotopy classes of closed curves corresponding to the
canonically oriented boundary arcs in $\R \times \Lambda$ of a
pseudoholomorphic disc contributing to the differential, where each
boundary arc has been closed up by using the corresponding capping
path. Then the depicted disc contributes to the $R$-coefficient in
front of $a_0 d_1 a_1 d_2a_2d_3a_3d_4a_4d_5a_5 \in
(M_A(\Lambda))^{\boxtimes 5}$ in the expression $\partial(d_0)$.

\begin{Rem}
Note that the above construction requires that we introduce
auxiliary capping paths in $\Lambda$ connecting each end-point of a
Reeb chord with a given base point. It would be more natural to
replace the fundamental group by the fundamental groupoid, and
consider differential graded algebras over a groupoid algebra. This
generalisation does not require any new idea, but for simplicity of
notation we will only consider group algebras.
\end{Rem}

\subsection{$\mathcal{A}_\infty$-categories of a Legendrian link.}
\label{sec:cons-sequ-dga} The construction of Section
\ref{sec:cons-sequ-dga-1} allows us to define several versions of
$\mathcal{A}_\infty$-categories coming from various consistent
sequences of DGAs.

\subsubsection{The ``negative'' augmentation and representation
categories.} \label{sec:negat-augm-repr}

Let $\Lambda$ be a Legendrian submanifold such that its
Chekanov-Eliashberg DGA (over the appropriate coefficient algebra)
admits augmentations. Then, by applying the constructions of
Subsections \ref{subsec:case1} and \ref{subsec:case2}, we associate
to $\Lambda$ three $A_\infty$-categories which generalise the
augmentation category of \cite{augcat} to the noncommutative
setting. In the terminology of \cite{NRSSZ} these are the
``negative'' augmentation categories.

\paragraph{\textbf{The category $\mathcal{A}\mathrm{ug}_-(\Lambda,R[\pi_1(\Lambda)])$.}}
\label{sec:aug_-}

Let $\mathcal{A}(\Lambda)$ be the Chekanov-Eliashberg algebra of
$\Lambda$ as defined in Appendix~\ref{sec:cons-sequ-dga} with
coefficients in $R[\pi_1(\Lambda)]$. We define
$\mathcal{A}\mathrm{ug}_-(\Lambda,R[\pi_1(\Lambda)])$ as the
$\mathcal{A}_\infty$-category such that:
\begin{enumerate}
\item $\mathcal{O}b(\mathcal{A}\mathrm{ug}_-(\Lambda,R[\pi_1(\Lambda)]))$
is the set of augmentations  $\varepsilon \colon \mathcal{A} \to
R[\pi_1(\Lambda)]$.
\item For every pair of augmentations $\varepsilon_1,\varepsilon_2\in
  \mathcal{O}b(\mathcal{A}\mathrm{ug}_-(\Lambda,R[\pi_1(\Lambda)]))$, the morphism space
  $\Hom(\varepsilon_1,\varepsilon_2)$ is the free
  $R[\pi_1(\Lambda)]$--$R[\pi_1(\Lambda)]$-bimodule generated by the Reeb
  chords of $\Lambda$.
\item The operations $\mu_n$, $n \ge 1$, are the
  $R[\pi_1(\Lambda)]$--$R[\pi_1(\Lambda)]$-bimodule
  maps $$\mu_n:\Hom(\varepsilon_{n-1},\varepsilon_{n})\boxtimes
  \Hom(\varepsilon_{n-2},\varepsilon_{n-1})\boxtimes\cdots\boxtimes
  \Hom(\varepsilon_0,\varepsilon_1)\rightarrow
  \Hom(\varepsilon_0,\varepsilon_n)$$ defined in Subsection
  \ref{subsec:case2}.
\end{enumerate}

The fact that $\mathcal{A}\mathrm{ug}_-(\Lambda,R[\pi_1(\Lambda)])$
is an $\mathcal{A}_\infty$-category comes from Theorem
\ref{thm:caseII}. Let $d_2\in\Hom(\varepsilon_2,\varepsilon_3)$ and
$d_4\in\Hom(\varepsilon_1,\varepsilon_2)$. Assuming that
$\varepsilon_2(d_3)=\sum a_kh_k$, the disc in Figure \ref{fig:aug-}
gives a contribution of
$a_kg_1^{-1}\varepsilon_3(d_1)^*g_0^{-1}d_0g_5^{-1}\varepsilon_1(d_5)^*g_5^{-1}$
to $\mu_2(d_2 h_kd_4)$ for all $k$.

\begin{figure}[ht]
\labellist \pinlabel $d_0$ at 100 123 \pinlabel $\varepsilon_3(d_1)$
at 10 -6 \pinlabel $d_2$ at 55 -6 \pinlabel $\varepsilon_2(d_3)$ at
101 -6 \pinlabel $d_4$ at 145 -6 \pinlabel $\varepsilon_1(d_5)$ at
190 -6 \pinlabel $g_0$ at 33 65 \pinlabel $g_1$ at 32 14 \pinlabel
$g_2$ at 78 14 \pinlabel $g_3$ at 122 14 \pinlabel $g_4$ at 168 14
\pinlabel $g_5$ at 167 65
\endlabellist
  \centering
  \includegraphics{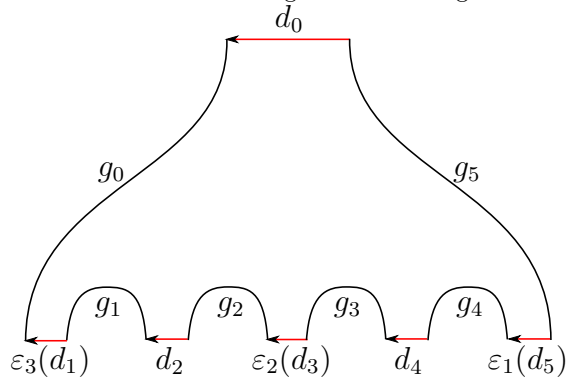}
  \vspace{5mm}
  \caption{A pseudoholomorphic disc contributing to $\mu_2 \colon \Hom(\varepsilon_2,
\varepsilon_3) \boxtimes\Hom(\varepsilon_1,\varepsilon_2)\rightarrow
\Hom(\varepsilon_1, \varepsilon_3)$.}
  \label{fig:aug-}
\end{figure}

\paragraph{\textbf{The category $\mathcal{R}\mathrm{ep}(\Lambda,m)$.}}
\label{sec:repre}
 Let $\mathcal{A}(\Lambda)$ be the Chekanov-Eliashberg algebra of $\Lambda$
with coefficients in $R$. For $m\in \mathbb{N}$, we denote by
$M_m(R)$ the algebra of $m \times m$ matrices with entries in $R$.
We define $\mathcal{R}\mathrm{ep}(\Lambda,m)$ as the
$\mathcal{A}_\infty$-category such that:
\begin{enumerate}
\item $\mathcal{O}b(\mathcal{R}\mathrm{ep}(\Lambda,m))$ is the set of
  augmentations $\rho \colon \mathcal{A} \to M_m(R)$ (called $m$-dimensional
  representations of $\Lambda$).
\item For every pair of augmentations $\rho_1,\rho_2\in
 \mathcal{O}b(\mathcal{R}\mathrm{ep}(\Lambda,m))$, the morphism space\\
$\Hom(\rho_1,\rho_2)$ is the free $M_m(R)$--$M_m(R)$-bimodule
generated by Reeb chords of  $\Lambda$.
\item The operations $\mu_n$, $n \ge 1$, are the $M_m(R)$--$M_m(R)$-bimodule
  maps $$\mu_n \colon \Hom(\rho_{n-1},\rho_{n})\boxtimes
  \Hom(\rho_{n-2},\rho_{n-1})\boxtimes\cdots\boxtimes
  \Hom(\rho_0,\rho_1)\rightarrow \Hom(\rho_0,\rho_n)$$ defined in
  Subsection \ref{subsec:case2}.
\end{enumerate}

 From Theorem~\ref{thm:caseII} it follows that $\mathcal{R}\mathrm{ep}(\Lambda,m)$ is an
$\mathcal{A}_\infty$-category. Assuming that $\rho_2(d_3)$ is the
matrix with coefficients $(a_{ij})$ and $E_{i,j}$  denotes the
elementary matrix with entries $(\delta_{i,j})$, then the
pseudoholomorphic disc in Figure \ref{fig:rep-} gives a contribution
of $a_{ij}\rho_3(d_1)^*d_0 \rho_1(d_5)^*$ to $\mu_2(d_2 E_{i,j}d_4)$
for all $i,j$.

\paragraph{\textbf{The category $\mathcal{R}\mathrm{ep}_-(\Lambda,S)$.}}
\label{sec:genrep} Given a noncommutative $R$-algebra $S$, we define
the category $\mathcal{R}\mathrm{ep}(\Lambda,S)$ such that:

\begin{figure}[ht]
\labellist \pinlabel $d_0$ at 100 123 \pinlabel $\rho_3(d_1)$ at 10
-6 \pinlabel $d_2$ at 55 -6 \pinlabel $\rho_2(d_3)$ at 101 -6
\pinlabel $d_4$ at 145 -6 \pinlabel $\rho_1(d_5)$ at 190 -6
\endlabellist
  \centering
  \includegraphics{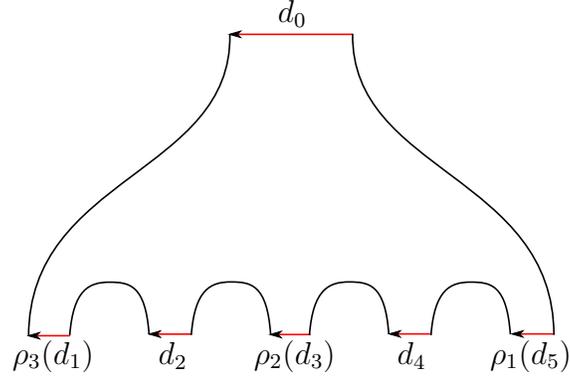}
  \vspace{5mm}
  \caption{A pseudoholomorphic disc contributing to $\mu_2 \colon \Hom(\rho_2,\rho_3)\boxtimes \Hom(\rho_1,\rho_2)\rightarrow \Hom(\rho_1,\rho_3)$.}
  \label{fig:rep-}
\end{figure}

\begin{enumerate}
\item $\mathcal{O}b(\mathcal{R}\mathrm{ep}(\Lambda,S))$ consists of
augmentations $\rho \colon \mathcal{A}(\Lambda) \to S$.
\item For every pair of augmentations $\rho_1,\rho_2\in
 \mathcal{O}b(\mathcal{R}\mathrm{ep}(\Lambda,m))$,  morphism space
$\Hom(\rho_1,\rho_2)$ is the free $S$-module
 generated by the Reeb chords of $\Lambda$
\item The operations $\mu_n$, $n \ge 1$, are the $R$-linear maps
$$\mu_n \colon \Hom(\rho_{n-1},\rho_{n})\otimes \Hom(\rho_{n-2},\rho_{n-1})\otimes\cdots
\otimes \Hom(\rho_0,\rho_1)\rightarrow \Hom(\rho_0,\rho_n)$$ defined
in Subsection \ref{subsec:case1}.
\end{enumerate}
We can also make hybrid constructions where the Chekanov-Eliashberg
algebra is defined over the group ring of $\pi_1(\Lambda)$ and the
augmentations take values in a matrix algebra $M_m(R)$ or in a more
general noncommutative algebra $S$.

\subsubsection{The ``positive'' augmentation and representation
categories.}\label{sec:posit-augm-repr}

The consistent sequence leading to
$\mathcal{A}\mathrm{ug}_-(\Lambda)$ is determined uniquely by
$\mathcal{A}(\Lambda)$ as described in \cite{augcat} (see the
discussion at the end of Section \ref{sec:cons-sequ-dga-1}) and does
not require the language of consistent sequences of DGAs. However,
additional geometric input is needed in order to define
$\mathcal{A}\mathrm{ug}_+$. For a Legendrian submanifold $\Lambda$
in the jet space $J^1(Q)$, we denote by $\Lambda_n$ the Legendrian
link obtained by taking a generic small perturbation of $n$-copies
of $\Lambda$, which we denote by $\Lambda_1,\hdots, \Lambda_n$,
shifted in the direction of the Reeb vector field by less than the
length of the smallest Reeb chords of $\Lambda$ divided by $n$. Note
that for any chord $c$ of $\Lambda$ and pair $(i,j)$ with
$i,j\in\{1,\hdots,n\}$ there is a corresponding chord $d_{i,j}$ of
$\Lambda_n$ starting on the $i$-th copy of $\Lambda$ and ending on
the $j$-th copy.

The description of the consistent sequence requires  some care in
the choice of the perturbation of the $n$-copy link. Let
$\{f_n\}_{n\in \mathbb{N}}$ be a family of Morse functions on $S^1$
each of which has only two critical points $m_n$ and $M_n$ and
satisfies the following conditions:
\begin{enumerate}
\item for any $n$, there is an inclusion of the oriented intervals $(M_{n+1},m_{n+1})
\subset  (M_{n},m_n)$ (i.e. the critical points are
\textit{nested}).
\item For any $i<j$, $f_i-f_j$ is a Morse function with only two critical points:
one maximum contained in $(M_i,M_j)$ and one minimum in $(m_j,m_i)$.
\end{enumerate}
\begin{Rem}
  The existence of sequences of Morse functions whose critical points
  share similar combinatorics is one technical point that need be
  resolved to extend the definition of $\mathcal{A}\mathrm{ug}_+$ to Legendrian
  submanifolds of high dimensions. Some notion of coherent system of
  perturbations in the spirit of (but not exactly like)
  \cite{Seidel_Fukaya} would allow one to extend this setup and get a
  well defined category with the correct invariance properties.
\end{Rem}

Using this perturbation, any chord of the $i$-copy link becomes a
chords of the $j$-copy link when $\{1,\hdots,i\}$ is a subset of
$\{1,\hdots,j\}$. The Chekanov-Eliashberg DGAs associated to this
sequence of Legendrian links form a consistent sequence of
differential graded algebras where $\mathcal{A}^{(i)}$ is the
Chekanov-Eliashberg DGA of the link $\Lambda_i$. The link grading is
defined by taking the beginning and end of Reeb chords; see Figure
\ref{fig:3-6copy}.

This consistent sequence of differential graded algebra allows us to
define ``plus'' variants of the previous ``minus'' augmentation
categories that we now proceed to outline.

\paragraph{\textbf{The categories $\mathcal{A}\mathrm{ug}_+(\Lambda,R[\pi_1(\Lambda)])$ and $\mathcal{R}\mathrm{ep}_+(\Lambda,n)$.}}
\label{Aug+} The objects are the same as in\\
$\mathcal{A}\mathrm{ug}_-(\Lambda,R[\pi_1(\Lambda)])$ and
$\mathcal{R}\mathrm{ep}(\Lambda,n)$ but the morphisms space are
generated by Reeb chords of $\Lambda$ and critical points of $f_1$.
They can be identified to mixed chords in $\Lambda_2$ and therefore
the morphism spaces correspond to $M^{(2)}$ in the language of
Section \ref{sec:cons-sequ-dga-1}. The operations are defined by
applying the dualisation process of subsection \ref{subsec:case2} to
the maps given by equation \eqref{eq:3}.

Figures \ref{fig:aug-} and \ref{fig:rep-} show again how to compute
such operation: note that in this situation the (non-augmented)
chords $d_2$ and $d_4$ are mixed chords of the $3$-copy link  and
can come from critical points of the functions $f_i-f_j$, the
remaining (augmented) chords $d_1,d_3$ and $d_5$ are pure chords.

\paragraph{\textbf{The category $\mathcal{R}\mathrm{ep}_+(\Lambda,S)$}.}
\label{sec:genrep-1}

This is a similar generalisation of
$\mathcal{R}\mathrm{ep}_-(\Lambda,S)$. Again the contribution of a
pseudoholomorphic disc is as shown in Figure \ref{fig:curve}, with
the understanding that possibly some non-augmented chords are mixed
(and possibly come from Morse critical points).

\begin{Rem}
It follows from the argument in \cite{NRSSZ} that the ``plus''
categories admits a strict unit (for some particular choices of
almost-complex structures), where the unit is given by $m$. Though
this unit is not necessarily closed in the cases
$\mathcal{A}\mathrm{ug}_+(\Lambda,R[\pi_1(\Lambda)])$ and
$\mathcal{R}\mathrm{ep}_+(\Lambda,n)$ (Case II i.e.~Section
\ref{subsec:case2}), it can be seen to be closed in the case of
$\mathcal{R}\mathrm{ep}_+(\Lambda,S)$ (Case I i.e.~Section
\ref{subsec:case1}); see \cite[Theorem 5.5]{Duality_EkholmetAl}. For
instance, in the case of $\mathcal{R}\mathrm{ep}(\Lambda,n)$, it
follows from the description of holomorphic disks having negative
ends asymptotic to the minimum chord $m$ that the boundary of $m$ is
equal to $\sum_{c}\rho(c)^T\cdot m-m\cdot\rho(c)^T$. Due to the
noncommutativity, the latter expression is possibly nonvanishing
whenever $\rho(c)$ is. Note that the construction in Section
\ref{subsec:case1} (Case I) still makes the dual of $m$ a cycle in
this case.
\end{Rem}

\begin{Rem}
It is possible to define $\mathcal{A}\mathrm{ug}_-$ by noting that
if $(b,e)$ is a link grading then $(e,b)$ is also a link grading.
Applying this change to the link grading of $\lambda_n$, the
consistent sequence leading to $\mathcal{A}\mathrm{ug}_+$ gives the
consistent sequence leading to $\mathcal{A}\mathrm{ug}_-$ because
Reeb chords corresponding to critical points if the Morse functions
go in the wrong direction.
\end{Rem}

\subsection{A note about invariance}
The invariance properties satisfied by the constructions carried out
in this paper will not be discussed in detail. Chekanov-Eliashberg
DGA's  are invariant up to so-called ``stable-tame isomorphism''.
From this it is not difficult to see that the set of isomorphism
classes of linearised homologies is invariant, as it was originally
shown in \cite{Chekanov_DGA_Legendrian}. The fact that the
coefficients are noncommuting plays no important role in that proof,
and therefore the same result holds in the current setting as well.

+Similarly the dual complexes constructed in Subsections
\ref{subsec:case1} and \ref{subsec:case2}  satisfy the following
invariance property. Consider the so-called ``bilinearised
co-complexes'' $(M^\vee,\mu_1^{(\varepsilon_1,\varepsilon_2)})$ from
Subsection \ref{subsec:case1} or
$(M,\mu_1^{(\varepsilon_1,\varepsilon_2)})$ from Subsection
\ref{subsec:case2}.  The isomorphism classes of their homologies for
all possible pairs of augmentations $(\varepsilon_1,\varepsilon_2)$
of ${\mathcal A}$ into $A$ is then invariant under stable-tame
isomorphism of the differential graded algebra ${\mathcal A}$. In
fact, since homotopic augmentations (in the sense of DGA morphisms)
induce the same bilinearised (co)complex, this set of isomorphism
classes is even invariant under DGA homotopies (see \cite[Chapter
26]{DGA} for the definition). We refer to \cite[Theorem
2.8]{Bourgeois_Survey} for the proof of a similar statement.

For the invariance properties of the augmentation
$A_\infty$-categories we refer to \cite{augcat}, which handles the
case when $A$ is commutative. It is shown there that the
$A_\infty$-categories associated to two stable-tame isomorphic DGAs
are ``pseudo equivalent'' (see the mentioned article for this
notion). The general case follows similarly. Finally, the invariance
of $\mathcal{A}\mathrm{ug}_+(\Lambda)$ up to quasi-equivalence
follows again from the stable tame isomorphism class of
$\mathcal{A}(\Lambda)$ as shown in \cite{NRSSZ} and can be easily
generalised to noncommutative coefficients.

\bibliographystyle{plain}
\bibliography{Bibliographie_en}

\def\cprime{$'$} \def\cprime{$'$} \def\cprime{$'$} \def\cprime{$'$}
  \def\cprime{$'$} \def\polhk#1{\setbox0=\hbox{#1}{\ooalign{\hidewidth
  \lower1.5ex\hbox{`}\hidewidth\crcr\unhbox0}}}
\begin{thebibliography}{10}

\bibitem{Bourgeois_Survey}
F.~Bourgeois.
\newblock A survey of contact homology.
\newblock In {\em New perspectives and challenges in symplectic field theory},
  volume~49 of {\em CRM Proc. Lecture Notes}, pages 45--71. Amer. Math. Soc.,
  Providence, RI, 2009.

\bibitem{augcat}
F.~Bourgeois and B.~Chantraine.
\newblock Bilinearized {L}egendrian contact homology and the augmentation
  category.
\newblock {\em J. Symplectic Geom.}, 12(3):553--583, 2014.

\bibitem{EffectLegendrian}
F.~Bourgeois, T.~Ekholm, and Y.~Eliashberg.
\newblock Effect of {L}egendrian surgery.
\newblock {\em Geom. Topol.}, 16(1):301--389, 2012.

\bibitem{cartan1954groupes}
H.~Cartan.
\newblock Sur les groupes d'{E}ilenberg-{M}ac{L}ane h ({II}, n): {I}. {M}ethode
  des constructions.
\newblock {\em Proceedings of the National Academy of Sciences of the United
  States of America}, 40(6):467, 1954.

\bibitem{Cthulhu}
B.~Chantraine, G~Dimitroglou~Rizell, P.~Ghiggini, and R.~Golovko.
\newblock Floer theory for {L}agrangian cobordisms.
\newblock {P}reprint (2015), available at
  \texttt{http://arxiv.org/abs/1511.09471}.

\bibitem{Chekanov_DGA_Legendrian}
Yu.~V. Chekanov.
\newblock Differential algebra of {L}egendrian links.
\newblock {\em Invent. Math.}, 150(3):441--483, 2002.

\bibitem{Cho}
C.-H. Cho, H.~Hong, and L.~Siu-Cheong.
\newblock Noncommutative homological mirror functor.
\newblock {P}reprint (2015), available at
  \texttt{http://arxiv.org/abs/1512.07128}.

\bibitem{Productstructure}
G.~Civan, P.~Koprowski, J.~Etnyre, J.~M. Sabloff, and A.~Walker.
\newblock Product structures for {L}egendrian contact homology.
\newblock {\em Math. Proc. Cambridge Philos. Soc.}, 150(2):291--311, 2011.

\bibitem{Cluster}
O.~Cornea and F.~Lalonde.
\newblock Cluster homology.
\newblock {P}reprint (2005), available at
  \texttt{http://arxiv.org/abs/0508345}.

\bibitem{Damian_Lifted}
M.~Damian.
\newblock Floer homology on the universal cover, {A}udin's conjecture and other
  constraints on {L}agrangian submanifolds.
\newblock {\em Comment. Math. Helv.}, 87(2):433--462, 2012.

\bibitem{Characteristic}
G.~Dimitroglou~Rizell.
\newblock Nontriviality results for the characteristic algebra of a {DGA}.
\newblock {P}reprint (2015), available at
  \texttt{http://arxiv.org/abs/1512.03570}.

\bibitem{Caps}
G.~Dimitroglou~Rizell.
\newblock Exact {L}agrangian caps and non-uniruled {L}agrangian submanifolds.
\newblock {\em Arkiv f\"{o}r Matematik}, pages 1--28, 2014.

\bibitem{EstimNumbrReebChordLinReprCharAlg}
G.~Dimitroglou~Rizell and R.~Golovko.
\newblock Estimating the number of {R}eeb chords using a linear representation
  of the characteristic algebra.
\newblock {\em Algebr. Geom. Topol.}, 15(5):2887--2920, 2015.

\bibitem{dummit2004abstract}
D.~S. Dummit and R.~M. Foote.
\newblock Abstract algebra, 3rd edition, 2004.

\bibitem{LCHgeneral}
T.~Ekholm, J.~Etnyre, and M.~Sullivan.
\newblock Legendrian contact homology in {$P\times\mathbb{R}$}.
\newblock {\em Trans. Amer. Math. Soc.}, 359(7):3301--3335 (electronic), 2007.

\bibitem{Duality_EkholmetAl}
T.~Ekholm, J.~B. Etnyre, and J.~M. Sabloff.
\newblock A duality exact sequence for {L}egendrian contact homology.
\newblock {\em Duke Math. J.}, 150(1):1--75, 2009.

\bibitem{Ekholm_Ng_subcritical}
T.~Ekholm and L.~Ng.
\newblock Legendrian contact homology in the boundary of a subcritical
  weinstein 4-manifold.
\newblock {\em Journal of Differential Geometry}, 101(1):67--157, 9 2015.

\bibitem{Eliashberg_&_SFT}
Y.~Eliashberg, A.~Givental, and H.~Hofer.
\newblock Introduction to symplectic field theory.
\newblock {\em Geom. Funct. Anal.}, (Special Volume, Part II):560--673, 2000.
\newblock GAFA 2000 (Tel Aviv, 1999).

\bibitem{Albin}
A.~Eriksson~\"{O}stman.
\newblock Legendrian contact homology with homotopy coefficients.
\newblock In preparation, 2016.

\bibitem{Etnyre_&_Connected_Sums}
J.~B. Etnyre and K.~Honda.
\newblock On connected sums and {L}egendrian knots.
\newblock {\em Adv. Math.}, 179(1):59--74, 2003.

\bibitem{1024.57014}
J.~B. Etnyre, L.~L. Ng, and J.~M. Sabloff.
\newblock {Invariants of Legendrian knots and coherent orientations.}
\newblock {\em J. Symplectic Geom.}, 1(2):321--367, 2002.

\bibitem{DGA}
Y.~F{\'e}lix, S.~Halperin, and J.-C. Thomas.
\newblock {\em Rational homotopy theory}, volume 205 of {\em Graduate Texts in
  Mathematics}.
\newblock Springer-Verlag, New York, 2001.

\bibitem{MorseHomotopy}
K.~Fukaya.
\newblock Morse homotopy, {$A^\infty$}-category, and {F}loer homologies.
\newblock In {\em Proceedings of {GARC} {W}orkshop on {G}eometry and {T}opology
  '93 ({S}eoul, 1993)}, volume~18 of {\em Lecture Notes Ser.}, pages 1--102.
  Seoul Nat. Univ., Seoul, 1993.

\bibitem{fooo}
K.~Fukaya, Y.-G. Oh, H.~Ohta, and K.~Ono.
\newblock {\em Lagrangian intersection {F}loer theory: anomaly and obstruction.
  {P}art {I}}, volume~46 of {\em AMS/IP Studies in Advanced Mathematics}.
\newblock American Mathematical Society, Providence, RI; International Press,
  Somerville, MA, 2009.

\bibitem{kontsevich1995homological}
M.~Kontsevich.
\newblock Homological algebra of {Mirror} {Symmetry}.
\newblock {\em Proceedings of the International Congress of Mathematicians,
  Zurich, Switzerland 1994}, 1:120--139, 1995.

\bibitem{Licata_Sabloff}
J.~E. Licata and J.~M. Sabloff.
\newblock Legendrian contact homology in Seifert fibered spaces.
\newblock {\em Quantum Topology}, 4(3):265--301, 2013.

\bibitem{Mis_grading}
K.~Mishachev.
\newblock The {$N$}-copy of a topologically trivial {L}egendrian knot.
\newblock {\em J. Symplectic Geom.}, 1(4):659--682, 2003.

\bibitem{Satellites}
L.~Ng and D.~Rutherford.
\newblock Satellites of {L}egendrian knots and representations of the
  {C}hekanov-{E}liashberg algebra.
\newblock {\em Algebr. Geom. Topol.}, 13(5):3047--3097, 2013.

\bibitem{NRSSZ}
L.~{Ng}, D.~{Rutherford}, V.~{Shende}, S.~{Sivek}, and E.~{Zaslow}.
\newblock {Augmentations are Sheaves}.
\newblock {P}reprint (2015), available at
  \texttt{http://arxiv.org/abs/1502.04939}.

\bibitem{Ngcomputable}
L.~L. Ng.
\newblock Computable {L}egendrian invariants.
\newblock {\em Topology}, 42(1):55--82, 2003.

\bibitem{RutherfordKauffman}
D.~Rutherford.
\newblock Thurston-{B}ennequin number, {K}auffman polynomial, and ruling
  invariants of a {L}egendrian link: the {F}uchs conjecture and beyond.
\newblock {\em Int. Math. Res. Not.}, pages Art. ID 78591, 15, 2006.

\bibitem{Sabloff_thesis}
J.~M. Sabloff.
\newblock Invariants of Legendrian knots in circle bundles.
\newblock {\em Communications in Contemporary Mathematics}, 5(04):569--627,
  2003.

\bibitem{Seidel_Fukaya}
P.~Seidel.
\newblock {\em Fukaya categories and {P}icard-{L}efschetz theory}.
\newblock Zurich Lectures in Advanced Mathematics. European Mathematical
  Society (EMS), Z\"urich, 2008.

\bibitem{Shink_Vela}
Clayton Shonkwiler and David~Shea Vela-Vick.
\newblock Legendrian contact homology and nondestabilizability.
\newblock {\em J. Symplectic Geom.}, 9(1):33--44, 2011.

\bibitem{TheContHomofLegKNotswithMAXTBI}
S.~Sivek.
\newblock The contact homology of {L}egendrian knots with maximal
  {T}hurston-{B}ennequin invariant.
\newblock {\em J. Symplectic Geom.}, 11(2):167--178, 2013.

\bibitem{HomotopyAssociativity}
J.~D. Stasheff.
\newblock Homotopy associativity of {$H$}-spaces. {I}, {II}.
\newblock {\em Trans. Amer. Math. Soc. 108 (1963), 275-292; ibid.},
  108:293--312, 1963.

\bibitem{KFloer}
M.~G. Sullivan.
\newblock {$K$}-theoretic invariants for {F}loer homology.
\newblock {\em Geom. Funct. Anal.}, 12(4):810--872, 2002.

\end{thebibliography}

\end{document}